%-----------------------------------------------------------------------
% Beginning of amsart.template
%-----------------------------------------------------------------------
%
%     AMS-LaTeX v.2 template for use with amsart
%
%     Remove any commented or uncommented macros you do not use.

\documentclass[a4paper]{amsart}
\usepackage{xypic}
\usepackage[only,mapsfrom]{stmaryrd}
\usepackage{graphicx}

\newtheorem{theorem}{Theorem}[section]
\newtheorem{lemma}[theorem]{Lemma}
\newtheorem{question}[theorem]{Question}
\newtheorem{corollary}[theorem]{Corollary}

\theoremstyle{definition}

\theoremstyle{remark}
\newtheorem{remark}[theorem]{Remark}

%\numberwithin{equation}{section}

\newcommand{\C}{\mathbb{C}}
\newcommand{\N}{\mathbb{N}}
\newcommand{\R}{\mathbb{R}}

\DeclareMathOperator{\kernel}{ker}

\DeclareMathOperator{\codim}{codim}
\DeclareMathOperator{\rank}{rank}
\DeclareMathOperator{\id}{Id}
\DeclareMathOperator{\Diff}{Diff}

\begin{document}
\title[Exotic torus manifolds]{Exotic torus manifolds and equivariant smooth structures on quasitoric manifolds}

\author{Michael Wiemeler}
\address{Max-Planck-Institute for Mathematics, Vivatsgasse 7, D-53111 Bonn, Germany}
%\curraddr{}
\email{wiemeler@mpim-bonn.mpg.de}
\thanks{The research was supported by SNF Grant No. PBFRP2-133466 and a grant from the MPG.}

\subjclass[2010]{57R55, 57S05, 57S15}
\keywords{torus manifolds, exotic smooth structures, fundamental group, diffeomorphism group}

\maketitle

\begin{abstract}
In 2006 Masuda and Suh asked if two compact non-singular toric varieties having isomorphic cohomology rings are homeomorphic.
In the first part of this paper we discuss this question for topological generalizations of toric varieties, so-called torus manifolds.
For example we show that there are homotopy equivalent torus manifolds which are not homeomorphic.
Moreover, we characterize those groups which appear as the fundamental groups of locally standard torus manifolds.

In the second part we give a classification of quasitoric manifolds and certain six-dimensional torus manifolds up to equivariant diffeomorphism.

In the third part we enumerate the number of conjugacy classes of tori in the diffeomorphism group of torus manifolds.
For torus manifolds of dimension greater than six there are always infinitely many conjugacy classes.
We give examples which show that this does not hold for six-dimensional torus manifolds.

\end{abstract}

%-----------------------------------------------------------------------
% End of amsart.template
%-----------------------------------------------------------------------

\section{Introduction}
\label{sec:intro}

In 2006 Masuda and Suh \cite{1160.57032} asked the following:
\begin{question}
 Let \(X_1\), \(X_2\) be two compact non-singular toric varieties with isomorphic cohomology rings. Are \(X_1\) and \(X_2\) homeomorphic?
\end{question}

We discuss this question for topological generalizations of compact non-singular toric varieties, so-called torus manifolds.
A torus manifold is a \(2n\)-dimensional closed connected smooth orientable manifold with an effective smooth action of an \(n\)-dimensional torus \(T\) such that \(M^T\neq\emptyset\).

It is known  that the answer to the question of Masuda and Suh is ``no'' if one allows \(X_1\) and \(X_2\) to be torus manifolds \cite[Example 3.4]{choi_masuda_suh_pre}.

We show that for two torus manifolds \(M_1,M_2\) the following holds:
\begin{itemize}
\item[] \(M_1\),\(M_2\) have isomorphic cohomology
\item[\(\not\Rightarrow\)] \(M_1\),\(M_2\) are homotopy equivalent
\item[\(\not\Rightarrow\)] \(M_1\),\(M_2\) are homeomorphic
\item[\(\not\Rightarrow\)] \(M_1\),\(M_2\) are diffeomorphic. 
\end{itemize}

As a byproduct of this study we show that, for \(n\geq4\), every finitely presented group is the fundamental group of some \(2n\)-dimensional locally standard torus manifold.
For six-dimensional torus manifolds we have the following theorem:

\begin{theorem}[Theorem \ref{sec:fund-groups-torus-5}]
  A group \(G\) is the fundamental group of a six-dimensional locally standard torus manifold if and only if it is the fundamental group of a three-dimensional orientable manifold with boundary.
\end{theorem}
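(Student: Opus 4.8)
The plan is to prove the two implications separately; the forward implication is short, and essentially all of the work lies in the construction for the converse.

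\emph{Necessity.} Let $M^{6}$ be a locally standard torus manifold with acting torus $T=T^{3}$, and set $Q=M/T$. Local standardness makes $Q$ a compact connected manifold with corners of dimension $6-3=3$, hence a compact connected $3$-manifold with boundary, and $M^{T}\neq\emptyset$ gives $Q$ at least one vertex (so $\partial Q\neq\emptyset$). The manifold $Q$ is orientable: deleting from $M$ the union of all non-principal orbits, which has codimension $\ge 2$, leaves an open dense orientable submanifold $M_{0}$ on which $T$ acts freely, with $M_{0}\to Q\setminus\partial Q$ a principal $T^{3}$-bundle; since $T^{3}$ is connected and acts on its fibres by orientation-preserving translations, $w_{1}$ of the total space is pulled back from the base, so $M_{0}$ orientable forces $Q\setminus\partial Q$, hence $Q$, orientable. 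Finally $\pi_{1}(M)\cong\pi_{1}(Q)$: over $Q\setminus\partial Q$ the manifold $M$ is a principal $T^{3}$-bundle, so the fundamental group of that part is a central extension of $\pi_{1}(Q)$ by a quotient of $\mathbb Z^{3}=\pi_{1}(T^{3})$; reattaching the orbits over the open facets kills, in $\pi_{1}$, exactly the classes of the characteristic circles, and since $M^{T}\neq\emptyset$ there is a vertex of $Q$ at which three facets meet with characteristic vectors forming a basis of $\mathbb Z^{3}$, so all of the $\mathbb Z^{3}$-part dies and $\pi_{1}(M)\cong\pi_{1}(Q)$. Thus $\pi_{1}(M)$ is the fundamental group of the compact orientable $3$-manifold with boundary $Q$.

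\emph{Reduction of sufficiency.} Let $N$ be a compact orientable $3$-manifold with boundary; removing an open ball if necessary we may assume $\partial N\neq\emptyset$, which changes neither $\pi_{1}$ nor the assertion. It suffices to produce a compact \emph{nice} $3$-manifold with corners $Q$, with at least one vertex and with $\pi_{1}(Q)\cong\pi_{1}(N)$, carrying a characteristic function $\lambda$ on its facets (a primitive vector of $\mathbb Z^{3}$ per facet such that at every codimension-$k$ face the $k$ relevant vectors span a rank-$k$ direct summand). Indeed, the Davis--Januszkiewicz-type reconstruction $M=(Q\times T^{3})/\!\sim$, using the trivial bundle over $Q\setminus\partial Q$, is then a closed smooth $6$-manifold with a locally standard effective $T^{3}$-action and a fixed point over each vertex of $Q$; it is orientable because its free part $(Q\setminus\partial Q)\times T^{3}$ is and the complement has codimension $\ge 2$. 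So $M$ is a six-dimensional locally standard torus manifold, and by the previous paragraph $\pi_{1}(M)\cong\pi_{1}(Q)\cong\pi_{1}(N)$.

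\emph{Construction of $(Q,\lambda)$, and the main obstacle.} Since $\partial N\neq\emptyset$, choose a handle decomposition of $N$ with handles of index $\le 2$. The union $H$ of the $0$- and $1$-handles is a genus-$g$ handlebody; write $H=\Sigma\times I$ with $\Sigma$ a disc with $g$ holes, and give $\Sigma$ a simple corner structure with bivalent vertices in which each boundary circle is cut into at least three edges. Then $Q_{0}=\Sigma\times I$ is a nice $3$-manifold with corners with trivalent vertices, whose facets are $\Sigma\times\{0\}$, $\Sigma\times\{1\}$, and the strips $e\times I$ over the edges $e$ of $\Sigma$. Put $\lambda(\Sigma\times\{0\})=\lambda(\Sigma\times\{1\})=e_{3}$ and let the remaining values be primitive lifts of a characteristic function $\bar\lambda\colon\{e\}\to\mathbb Z^{2}=\mathbb Z^{3}/\langle e_{3}\rangle$ on the surface-with-corners $\Sigma$ (on each boundary cycle alternate $e_{1},e_{2}$, inserting $e_{1}+e_{2}$ once on an odd cycle); one checks this is a characteristic function on $Q_{0}$. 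It then remains to attach the $2$-handles of $N$ equivariantly: each is glued to $Q$ along an annular neighbourhood of a framed attaching curve in $\partial Q$, realised as a cyclic chain of facets, over which one glues a prism-like block $D^{2}\times D^{1}$ carrying a characteristic function agreeing with $\lambda$ along that annulus, all the while keeping $Q$ nice. I expect this last step to be the crux: one must show that, after isotoping the attaching curves and refining the corner structure of $Q$ near them, the characteristic function extends over each added block --- i.e.\ that every $2$-handle attachment can be made equivariant. Granting this, $Q$ is homeomorphic to $N$ because the handle decomposition has been mirrored, so $\pi_{1}(Q)\cong\pi_{1}(N)$, and the reduction above finishes the proof.
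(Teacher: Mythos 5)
Your necessity argument is essentially the paper's (it is the content of its Lemma~\ref{sec:fund-groups-torus-6}: the orbit map induces $\pi_1(M)\cong\pi_1(M/T)$ because the free part is a $T^3$-bundle over the interior, the complement of the free part has codimension $\ge 2$, and a fixed point forces the principal orbit to be null-homotopic), and that half is fine. The problem is the sufficiency direction, where you yourself flag the decisive step and then assume it: ``one must show that \dots the characteristic function extends over each added block \dots Granting this \dots''. That step is not a routine verification. Extending a characteristic function over a $2$-handle means solving a nontrivial combinatorial/arithmetic problem: the attaching annulus crosses a cyclic chain of facets of $Q_0$ carrying prescribed primitive vectors, the two new disc facets need vectors that form unimodular pairs and triples with every facet and corner they meet along that whole chain, and the attaching curves of different $2$-handles may run through overlapping chains of facets, so the choices interact. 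Existence of characteristic functions on nice manifolds with corners is a genuine obstruction-theoretic issue in general (it already fails for some simple polytopes), so ``after isotoping and refining the corner structure it extends'' needs an actual argument, and without it the construction does not produce the required torus manifold. As written, the proof of the harder implication is incomplete.

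It is worth seeing how the paper sidesteps exactly this difficulty. Given an orientable $3$-manifold $X$ with boundary, it invokes J\"anich's Klassifikationssatz to obtain a five-dimensional special $T^2$-manifold $Y$ with $Y/T^2=X$, sets $Y'=Y\times S^1$, which is a locally standard $T^3$-manifold with orbit space $X$ but no corners at all: the only isotropy data is one circle subgroup per boundary component of $X$, so no characteristic function on a facet decomposition of $\partial X$ is needed. Since $Y'$ has no fixed points, the paper then forms the equivariant connected sum of $Y'$ with $S^6$ along principal orbits; this imports the fixed points (and all the corner structure) from $S^6/T^3\cong D^3$, changes the orbit space only by removing an open ball from the interior of $X$, and hence preserves $\pi_1$. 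If you want to keep your handle-by-handle approach you would need to supply the missing extension lemma; otherwise the cleanest repair is to replace your construction of $(Q,\lambda)$ by an appeal to J\"anich's existence theorem followed by the connected-sum trick.
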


It should be noted that it follows from the classification results of Orlik and Raymond \cite{0287.57017} that the fundamental groups of four-dimensional torus manifolds are free products of cyclic groups. Furthermore, the fundamental groups of four-dimensional locally standard torus manifolds are free groups.

In the second part of this paper we give a classification of quasitoric manifolds and certain six-dimensional torus manifolds up to equivariant diffeomorphism.
One consequence of this classification is the following theorem:

\begin{theorem}[Corollary \ref{sec:six-dimens-torus-2}]
  Simply connected six-dimensional torus manifolds with vanishing odd degree cohomology are equivariantly homeomorphic if and only if they are equivariantly diffeomorphic.
\end{theorem}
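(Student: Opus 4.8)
The statement has a trivial direction: an equivariant diffeomorphism is in particular an equivariant homeomorphism. So suppose $M_1$ and $M_2$ are equivariantly homeomorphic, both simply connected six-dimensional torus manifolds with vanishing odd-degree cohomology, and let $T\cong T^3$ be the acting torus. The plan is to show that each $M_i$ is determined \emph{up to equivariant homeomorphism} by its orbit space viewed as a topological manifold with corners together with its characteristic function, to upgrade this datum to the corresponding smooth datum by three-dimensional smoothing theory, and then to invoke the equivariant diffeomorphism classification of Part~2.

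First I would recall the structure theory (Masuda--Panov): a torus manifold has vanishing odd-degree cohomology exactly when it is locally standard over a homology polytope, i.e.\ a nice manifold with corners all of whose faces are acyclic; when $M_i$ is moreover simply connected, $Q_i=M_i/T$ and all of its faces are simply connected, so $Q_i$ is a contractible (hence, by the Poincar\'e conjecture, ball-like) three-dimensional manifold with corners. Since $H^2(Q_i;\mathbb{Z}^3)=0$ there is no Euler-class freedom, and $M_i$ is equivariantly homeomorphic to the canonical model built from the pair $(Q_i,\lambda_i)$, where $Q_i$ is taken as a \emph{topological} manifold with corners and $\lambda_i$ assigns to each facet the circle subgroup fixing its preimage; this reconstruction is functorial with respect to face-preserving maps.

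Next, the given equivariant homeomorphism $g\colon M_1\to M_2$ preserves $T$-orbits and orbit types, hence descends to a homeomorphism $\bar g\colon Q_1\to Q_2$ carrying the codimension-$k$ stratum to the codimension-$k$ stratum for every $k$; in particular $\bar g$ is face-preserving, and after composing $g$ with an automorphism of $T$ it intertwines $\lambda_1$ with $\lambda_2$. Now I would smooth $\bar g$: by the theorems of Moise (and Munkres for the bounded and cornered versions) every topological $3$-manifold with corners carries a unique smooth structure and any homeomorphism between two of them is isotopic to a diffeomorphism, and keeping track of the corner stratification one gets from $\bar g$ a face-preserving diffeomorphism $h\colon Q_1\to Q_2$ of smooth manifolds with corners inducing the same bijection of faces, hence still intertwining $\lambda_1$ and $\lambda_2$. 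Thus $(Q_1,\lambda_1)$ and $(Q_2,\lambda_2)$ are isomorphic as smooth characteristic pairs, and the classification of Part~2 --- by which the equivariant diffeomorphism type of such a torus manifold is determined by the isomorphism type of its smooth characteristic pair --- produces an equivariant diffeomorphism $M_1\to M_2$.

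The step that needs the most care is the passage from $\bar g$ to $h$: one must smooth a homeomorphism of three-dimensional manifolds \emph{with corners} through face-preserving maps, so that the resulting bijection of facets, and hence its compatibility with the characteristic functions, survives. One should also verify that the equivariant homeomorphism type of $M_i$ carries no invariant beyond the topological characteristic pair --- equivalently, that these spaces admit no ``exotic topological'' $T$-action without a smooth model --- which again follows from local standardness together with the acyclicity of the orbit space, but deserves to be spelled out.
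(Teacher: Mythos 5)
Your proposal is correct in outline and shares the paper's backbone --- both arguments reduce the statement to the classification theorem of Section~6 (the equivariant diffeomorphism type of such an $M$ is determined by the face poset $\mathcal{P}$ of $M/T$ together with the characteristic map $\lambda$) --- but you route the key step differently, and in a way that creates more work than necessary. The paper's Theorem~\ref{sec:class-six-dimens} takes as input only the \emph{combinatorial} data $(\mathcal{P},\lambda)$, which is manifestly an invariant of the equivariant homeomorphism type; the proof of the corollary is then a one-line observation. The diffeomorphism of orbit spaces is not obtained by smoothing the given homeomorphism $\bar g$, but is constructed from scratch inside Lemma~\ref{sec:six-dimens-torus}: all faces of $M/T$ are discs of dimension at most three, these have unique smooth structures, and every diffeomorphism of $S^1$ extends over $D^2$, so an abstract face-poset isomorphism already forces the orbit spaces to be diffeomorphic as manifolds with corners. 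You instead try to smooth $\bar g$ itself through face-preserving maps via Moise--Munkres, which is exactly the step you flag as delicate: smoothing a homeomorphism of a $3$-manifold \emph{with corners} relative to its face stratification is not an off-the-shelf statement, and you would need to supply an argument (essentially an induction over the skeleta of the corner structure) to make it rigorous. That effort buys you nothing here, since the classification theorem never needed a smooth isomorphism of characteristic pairs as input. Your second flagged worry --- that the equivariant homeomorphism type might carry information beyond $(\mathcal{P},\lambda)$ --- also dissolves in the paper's formulation: one does not need to reconstruct $M_i$ from a canonical topological model at all; one only needs that an equivariant homeomorphism induces a face-poset isomorphism compatible with the characteristic maps, which is immediate from preservation of orbit types. (A small additional point: for a strictly equivariant homeomorphism no automorphism of $T$ is needed to intertwine $\lambda_1$ and $\lambda_2$; that correction would only be relevant for weak equivariance.)
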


In the third part of this paper we enumerate conjugacy classes of \(n\)-dimensional tori in the diffeomorphism group of \(2n\)-dimensional torus manifolds.
If \(n\geq4\), then there are always infinitely many conjugacy classes.

The enumeration in the case of four-dimensional simply connected torus manifolds was done by Melvin \cite{0486.57016}.
He showed that there are four-dimensional torus manifolds with only a finite number of conjugacy classes of two-dim\-en\-sio\-nal tori and others with an infinite number.

We also give an enumeration for some six-dimensional torus manifolds.
We show that there are six-dimensional torus manifolds with only a finite number of conjugacy classes and others with an infinite number.
This last result is based on a generalization of a theorem from \cite{wiemeler-remarks} to simply connected six-dimensional torus manifolds whose cohomology is generated in degree two (see Section~\ref{sec:class} for details).

In this article all (co-)homology groups are taken with integer coefficients.

This paper is organized as follows.
In section \ref{sec:fundamental} we describe the fundamental group of torus manifolds.
In section \ref{sec:homotopy_not_homeo} we construct homotopy equivalent but not homeomorphic torus manifolds.
In section \ref{sec:homeo_not_diffeo} we construct homeomorphic but not diffeomorphic torus manifolds.

In section \ref{sec:app-intro} we give a classification of quasitoric manifolds up to equivariant diffeomorphism.
In section \ref{sec:class} we give a classification of six-dimensional torus manifolds with vanishing odd dimensional cohomology.

In section \ref{sec:tori_in_diff} we prove that there are infinitely many non-conjugated tori in the diffeomorphism group of torus manifolds of dimension greater than six.
In section \ref{sec:tori_diff_six} we use the classification from section \ref{sec:class}  to determine the number of conjugacy classes of three-dimensional tori in the diffeomorphism groups of some six-dimensional torus manifolds.

  I would like to thank Dieter Kotschick for drawing my attention to the question which groups are isomorphic to fundamental groups of torus manifolds.
This paper grow out of an attempt to answer this question.
Moreover, I want to thank Stephen Miller and Nigel Ray for discussions on smooth structures on polytopes and quasitoric manifolds.
I would also like to thank Anand Dessai for comments on an earlier version of this paper.

\section{Fundamental groups of torus manifolds}
\label{sec:fundamental}

In this section we construct torus manifolds with non-trivial fundamental group.
Before we do so we introduce the main construction for our counterexamples to the rigidity problem for torus manifolds.
Let \(M\) be a \(2n\)-dimensional torus manifold and \(X\) a closed smooth connected oriented \(n\)-dimensional manifold.

Then let \(\iota_1:D^n\rightarrow X\) be an orientation-reversing embedding of an \(n\)-dimensional disc into \(X\).
Moreover, let \(\iota_2:T^n\times D^n\rightarrow M\) be an orientation-preserving embedding of an equivariant tubular neighbourhood of a principal orbit in \(M\).
Here \(T^n\times D^n\) is oriented in the natural way.
Then we define
\begin{equation*}
  \alpha(M,X)= T^n\times (X-\iota_1(\mathring{D}^n)) \cup_{f}(M -\iota_2(T^n\times \mathring{D}^n)),
\end{equation*}
where \(f=(\id_{T^n}\times \iota_1|_{S^{n-1}})\circ \iota_2|_{T^n\times S^{n-1}}^{-1}\).

It is immediate from the definition that \(\alpha(M,X)\) is a torus manifold of dimension \(2n\).

\begin{lemma}
\label{sec:fund-groups-torus-4}
  The equivariant diffeomorpism type of \(\alpha(M,X)\) does not depend on the choices of \(\iota_1\) and \(\iota_2\).
\end{lemma}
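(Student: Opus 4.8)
The plan is to reduce the independence statement to standard uniqueness results for embeddings of discs and equivariant tubular neighbourhoods, combined with an isotopy extension argument. The key observation is that $\alpha(M,X)$ is built by a cut-and-paste along an embedded copy of $T^n \times S^{n-1}$, and two such constructions will be equivariantly diffeomorphic as soon as the two gluing data can be connected by an ambient isotopy.

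First I would treat the dependence on $\iota_1$. Any two orientation-reversing embeddings $\iota_1, \iota_1' : D^n \to X$ are ambiently isotopic: since $X$ is connected, one can move the centre of one disc to the centre of the other along a path, and then, since both embeddings are orientation-reversing, the two framings of the tangent space at that point lie in the same component of $GL_n(\R)$, so they are related by a path in $GL_n(\R)$; applying the isotopy extension theorem (in the form valid on a closed manifold) yields a diffeotopy $h_t$ of $X$ carrying $\iota_1(D^n)$ to $\iota_1'(D^n)$ with $h_1 \circ \iota_1 = \iota_1'$ up to reparametrisation of the disc. Crossing with $\id_{T^n}$ gives an equivariant diffeotopy of $T^n \times X$, which induces an equivariant diffeomorphism between the two versions of $T^n \times (X - \iota_1(\mathring D^n))$ matching up the boundary identifications; since a diffeomorphism of $D^n$ restricting to something isotopic to the identity on $S^{n-1}$ changes the glued manifold only up to equivariant diffeomorphism, this handles $\iota_1$.

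Next I would treat $\iota_2$. Here I would invoke equivariant uniqueness of tubular neighbourhoods: any two orientation-preserving embeddings $\iota_2, \iota_2' : T^n \times D^n \to M$ of equivariant tubular neighbourhoods of principal orbits are equivariantly ambiently isotopic. This follows because all principal orbits are connected by equivariant isotopies (the orbit space is connected, and one can lift a path in the principal part of the orbit space), the normal bundle of a principal orbit is equivariantly trivial with structure group reducing appropriately, and the equivariant tubular neighbourhood theorem gives uniqueness of the framing up to the relevant group of self-equivalences; orientation-preservation pins down the remaining component. The equivariant isotopy extension theorem then produces an equivariant diffeotopy of $M$ carrying one tubular neighbourhood onto the other, which, as before, induces an equivariant diffeomorphism of the complements $M - \iota_2(T^n \times \mathring D^n)$ compatible with the gluing.

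Finally I would assemble these two pieces: given $(\iota_1,\iota_2)$ and $(\iota_1',\iota_2')$, first use the $\iota_2$-isotopy to reduce to the case $\iota_2 = \iota_2'$, then the $\iota_1$-isotopy to reduce to $\iota_1 = \iota_1'$, taking care that along the way the residual ambiguity is always a diffeomorphism of $T^n \times S^{n-1}$ that is equivariantly isotopic to one of the form $\id_{T^n} \times g$ with $g : S^{n-1} \to S^{n-1}$ isotopic to $\id$, so that the resulting glued manifolds are equivariantly diffeomorphic. I expect the main obstacle to be the equivariant part: making precise that two principal-orbit tubular neighbourhoods are equivariantly ambiently isotopic, and that the equivariant isotopy extension theorem applies in this (non-free, but free near the orbits in question) setting. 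This is where I would either cite a standard reference for equivariant isotopy extension or spell out that the isotopies can be chosen to be supported in the free part of the action where the classical arguments go through $T^n$-equivariantly.
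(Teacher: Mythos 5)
Your proposal is correct and follows essentially the same route as the paper: the disc theorem handles \(\iota_1\), and uniqueness of equivariant tubular neighbourhoods together with an ambient equivariant isotopy moving one principal orbit onto another handles \(\iota_2\). The one load-bearing step you leave implicit is exactly the step the paper singles out: to isotope one principal orbit onto another you need the set of points with trivial isotropy (equivalently, its image in the orbit space) to be connected, and connectedness of \(M\) or of \(M/T\) alone does not give this --- a priori a codimension-one component of some \(M^G\) (necessarily with \(G\cong\mathbb{Z}/2\mathbb{Z}\)) could separate the free part. The paper closes this by showing every \(M^G\) has codimension at least two, using orientability of the orbit \(T/G\) and a result of Bredon; this is where the orientability hypothesis in the definition of a torus manifold actually enters. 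Your parenthetical ``lift a path in the principal part of the orbit space'' can be justified instead by the principal orbit theorem together with the observation that an effective torus action has trivial principal isotropy, but one or the other argument must be made explicit for the proof to be complete.
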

\begin{proof}
  Let \(\iota_1':D^n\rightarrow X\) be another orientation-reversing  embedding of the \(n\)-dimensional disc into \(X\).
Then it follows from Corollary 3.6 of \cite[p. 52]{0767.57001} that there is a diffeomorphism \(f:X\rightarrow X\) such that \(f\circ \iota_1=\iota_1'\).
Therefore the equivariant diffeomorphism type of \(\alpha(M,X)\) does not depend on \(\iota_1\).

Before we prove that the equivariant diffeomorphism type of \(\alpha(M,X)\) is independent of \(\iota_2\), we show that \(M-\bigcup_{\{e\}\neq G \subset T} M^G\) is connected. Here the union is taken over all non-trivial subgroups \(G\) of \(T\).

This is the case if, for all \(\{e\}\neq G\subset T\), \(M^G\) has at least codimension two.
Assume that there is a component of \(M^G\) with codimension one.
Then \(G\) is isomorphic to \(\mathbb{Z}/2\mathbb{Z}\).
Moreover, the orbit of a generic point in this component is equivariantly diffeomorphic to \(T/G\).
Since \(T/G\) is orientable, it follows from Proposition 3.11 of \cite[p. 185]{0246.57017} that \(\codim M^G>1\).
Therefore \(M-\bigcup_{\{e\}\neq G \subset T} M^G\) is connected.

Now assume that \(\iota_2':T^n\times D^n\rightarrow\) is another equivariant tubular neighbourhood of a principal orbit in \(M\).
Then it follows from Corollary 2.4 of \cite[p. 47]{0449.57009} and because equivariant tubular neighbourhoods of a closed invariant submanifold of a \(T\)-manifold are  equivariantly isotopic, that there is an equivariant diffeomorphism \(f:M\rightarrow M\) such that \(f\circ \iota_2=\iota_2'\).
Therefore the equivariant diffeomorphism type of \(\alpha(M,X)\) is independent of \(\iota_2\). 
\end{proof}

It follows easily from Lemma~\ref{sec:fund-groups-torus-4} that \(\alpha(M,S^n)\) is equivariantly diffeomorphic to \(M\).

A torus manifold \(M\) is called locally standard if each orbit in \(M\) has an invariant open neighborhood which is weakly equivariantly diffeomorphic to an open invariant subset of an \(n\)-dimensional complex linear faithful representation of \(T\).
The isotropy groups of points in a locally standard torus manifold are connected.
Moreover, the orbit space of a locally standard torus manifold is naturally a manifold with corners.

It follows immediately from the definition that \(\alpha(M,X)\) is locally standard if and only if \(M\) is locally standard. 

Now we prove the following theorems.

\begin{theorem}
\label{sec:fund-groups-torus}
  Let \(G\) be a group which is the fundamental group of a smooth orientable \(n\)-dimensional manifold, \(n\geq 3\). Then \(G\) is the fundamental group of a \(2n\)-dimensional locally standard torus manifold.
\end{theorem}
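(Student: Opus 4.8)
\emph{Proof strategy.} The plan is to realise \(G\) as the fundamental group of the orbit space of a suitable locally standard torus manifold, built from the construction \(\alpha\) together with a variant of it over manifolds with corners. First I would dispose of the case where \(G=\pi_1(X)\) for a \emph{closed} orientable \(n\)-manifold \(X\): set \(N:=\alpha(\mathbb{CP}^n,X)\), with \(\mathbb{CP}^n\) carrying its standard (simply connected, locally standard) \(T^n\)-action. Applying van Kampen to the two pieces in the definition of \(\alpha\) gives
\[
  \pi_1(N)=\pi_1\bigl(T^n\times(X-\iota_1(\mathring{D}^n))\bigr)*_{\pi_1(T^n\times S^{n-1})}\pi_1\bigl(\mathbb{CP}^n-\iota_2(T^n\times\mathring{D}^n)\bigr),
\]
and, since \(n\geq 3\): deleting an open disc from \(X\) leaves \(\pi_1\) unchanged, so the first factor is \(\mathbb{Z}^n\times G\); \(\pi_1(T^n\times S^{n-1})=\mathbb{Z}^n\); and \(\mathbb{CP}^n\) minus an open invariant tube about a principal orbit retracts onto \(\mathbb{CP}^n\) minus that orbit, which has codimension \(n\geq 3\), hence is simply connected. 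The gluing identifies the amalgamating \(\mathbb{Z}^n\) with the \(T^n\)-factor, which is thereby killed, so \(\pi_1(N)=G\); as \(N\) is a \(2n\)-dimensional locally standard torus manifold, this settles the closed case.

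For \(n\geq 4\) this is already enough, because every finitely presented group — in particular \(\pi_1\) of any compact \(n\)-manifold — is \(\pi_1\) of a closed orientable smooth \(n\)-manifold (perform surgery on embedded circles in a connected sum of copies of \(S^1\times S^{n-1}\) according to a presentation of \(G\)), so one may always reduce to a closed \(X\). The real work is the case \(n=3\) (more generally, an \(n\)-manifold with boundary): there \(\pi_1(Y)\) need not be a closed \(n\)-manifold group — already \(\mathbb{Z}^2=\pi_1(T^2\times I)\) — and \(\alpha\) only ever produces free products of closed \(n\)-manifold groups, so a genuinely different construction is needed.

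Here I would proceed as follows. Take \(Y\) compact, connected, orientable with \(\pi_1(Y)=G\) and \(\partial Y\neq\emptyset\) (if \(Y\) is closed, remove an open ball, which does not change \(\pi_1\) since \(n\geq 3\); if the given manifold is noncompact, pass to a compact core). Put \(Q:=Y\,\natural\,\Delta^n\), the boundary connected sum of \(Y\) with the \(n\)-simplex along a smooth half-ball in \(\partial Y\) and one in the relative interior of a facet of \(\Delta^n\). Then \(Q\) is a compact connected orientable manifold with corners, \(\pi_1(Q)\cong G\), and \(Q\) carries the \(n+1\) vertices of \(\Delta^n\). Choose a characteristic function \(\lambda\) on the facets of \(Q\): the \(\mathbb{CP}^n\)-values \(e_1,\dots,e_n,-(e_1+\dots+e_n)\) on the facets coming from \(\Delta^n\), and an arbitrary primitive vector on every other facet; this needs no extra compatibility because each vertex of \(Q\) lies only on facets coming from \(\Delta^n\). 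The torus manifold is then \(M:=(T^n\times Q)/{\sim_\lambda}\), a closed connected orientable \(2n\)-manifold with an effective locally standard \(T^n\)-action and \(M^T\neq\emptyset\) (from the vertices); for \(\partial Y=\emptyset\) this is just \(\alpha(\mathbb{CP}^n,Y)\) again.

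Finally one checks \(\pi_1(M)=\pi_1(Q)=G\): over \(\interior Q\) the quotient is the trivial bundle \(T^n\times\interior Q\), whose complement in \(M\) has codimension \(\geq 2\), so the inclusion gives a surjection \(\mathbb{Z}^n\times\pi_1(Q)\twoheadrightarrow\pi_1(M)\); collapsing the circle \(S^1_{\lambda(F)}\) over each facet \(F\) kills the central element \(\lambda(F)\in\mathbb{Z}^n\), and since the characteristic vectors at a vertex form a \(\mathbb{Z}\)-basis of \(\mathbb{Z}^n\), the entire \(\mathbb{Z}^n\)-factor dies and \(\pi_1(M)=\pi_1(Q)=G\). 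I expect the main obstacle to be verifying that \(M=(T^n\times Q)/{\sim_\lambda}\) really is a smooth closed manifold with a locally standard action — at a codimension-\(k\) face the characteristic vectors must span a rank-\(k\) direct summand of \(\mathbb{Z}^n\); the faces inside \(\Delta^n\) satisfy this by construction, and the facets coming from \(\partial Y\) are pairwise disjoint and meet no face in positive codimension, which is precisely the reason for grafting on a simplex — together with checking orientability of \(M\) and making the reduction to a compact \(Y\) precise.
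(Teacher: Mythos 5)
Your first paragraph is exactly the paper's proof: take a simply connected locally standard \(2n\)-dimensional torus manifold (the paper leaves \(M\) general where you specialize to \(\C P^n\), which changes nothing), form \(\alpha(M,X)\), and run Seifert--van Kampen to get \((\mathbb{Z}^n\times G)*_{\mathbb{Z}^n}1=G\). The rest of your proposal goes beyond the paper. The paper reads the hypothesis as ``closed'' (the construction \(\alpha(M,X)\) is only defined for closed \(X\), and the corollary drawn from the theorem is the standard fact about closed \(n\)-manifolds for \(n\geq 4\)), so it never confronts the bounded case you worry about; your observation that for \(n=3\) groups such as \(\mathbb{Z}^2=\pi_1(T^2\times I)\) are not closed \(3\)-manifold groups and are not reachable by \(\alpha\) is correct and is precisely why the paper proves a separate, sharper statement in dimension six (Theorem~\ref{sec:fund-groups-torus-5}), there using J\"anich's classification of special \(T^2\)-manifolds rather than your explicit Davis--Januszkiewicz-type quotient \((T^n\times Q)/\!\sim_\lambda\) over \(Q=Y\,\natural\,\Delta^n\). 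Your quotient construction is a sound alternative route to that stronger conclusion (and works in all dimensions \(n\geq 3\)), with one point to tighten: killing the characteristic circles only shows that \(\mathbb{Z}^n\) dies, i.e.\ that \(G\twoheadrightarrow\pi_1(M)\); to get injectivity you should compose with the map \(\pi_1(M)\to\pi_1(M/T)=\pi_1(Q)=G\) induced by the orbit map, which splits the surjection --- this is exactly the content of the paper's Lemma~\ref{sec:fund-groups-torus-6}. In short: for the theorem as the paper proves it, your argument coincides with the paper's; your additional construction recovers (and generalizes to all \(n\geq3\)) what the paper only establishes for \(n=3\) by a different method.
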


Since, for \(n\geq 4\), every finite presentable group is the fundamental group of some orientable \(n\)-dimensional manifold we get:

\begin{corollary}
\label{sec:fund-groups-torus-1}
  For \(2n\geq 8\), every finite presentable group is the fundamental group of some \(2n\)-dimensional locally standard torus manifold.
\end{corollary}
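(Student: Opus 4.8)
The plan is to deduce the corollary immediately from Theorem~\ref{sec:fund-groups-torus}, once we establish the classical fact hinted at just above the statement: for every \(n\geq 4\) and every finitely presented group \(G\) there is a closed smooth connected orientable \(n\)-manifold \(X\) with \(\pi_1(X)\cong G\). Given such an \(X\), Theorem~\ref{sec:fund-groups-torus} (which applies since \(n\geq 3\)) produces a \(2n\)-dimensional locally standard torus manifold with fundamental group \(G\), and the constraint \(n\geq 4\) is exactly \(2n\geq 8\). So essentially all of the work lies in constructing \(X\).

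For that I would use the standard surgery construction. Fix a finite presentation \(G=\langle x_1,\dots,x_k\mid r_1,\dots,r_\ell\rangle\). First form the connected sum \(N=\#^{k}(S^1\times S^{n-1})\), a closed orientable smooth \(n\)-manifold whose fundamental group is, by the van Kampen theorem, free on \(x_1,\dots,x_k\). Next represent each relator \(r_j\in\pi_1(N)\) by a smooth loop in \(N\); since \(n\geq 3\), general position turns these loops into pairwise disjoint embedded circles \(C_1,\dots,C_\ell\), each with trivial normal bundle because \(N\) is orientable. Choosing framings, perform surgery along each \(C_j\), i.e.\ replace a tubular neighbourhood \(S^1\times D^{n-1}\) of \(C_j\) by \(D^2\times S^{n-2}\); the result is the closed smooth manifold \(X\).

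The point that needs care — rather than a genuine obstacle — is to confirm that this works in the range \(n\geq 4\): the framings must be chosen so that each surgery preserves orientation, and one must check, again by van Kampen, that surgery along \(C_j\) kills the normal subgroup generated by \([C_j]=r_j\) while introducing no new \(\pi_1\)-generators. The latter holds precisely because \(D^2\times S^{n-2}\) is simply connected when \(n-2\geq 2\); in dimension \(3\) the glued-in piece \(D^2\times S^{1}\) is not simply connected, which is exactly why this argument, and hence the bound \(2n\geq 8\), does not extend downward. Granting these checks one gets \(\pi_1(X)\cong G\), and applying Theorem~\ref{sec:fund-groups-torus} to \(X\) finishes the proof.
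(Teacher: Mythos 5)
Your proposal is correct and follows exactly the paper's route: the corollary is deduced immediately from Theorem~\ref{sec:fund-groups-torus} together with the classical fact that every finitely presented group is the fundamental group of a closed orientable \(n\)-manifold for \(n\geq 4\), which the paper simply cites and you prove by the standard surgery argument. The extra detail you supply (connected sum of copies of \(S^1\times S^{n-1}\), then surgery on embedded circles representing the relators) is the usual and correct justification of that fact, including the reason the bound \(n\geq 4\) is needed.
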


\begin{theorem}
\label{sec:fund-groups-torus-2}
  Let \(M\) be a torus manifold of dimension \(2n\), \(n\geq 3\), and \(X\) a homology sphere of dimension \(n\). 
  Then \(\pi_1(\alpha(M,X))=\pi_1(X)*\pi_1(M)\) and there is an equivariant map \(f:\alpha(M,X) \rightarrow M\) which induces an isomorphism in homology. 
\end{theorem}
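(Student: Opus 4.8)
The plan is to compute \(\pi_1(\alpha(M,X))\) by the Seifert--van Kampen theorem applied to the decomposition \(\alpha(M,X)=P_1\cup_{T^n\times S^{n-1}}P_2\) defining it, with \(P_1=T^n\times(X-\iota_1(\mathring{D}^n))\) and \(P_2=M-\iota_2(T^n\times\mathring{D}^n)\), and then to write down an explicit equivariant collapsing map onto \(M\). The hypothesis \(n\geq 3\) is used throughout to make \(S^{n-1}\) simply connected.

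I would first identify the fundamental groups of the three pieces. Since \(S^{n-1}\) is simply connected, \(\pi_1(T^n\times S^{n-1})=\mathbb{Z}^n\) and \(\pi_1(P_1)=\mathbb{Z}^n\times\pi_1(X-\iota_1(\mathring{D}^n))\); moreover van Kampen for \(X=(X-\iota_1(\mathring{D}^n))\cup_{S^{n-1}}\iota_1(D^n)\) gives \(\pi_1(X-\iota_1(\mathring{D}^n))=\pi_1(X)\), so \(\pi_1(P_1)=\mathbb{Z}^n\times\pi_1(X)\). For \(P_2\), van Kampen for \(M=P_2\cup_{T^n\times S^{n-1}}(T^n\times D^n)\) expresses \(\pi_1(M)\) as the pushout of \(\pi_1(P_2)\leftarrow\mathbb{Z}^n\to\mathbb{Z}^n\) whose right arrow (induced by \(T^n\times S^{n-1}\hookrightarrow T^n\times D^n\)) is an isomorphism; hence the inclusion \(P_2\hookrightarrow M\) induces an isomorphism on \(\pi_1\). (That \(P_2\) is connected is shown in the proof of Lemma~\ref{sec:fund-groups-torus-4}.) Putting this together, van Kampen for \(\alpha(M,X)\) gives
\[
\pi_1(\alpha(M,X))=\bigl(\mathbb{Z}^n\times\pi_1(X)\bigr)*_{\mathbb{Z}^n}\pi_1(M),
\]
where \(\mathbb{Z}^n\hookrightarrow\mathbb{Z}^n\times\pi_1(X)\) is the first-factor inclusion and \(\mathbb{Z}^n\to\pi_1(M)\) is the map induced by the inclusion of the principal orbit \(\iota_2(T^n\times\{*\})\).

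The crucial point is that this last map is trivial, and this is where the torus-manifold hypothesis \(M^T\neq\emptyset\) is essential. In a torus manifold the fixed points are isolated and the tangential \(T\)-representation at a fixed point is the standard representation of \(T^n\) on \(\C^n\): otherwise a positive-dimensional fixed component, or a weight system not spanning the character lattice, would make a nontrivial subtorus act trivially on an open set and hence, by connectedness of \(M\), on all of \(M\), contradicting effectiveness. Thus a fixed point has an invariant chart equivariantly diffeomorphic to an invariant ball in \(\C^n\); this chart contains free orbits, and any such orbit is null-homotopic in the ball, hence in \(M\). By Lemma~\ref{sec:fund-groups-torus-4} we may take \(\iota_2\) to be an equivariant tubular neighbourhood of such an orbit, so \(\mathbb{Z}^n\to\pi_1(M)\) is trivial. (Equivalently, all principal orbits are fibres of the principal \(T^n\)-bundle \(M_{\mathrm{free}}\to M_{\mathrm{free}}/T^n\), whose base is connected by the proof of Lemma~\ref{sec:fund-groups-torus-4}, so their \(\pi_1\)-images are conjugate, hence all trivial once one is.) Consequently the amalgam above collapses to \(\pi_1(X)*\pi_1(M)\), using the elementary fact that the pushout of \(\mathbb{Z}^n\times B\leftarrow\mathbb{Z}^n\to C\) with trivial right arrow is \(B*C\).

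For the second assertion I would define \(f\colon\alpha(M,X)\to M\) to be the inclusion on \(P_2\) and \(\iota_2\circ(\id_{T^n}\times\rho)\) on \(P_1\), where \(\rho\colon X-\iota_1(\mathring{D}^n)\to D^n\) is any map restricting to \(\iota_1^{-1}\) on the boundary sphere, obtained by collapsing \(X-\iota_1(\mathring{D}^n)\) onto \(D^n\) along a boundary collar. The two formulas agree on \(T^n\times S^{n-1}\) by the definition of the gluing map of \(\alpha(M,X)\), and \(f\) is equivariant because \(\iota_2\) is. To see that \(f_*\) is an isomorphism, compare the Mayer--Vietoris sequence of \(\alpha(M,X)=P_1\cup P_2\) with that of \(M=\iota_2(T^n\times D^n)\cup(M-\iota_2(T^n\times\mathring{D}^n))\): over the intersection \(f\) is the homeomorphism \(\iota_2|_{T^n\times S^{n-1}}\), over \(P_2\) it is the identity, and over \(P_1\) it is \(\iota_2\circ(\id_{T^n}\times\rho)\). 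The homology-sphere hypothesis enters exactly here: a Mayer--Vietoris argument (or Lefschetz duality) shows \(X-\iota_1(\mathring{D}^n)\) is acyclic, so by the K\"unneth formula \((\id_{T^n}\times\rho)_*\) is the isomorphism \(H_*(T^n)\otimes H_*(X-\iota_1(\mathring{D}^n))\to H_*(T^n)\otimes H_*(D^n)\). The five lemma then gives that \(f_*\) is an isomorphism. The main obstacle in the whole argument is the triviality of the principal-orbit map \(\pi_1(T^n)\to\pi_1(M)\); the rest is routine van Kampen and Mayer--Vietoris bookkeeping.
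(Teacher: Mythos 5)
Your proof is correct and follows essentially the same route as the paper: the fundamental group is computed by Seifert--van Kampen on the defining decomposition, with the key point (as in the paper) that the inclusion of a principal orbit into \(M\) is null-homotopic because \(M^T\neq\emptyset\). For the homology statement the paper invokes its Lemma~\ref{sec:fund-groups-torus-3} applied to the degree-one map \(X\to S^n\) together with \(\alpha(M,S^n)\cong M\); your explicit collapse map \(\iota_2\circ(\id_{T^n}\times\rho)\) is exactly the map that construction produces in this special case, and your Mayer--Vietoris verification is an equivalent bookkeeping, so this is the same argument made self-contained rather than a genuinely different one.
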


\begin{remark}
  It follows from Theorem~\ref{sec:fund-groups-torus-2} that there is no algorithm which can decide if two torus manifolds of dimension \(2n \geq 10\) are homotopy equivalent \cite[p. 169-171]{0311.57001}.
\end{remark}

\begin{proof}[Proof of Theorem 2.2]
  Let \(X\) be a smooth orientable \(n\)-manifold, \(n\geq 3\), with fundamental group \(\pi_1(X)=G\) and \(M\) a simply connected torus manifold of dimension \(2n\).
Moreover, let \(\iota_1,\iota_2\) as in the definition of \(\alpha(M,X)\).

Since the maximal orbits in \(M\) have codimension greater than two, we have
\begin{equation*}
  \pi_1(M-\iota_2(T^n\times \mathring{D}^n))=\pi_1(M)=0.
\end{equation*}
Similarly one sees
\begin{equation*}
  \pi_1(X-\iota_1(\mathring{D}^n))=\pi_1(X)=G.
\end{equation*}
Then we have by Seifert-van Kampen's theorem
\begin{equation*}
  \pi_1(\alpha(M,X))=(\mathbb{Z}^n\times G) *_{\mathbb{Z}^n}\pi_1(M)=G.
\end{equation*}
If we take \(M\) in the above construction to be locally standard, then \(\alpha(M,X)\) is locally standard.
Therefore the statement follows.
\end{proof}

For the proof of Theorem~\ref{sec:fund-groups-torus-2} we need the following lemma.

\begin{lemma}
\label{sec:fund-groups-torus-3}
  Let \(X,X'\) be two oriented \(n\)-dimensional manifolds and \(M\) a \(2n\)-dimensional torus manifold.
  If there is an orientation preserving map \(f:X\rightarrow X'\) which induces an isomorphism in homology, then there is an equivariant map \(F:\alpha(M,X)\rightarrow \alpha(M,X')\) which induces an isomorphism in homology. 
\end{lemma}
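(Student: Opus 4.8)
The plan is to build $F$ by gluing together, along the common piece $M - \iota_2(T^n \times \mathring D^n)$, the identity on that piece with the map $\id_{T^n} \times g$ on the other piece, where $g\colon X - \iota_1(\mathring D^n) \to X' - \iota_1'(\mathring D^n)$ is a suitable modification of $f$. First I would use Lemma~\ref{sec:fund-groups-torus-4}: since the equivariant diffeomorphism type of $\alpha(M,X)$ and $\alpha(M,X')$ does not depend on the choices of embeddings, I am free to choose $\iota_1,\iota_1'$ compatibly. Concretely, pick any orientation-reversing embedding $\iota_1\colon D^n \to X$; by transversality and the fact that $f$ is orientation preserving and a homology isomorphism (hence degree one), I can homotope $f$ so that $f^{-1}(\iota_1'(\mathring D^n))$, for a chosen orientation-reversing embedding $\iota_1'$ of a small disc in $X'$, is exactly $\iota_1(\mathring D^n)$ and $f\circ\iota_1 = \iota_1'$ on $D^n$. (Here one uses that $\iota_1$ is orientation-reversing into an orientation-preserving-ambient setup precisely so the disc the map ``collapses onto'' matches orientations; this is the same bookkeeping already used in the definition of $\alpha$.) Then $g := f|_{X-\iota_1(\mathring D^n)}$ is a map of compact manifolds with boundary $S^{n-1}$, restricting to $\id_{S^{n-1}}$ (after identifying boundaries via $\iota_1,\iota_1'$) on the boundary.

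Next I would assemble $F$. Define $F = (\id_{T^n}\times g) \cup \id_{M - \iota_2(T^n\times\mathring D^n)}$. The two pieces agree on the overlap $T^n \times S^{n-1}$ because the gluing map $f$ in the construction of $\alpha$ is $(\id_{T^n}\times \iota_1|_{S^{n-1}})\circ \iota_2|^{-1}$ and $\id_{T^n}\times g$ restricted to $T^n\times S^{n-1}$ is $\id_{T^n}\times(\iota_1'|_{S^{n-1}}\circ\iota_1|_{S^{n-1}}^{-1})$ by the previous paragraph, so the square commutes; hence $F$ is well defined and continuous (smooth if $g$ is smooth, which I can arrange). It is equivariant because $T^n$ acts only on the $T^n$-factor on the first piece and trivially on nothing-new on the second, and $\id_{T^n}\times g$ is visibly equivariant while $\id$ on the $M$-piece is equivariant.

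Finally I would check $F$ induces an isomorphism in homology via Mayer–Vietoris. Decompose both $\alpha(M,X)$ and $\alpha(M,X')$ along their respective overlap (thickened) $T^n\times S^{n-1}$, with pieces $P_1 = T^n\times(X-\iota_1(\mathring D^n))$, $P_2 = M - \iota_2(T^n\times\mathring D^n)$, and primed analogues. The map $F$ restricts to $\id$ on $P_2$, to $\id$ on the overlap, and to $\id_{T^n}\times g$ on $P_1$; by the Künneth theorem and the hypothesis that $g$ (equivalently $f$, up to the excised disc, which affects homology only in the top degree in a controlled way) induces a homology isomorphism $H_*(X-\iota_1(\mathring D^n)) \cong H_*(X'-\iota_1'(\mathring D^n))$, the induced map on $H_*(P_1)$ is an isomorphism. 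A five-lemma argument on the comparison of the two Mayer–Vietoris sequences then gives that $F_*$ is an isomorphism. The main obstacle I anticipate is the first step: arranging $f$ (up to homotopy) to carry a chosen disc in $X$ diffeomorphically onto a chosen disc in $X'$ while remaining a homology isomorphism on the complements — one must be careful that a degree-one homology-equivalence need not be homotopic to something with connected, single-disc preimage in general, but since we only need the restriction to the disc-complement to be a \emph{homology} isomorphism (not a homotopy equivalence), a standard argument collapsing $f^{-1}(\text{small disc})$ and using excision suffices, and the orientation hypothesis is exactly what makes the degree come out to $1$ so that no homology is lost on the complement.
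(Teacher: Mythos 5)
Your proposal follows essentially the same route as the paper: modify $f$ so that it carries a disc in $X$ diffeomorphically onto a disc in $X'$ and the complement into the complement, invoke Lemma~\ref{sec:fund-groups-torus-4} to use that image disc in the construction of $\alpha(M,X')$, glue $\id_{T^n}\times f$ on the $T^n\times(X-\iota_1(\mathring{D}^n))$ piece with the identity on $M-\iota_2(T^n\times\mathring{D}^n)$, and finish with a five-lemma comparison (you use Mayer--Vietoris and K\"unneth where the paper uses the long exact sequences of the pair $(M',A)$ together with excision and the pair $(T^n\times X,T^n\times D^n)$; this difference is immaterial).

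The one place where your write-up is thinner than it should be is the step you yourself flag as the main obstacle. ``Transversality'' does not by itself let you homotope $f$ so that the preimage of a small disc is a single disc mapped diffeomorphically: transversality only gives that the preimage of a regular value is a finite set of signed points with signs summing to the degree, and reducing this to a single positive point requires the cancellation argument for oppositely signed preimages. This is precisely Hopf's theorem on degree-one maps, which is what the paper cites at this point; since $f$ is orientation preserving and a homology isomorphism it has degree one, and Hopf's theorem then yields a disc $\iota_1:D^n\hookrightarrow X$ with $f\circ\iota_1$ an embedding and $f(X-\iota_1(D^n))\subset X'-f(\iota_1(\mathring{D}^n))$. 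Your proposed fallback (``collapsing $f^{-1}(\text{small disc})$ and using excision'') would recover the homological statement but does not produce the actual map $g$ on the complement restricting to the prescribed diffeomorphism of the boundary sphere, which you need in order to glue $F$; so you should invoke Hopf's theorem rather than try to avoid it. With that reference supplied, your argument is complete and agrees with the paper's.
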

\begin{proof}
  Since \(f\) has degree one, we may assume by a result of Hopf \cite[Theorem 4.1, p. 376]{0148.43103} that there is a disc \(\iota_1:D^n\hookrightarrow X\) such that \(f\circ \iota_1\) is an embedding and \(X-\iota_1(D^n)\) is mapped by \(f\) to \(X'-f(\iota_1(D^n))\).

By Lemma~\ref{sec:fund-groups-torus-4}, we may use the embedding \(f\circ \iota_1\) to construct \(\alpha(M,X')\).
Therefore \(\id\times f|_{T^n\times(X-\iota_1(D^n))}\) extends to a map \(F:\alpha(M,X)\rightarrow \alpha(M,X')\) which is the identity on \(M-\iota_2(T^n\times \mathring{D}^n)\). 
Let \(A=M-\iota_2(T^n\times \mathring{D}^n)\), \(M'=\alpha(M,X)\) and \(M''=\alpha(M,X')\).
 Then we have the following commutative diagram with exact rows.

\begin{equation*}
  \xymatrix{
    H_{i+1}(M',A)\ar[r]\ar[d]^{F_*}&H_i(A)\ar[d]^{\id}\ar[r]&H_i(M')\ar[r]\ar[d]^{F_*}&H_i(M',A)\ar[d]^{F_*}\ar[r]&H_{i-1}(A)\ar[d]^{\id}\\
    H_{i+1}(M'',A)\ar[r]&H_i(A)\ar[r]&H_i(M'')\ar[r]&H_i(M',A)\ar[r]&H_{i-1}(A)
}
\end{equation*}

Therefore, by the five lemma, \(F_*:H_*(M')\rightarrow H_*(M'')\) is an isomorphism if and only if \(F_*:H_*(M',A)\rightarrow H_*(M'',A)\) is an isomorphism.

By excision we have the following commutative diagram with all horizontal maps isomorphisms
\begin{equation*}
\xymatrix{
  H_i(T^n\times X, T^n\times D^n)\ar[d]^{(\id\times f)_*}&H_i(T^n\times \tilde{X},T^n\times S^{n-1})\ar[l]\ar[r]\ar[d]^{(\id\times f)_*}&H_i(M',A)\ar[d]^{F_*}\\
  H_i(T^n\times X', T^n\times D^n)&H_i(T^n\times \tilde{X}' ,T^n\times S^{n-1})\ar[l]\ar[r]&H_i(M'',A)
}
\end{equation*}
where \(\tilde{X}=X-\iota_1(\mathring{D}^n)\) and \(\tilde{X}'=X'-f(\iota_1(\mathring{D}^n))\).

Therefore \(F_*:H_i(\alpha(M,X),A)\rightarrow H_i(\alpha(M,X'),A)\) is an isomorphism if and only if \((\id\times f)_*:  H_i(T^n\times X, T^n\times D^n)\rightarrow  H_i(T^n\times X', T^n\times D^n)\) is an isomorphism.
But this follows from the exact homology sequence of the pair \((T^n\times X, T^n\times D^n)\) and the five-lemma.

This proves that \(F\) induces an isomorphism in homology.
\end{proof}

\begin{proof}[Proof of Theorem 2.4]
  There is an orientation preserving map \(X\rightarrow S^n\) which induces an isomorphism in homology.
  Therefore, by Lemma~\ref{sec:fund-groups-torus-3}, there is an equivariant map
  \(F:\alpha(M,X)\rightarrow \alpha(M,S^n)=M\) which induces an isomorphism in homology.

  Because there are \(T\)-fixed points in \(M\), the inclusion of a principal orbit in \(M\) is null-homotopic.
  Hence, the statement about the fundamental group follows as in the proof of Theorem~\ref{sec:fund-groups-torus}.
\end{proof}

As a step towards a description of those groups which are fundamental groups of six-dimensional torus manifolds, we discuss the fundamental groups of locally standard torus manifolds.

\begin{lemma}
\label{sec:fund-groups-torus-6}
  Let \(M\) be a locally standard torus manifold with orbit space \(X=M/T\).
  Then the orbit map \(M\rightarrow X\) induces an isomorphism \(\pi_1(M)\rightarrow \pi_1(X)\).
\end{lemma}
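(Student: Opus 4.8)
The plan is to push the whole question onto the \emph{free part} of the action, where the orbit map is an honest principal $T$-bundle whose homotopy exact sequence is available, and then to use the hypothesis $M^{T}\neq\emptyset$ to kill the fiber direction.

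Concretely, write $\pi\colon M\to X$ for the orbit map, let $X_{0}=X\setminus\partial X$ be the interior of the manifold with corners $X$, and set $M_{0}=\pi^{-1}(X_{0})$. Since the isotropy groups of a locally standard torus manifold are connected, no characteristic submanifold has codimension one, so $\pi^{-1}(\partial X)=\bigcup_{i}M_{i}$ is a finite union of closed submanifolds of codimension $\ge 2$ (the codimension estimate is the one already used in the proof of Lemma~\ref{sec:fund-groups-torus-4}); in particular $M_{0}$ is connected and $X_{0}=M_{0}/T$ is connected, and on $M_{0}$ the $T$-action is free, so $\pi|_{M_{0}}\colon M_{0}\to X_{0}$ is a principal $T$-bundle. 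I would then collect four inputs:
\begin{enumerate}
\item[(A)] $X_{0}\hookrightarrow X$ is a homotopy equivalence (push $X$ off its boundary using a collar), hence $\pi_{1}(X_{0})\xrightarrow{\ \cong\ }\pi_{1}(X)$;
\item[(B)] $\pi_{1}(M_{0})\to\pi_{1}(M)$ is surjective, since a generic based loop in $M$ can be made transverse to, hence disjoint from, the codimension-$\ge 2$ set $\bigcup_{i}M_{i}$, producing a homotopic loop in $M_{0}$;
\item[(C)] the fibration $T\to M_{0}\to X_{0}$ gives the exact sequence $\pi_{1}(T)\to\pi_{1}(M_{0})\to\pi_{1}(X_{0})\to 1$, so $\ker\bigl(\pi_{1}(M_{0})\to\pi_{1}(X_{0})\bigr)$ is the image of $\pi_{1}(T)$;
\item[(D)] the image of $\pi_{1}(T)\to\pi_{1}(M_{0})\to\pi_{1}(M)$ is trivial, because the inclusion of a principal orbit into $M$ is null-homotopic: slide the orbit to a $T$-fixed point, exactly as in the proof of Theorem~\ref{sec:fund-groups-torus-2}.
\end{enumerate}

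With these in hand the conclusion is a short diagram chase in the commutative square relating $\pi_{1}(M_{0}),\pi_{1}(M),\pi_{1}(X_{0}),\pi_{1}(X)$. For surjectivity of $\pi_{*}\colon\pi_{1}(M)\to\pi_{1}(X)$: the image of $\pi_{*}$ contains the image of $\pi_{1}(M_{0})\to\pi_{1}(M)\to\pi_{1}(X)$, which by (C) and (A) is all of $\pi_{1}(X)$. For injectivity: if $\alpha\in\ker\pi_{*}$, lift it by (B) to $\tilde\alpha\in\pi_{1}(M_{0})$; its image in $\pi_{1}(X)$ equals $\pi_{*}(\alpha)=1$, so by (A) its image in $\pi_{1}(X_{0})$ is $1$, so by (C) $\tilde\alpha$ lies in the image of $\pi_{1}(T)$, so by (D) its image $\alpha$ in $\pi_{1}(M)$ is $1$.

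The genuinely essential point is (D): without a $T$-fixed point the fiber class need not die, and indeed $\pi_{*}$ would typically fail to be injective; this is precisely where the torus-manifold hypothesis $M^{T}\neq\emptyset$ is used. The remaining steps are routine but not vacuous: identifying $\pi^{-1}(\partial X)$ as a codimension-$\ge 2$ subset (this is where local standardness enters, via connectedness of the isotropy groups) so that the transversality argument in (B) applies, and the collar argument in (A); I expect these to be the parts requiring the most care, though neither is hard once one knows the orbit space is a manifold with corners.
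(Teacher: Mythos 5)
Your proposal is correct and follows essentially the same route as the paper: restrict to the principal part $M_{0}\to\mathring{X}$, use the exact sequence of the principal $T$-bundle, kill the fiber class via the null-homotopy of a principal orbit through a fixed point, and use the codimension-$\ge 2$ complement to get surjectivity of $\pi_{1}(M_{0})\to\pi_{1}(M)$ and the collar to identify $\pi_{1}(\mathring{X})$ with $\pi_{1}(X)$. The only difference is presentational: the paper packages the injectivity/surjectivity chase as the existence of a factorization $\pi_{1}(\mathring{X})\to\pi_{1}(M)$ fitting into a commutative square, whereas you carry out the equivalent diagram chase explicitly.
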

\begin{proof}
  Let \(M_0\) be the union of all principal orbits in \(M\).
  Then \(M_0\) is a principal \(T\)-bundle over \(\mathring{X}=X-\partial X\).
Therefore there is an exact sequence
\begin{equation*}
  \pi_1(T)\rightarrow \pi_1(M_0)\rightarrow \pi_1(\mathring{X})\rightarrow 1.
\end{equation*}
Because there are fixed points in \(M\), the inclusion of a principal orbit into \(M\) is null-homotopic, i.e. the composition
\begin{equation*}
  \pi_1(T)\rightarrow \pi_1(M_0)\rightarrow \pi_1(M)
\end{equation*}
is the trivial homomorphism.

Hence, there is a map \(\pi_1(\mathring{X})\rightarrow \pi_1(M)\) such that the following diagram commutes
\begin{equation*}
  \xymatrix{
    \pi_1(M_0)\ar[r]\ar[d]&\pi_1(M)\ar[d]\\
    \pi_1(\mathring{X})\ar[r]\ar[ur]&\pi_1(X)
}
\end{equation*}

Since \(\mathring{X}\) is homotopy equivalent to \(X\), it remains to show that \(\pi_1(M_0)\rightarrow \pi_1(M)\) is surjective.
We have \(M_0=M-\bigcup_{\{e\}\neq G\subset T}M^G\), where the union is taken over all non-trivial subtori of \(T\).
Because each \(M^G\) has at least codimension two in \(M\) it follows that \(\pi_1(M_0)\rightarrow \pi_1(M)\) is surjective.
\end{proof}

\begin{theorem}
  \label{sec:fund-groups-torus-5}
  A group \(G\) is the fundamental group of a six-dimensional locally standard torus manifold if and only if it is the fundamental group of a three-dimensional orientable manifold with boundary.
\end{theorem}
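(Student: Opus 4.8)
The plan is to prove the two implications separately; the first is short and the second carries the weight.

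\emph{Necessity.} Suppose $M$ is a six-dimensional locally standard torus manifold with orbit space $Q=M/T$. Then $Q$ is a three-dimensional manifold with corners, hence its underlying topological space is a three-dimensional manifold with boundary, and by Lemma~\ref{sec:fund-groups-torus-6} we have $\pi_1(M)\cong\pi_1(Q)$. So it remains to see that $Q$ is orientable, for which it is enough that $\mathring{Q}=Q\setminus\partial Q$ is orientable. As in the proof of Lemma~\ref{sec:fund-groups-torus-6}, the union $M_0$ of the principal orbits is a principal $T$-bundle over $\mathring{Q}$. Since the vertical tangent bundle of a principal bundle is trivial, $w_1(TM_0)=\pi^*w_1(T\mathring{Q})$, while $w_1(TM_0)=0$ because $M_0$ is an open subset of the orientable manifold $M$. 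As $M_0\to\mathring{Q}$ has connected fibre, $\pi_1(M_0)\to\pi_1(\mathring{Q})$ is surjective, so $\pi^*$ is injective on first cohomology with $\mathbb{Z}/2$ coefficients, and therefore $w_1(T\mathring{Q})=0$. Hence $Q$ is orientable and $\pi_1(M)$ is the fundamental group of an orientable three-dimensional manifold with boundary.

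\emph{Sufficiency.} Let $X$ be a three-dimensional orientable manifold with boundary and set $G=\pi_1(X)$. We may assume $X$ is compact and connected with $\partial X\neq\emptyset$ (otherwise pass to Scott's compact core, then to the component carrying $G$, and, if $\partial X=\emptyset$, remove an open $3$-ball, which does not change the fundamental group). The goal is to realize $X$ as the orbit space of a locally standard $T^3$-manifold. To this end I would equip $X$ with the structure of a \emph{nice} three-dimensional manifold with corners (each codimension-$k$ face lying in exactly $k$ facets) as follows: triangulate $\partial X$ by a complex $K$, pass to its barycentric subdivision $K'$, and let $\Gamma\subset\partial X$ be the $1$-skeleton of the cell decomposition dual to $K'$ --- a trivalent graph whose complementary regions are open discs, one for each vertex of $K'$. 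A standard local modification inside a collar of $\partial X$ then installs corners along $\Gamma$, producing a nice manifold with corners $Q$ with underlying space $X$, whose facets are the closures of these discs; two facets share an edge exactly when the corresponding vertices of $K'$ are joined by an edge of $K'$, and three facets meet at a vertex exactly when the corresponding vertices of $K'$ span a triangle of $K'$.

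Next I would put a characteristic function on $Q$. Colour the vertex of $K'$ corresponding to a simplex $\sigma$ of $K$ by $\dim\sigma\in\{0,1,2\}$. This colours the facets of $Q$ so that adjacent facets receive distinct colours and the three facets at any vertex receive all three colours, since the vertices of a triangle of $K'$ correspond to a flag $\sigma_0\subset\sigma_1\subset\sigma_2$ in $K$ with $\dim\sigma_j=j$. Hence sending a facet of colour $i$ to the standard basis vector $e_{i+1}\in\mathbb{Z}^3$ defines a characteristic function $\lambda$ on $Q$: at every vertex the three assigned vectors form a $\mathbb{Z}$-basis, and along every edge the two assigned vectors span a direct summand. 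Now form $M=M(Q,\lambda)=(T^3\times Q)/\!\sim$, the locally standard torus manifold canonically built from $(Q,\lambda)$. It is a closed connected $T^3$-manifold with orbit space $Q$, and it has $T$-fixed points, one for each vertex of $Q$ (at least one, since $\partial X\neq\emptyset$). It is moreover orientable: the preimage $\pi^{-1}(\partial Q)=\bigcup_i M_{F_i}$ of the facets is a union of codimension-two submanifolds of $M$, so $H^1(M;\mathbb{Z}/2)$ injects into $H^1(M\setminus\pi^{-1}(\partial Q);\mathbb{Z}/2)=H^1(T^3\times\mathring{Q};\mathbb{Z}/2)$, where $w_1(TM)$ restricts to zero because $Q$, hence $\mathring{Q}$, is orientable. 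Thus $M$ is a six-dimensional locally standard torus manifold, and by Lemma~\ref{sec:fund-groups-torus-6} (or directly, $Q$ being homeomorphic to $X$) we get $\pi_1(M)\cong\pi_1(Q)=\pi_1(X)=G$.

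I expect the main obstacle to lie in the sufficiency direction: carrying out the corner-introduction so that the facet pattern of $Q$ is exactly the controlled one described above, and invoking the construction of $M(Q,\lambda)$ for a general nice manifold with corners (not merely for a simple polytope), together with the verification that the resulting $M$ is a bona fide torus manifold --- closed, orientable, and with a fixed point. What becomes routine rather than delicate is the existence of the characteristic function itself, thanks to the canonical $3$-colouring furnished by the barycentric subdivision.
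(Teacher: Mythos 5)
Your necessity argument coincides with the paper's: the paper disposes of orientability in one sentence (``since \(T\) is orientable, an orientation on \(M\) induces one on \(M/T\)''), and your Stiefel--Whitney computation on the principal stratum is a correct way of making that precise. For sufficiency you take a genuinely different route. The paper never touches the corner structure of \(\partial X\): it invokes J\"anich's Klassifikationssatz to obtain a five-dimensional special \(T^2\)-manifold \(Y\) with \(Y/T^2=X\), sets \(Y'=Y\times S^1\) (a locally standard \(T^3\)-manifold over \(X\) with no fixed points, each component of \(\partial X\) being a single corner-free facet), and then manufactures fixed points by gluing in \(S^6\) minus a tube around a principal orbit, so that the final orbit space is \(X\) with an open ball removed and the tetrahedral face structure of \(S^6/T^3\) on the new boundary sphere; Lemma~\ref{sec:fund-groups-torus-6} then computes \(\pi_1\). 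You instead install a face structure on all of \(\partial X\) via the dual of a barycentric subdivision, exploit its canonical \(3\)-colouring to write down a characteristic function with values \(e_1,e_2,e_3\), and form \(M(Q,\lambda)\). Your combinatorics are correct (adjacent dual cells receive distinct colours and the three at any vertex receive all three, so the basis and direct-summand conditions hold), your orientability and \(\pi_1\) arguments go through, and \(\partial X\neq\emptyset\) guarantees vertices of \(Q\), hence fixed points. What your route buys is independence from J\"anich's theorem and an orbit space equal to \(X\) on the nose; what it costs is precisely the input you flag yourself: that \((T^3\times Q)/\!\sim\) admits a smooth structure making it a smooth locally standard \(T^3\)-manifold with orbit space \(Q\) for a general nice manifold with corners, not merely for a simple polytope. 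This is true --- it is Davis's fibre-bundle/\(T\)-normal-system construction, which the present paper deploys only in Section~5 and only for polytopes --- but it is a genuine external citation, whereas the paper's proof only ever glues honest smooth \(T\)-manifolds and needs no such smoothing step. One caveat: the theorem is only sensible with ``manifold'' read as compact (a torus manifold is closed, so its fundamental group is finitely presented), so the detour through Scott's compact core should be replaced by simply taking \(X\) compact from the outset.
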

\begin{proof}
  By Lemma~\ref{sec:fund-groups-torus-6}, the fundamental group of a six-dimensional locally standard torus manifold is isomorphic to the fundamental group of its orbit space.
This orbit space is after smoothing the corners a three-dimensional manifold with boundary.
Since the torus \(T\) is orientable, an orientation on the torus manifold induces an orientation on the orbit space.

Now let \(X\) be a three-dimensional orientable manifold with boundary. 
Then by J\"anich's Klassifikationssatz \cite{0153.53703}, there is a five-dimensional special \(T^2\)-manifold \(Y\) with orbit space \(X\).
Since \(T^2\) is orientable, an orientation on \(X\) induces an orientation on \(Y\).
It follows from J\"anich's construction that \(Y'=Y\times S^1\) is a locally standard six-dimensional \(T^3\)-manifold without fixed points.
The orbit space of the \(T^3\)-action on \(Y'\) is \(X\).

Now let \(\iota_1:T^3\times D^3\hookrightarrow Y'\) and \(\iota_2:T^3\times D^3\hookrightarrow S^6\) be inclusions of equivariant tubular neighborhoods of principal orbits in \(Y'\) and \(S^6\), respectively.
Define
\begin{equation*}
  M=Y'-\iota_1(T^3\times \mathring{D}^3) \cup_{T^3\times S^2} S^6-\iota_2(T^3\times \mathring{D}^3).
\end{equation*}

Then \(M\) is a locally standard torus manifold. 
The orbit space \(M/T\) is diffeomorphic to a connected sum at interior points of \(X\) and \(S^6/T\).
Since \(S^6/T\) is after smoothing the corners a three-dimensional disc, it follows that \(M/T\) is homeomorphic to \(X\) with a three-dimensional open disc removed from its interior.
Therefore we have by Lemma~\ref{sec:fund-groups-torus-6} \(\pi_1(M)=\pi_1(M/T)=\pi_1(X)\).
\end{proof}

\section{Homotopy equivalent but not homeomorphic torus manifolds}
\label{sec:homotopy_not_homeo}

In this section we prove the following theorem.

\begin{theorem}
\label{sec:homot-equiv-but}
  There are simply connected torus manifolds \(M_1,M_2\) and an equivariant map \(M_1\rightarrow M_2\) which is a homotopy equivalence such that \(M_1\) and \(M_2\) are not homeomorphic.
\end{theorem}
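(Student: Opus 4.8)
The plan is to use the construction $\alpha$ of Section~\ref{sec:fundamental}. Put the standard linear $T^6$-action on $S^{12}$ (two poles as fixed points, so $S^{12}$ is a torus manifold) and set $M_1=\alpha(S^{12},S^2\times S^4)$ and $M_2=\alpha(S^{12},X_2)$, where $X_2$ is a ``fake $S^2\times S^4$'': a closed smooth simply connected spin six-manifold with $H^*(X_2)\cong H^*(S^2\times S^4)$ as graded rings but with $p_1(X_2)\neq 0$. Such an $X_2$ can be produced by surgery on a degree-one normal map over $S^2\times S^4$ whose normal invariant is concentrated in degree $4$: the surgery obstruction then lies in $L_6(\mathbb{Z})=\mathbb{Z}/2$ and vanishes (the Arf obstruction only sees the degree-$2$ part of the normal invariant), the resulting manifold is spin and homotopy equivalent to $S^2\times S^4$ by an orientation-preserving homotopy equivalence, and its first Pontryagin class is changed by a nonzero multiple of the generator of $H^4\cong\mathbb{Z}$; the same manifolds also appear in Wall's classification of simply connected six-manifolds. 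Dimension $12$ is essentially forced on this approach: $\alpha(M,X)$ is simply connected only when $X$ is, while homotopy equivalent simply connected closed smooth manifolds are homeomorphic in dimension $4$ (Freedman) and in dimension $5$ (Barden) and are spheres in dimensions $\le 3$, so one needs $\dim X\ge 6$.

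Since $S^2\times S^4$ and $X_2$ are simply connected, the Seifert--van Kampen argument in the proof of Theorem~\ref{sec:fund-groups-torus} gives $\pi_1(M_i)=\pi_1(X_i)=1$, so $M_1$ and $M_2$ are simply connected twelve-dimensional torus manifolds. An orientation-preserving homotopy equivalence $S^2\times S^4\to X_2$ is in particular an orientation-preserving homology isomorphism, so Lemma~\ref{sec:fund-groups-torus-3} produces an equivariant map $F\colon M_1\to M_2$ inducing an isomorphism in homology; as $M_1$ and $M_2$ are simply connected CW complexes, Whitehead's theorem upgrades $F$ to a homotopy equivalence. This yields the required equivariant map which is a homotopy equivalence.

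It remains to show that $M_1$ and $M_2$ are not homeomorphic, and here the tool is Novikov's theorem that rational Pontryagin classes are invariants of the homeomorphism type: any homeomorphism $h\colon M_1\to M_2$ satisfies $h^*p_1(M_2)=p_1(M_1)$ in $H^4(M_1;\mathbb{Q})$, so it suffices to prove $p_1(M_1)=0$ and $p_1(M_2)\neq 0$. For $p_1(M_2)\neq 0$: the image $U\subset M_2$ of $T^6\times(X_2\setminus\mathring{D}^6)$ is open, $TM_2|_U$ is the tangent bundle of $T^6\times(X_2\setminus\mathring{D}^6)$, and since $T^6$ is parallelizable this is stably the pullback of $TX_2|_{X_2\setminus\mathring{D}^6}$; as deleting a disc from a six-manifold does not affect $H^4$, the class $p_1(X_2)|_{X_2\setminus\mathring{D}^6}$ is nonzero, so $p_1(M_2)|_U\neq 0$ and hence $p_1(M_2)\neq 0$. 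For $p_1(M_1)=0$: the restrictions of $p_1(M_1)$ to the two pieces $T^6\times(S^2\times S^4\setminus\mathring{D}^6)$ and $S^{12}\setminus(T^6\times\mathring{D}^6)$ both vanish because $p_1(S^2\times S^4)=0$ and $p_1(S^{12})=0$; and a short Mayer--Vietoris computation for $M_1$ decomposed into these two pieces shows that $H^4(M_1)$ injects into the direct sum of the $H^4$ of the two pieces, the point being that $H^3$ of the overlap $T^6\times S^5$ equals $H^3(T^6)$ and already lies in the image of $H^3\bigl(T^6\times(S^2\times S^4\setminus\mathring{D}^6)\bigr)$. Hence $p_1(M_1)=0$, and so $M_1\not\cong M_2$.

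The main obstacle is this last step. One needs a concrete $X_2$ which has $p_1\neq 0$ but really does have the cohomology ring and the Stiefel--Whitney class of $S^2\times S^4$ (so that it is genuinely homotopy equivalent to $S^2\times S^4$, not merely of the same cohomology type), and one needs enough control over the cohomology of $\alpha(S^{12},X)$---concretely the restriction maps to the two pieces in degrees $3$ and $4$---to conclude $p_1(M_1)=0$. Both are routine given Wall's classification of simply connected six-manifolds together with a Mayer--Vietoris computation, but that is where the real work lies.
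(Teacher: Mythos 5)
Your argument is correct and follows essentially the same route as the paper: form $\alpha(S^{2n},X)$ and $\alpha(S^{2n},X')$ for two homotopy equivalent $n$-manifolds with different first Pontryagin classes, get the equivariant homotopy equivalence from Lemma~\ref{sec:fund-groups-torus-3}, show via Mayer--Vietoris that $H^4$ injects into the $H^4$ of the two pieces so that the $p_1$'s differ, and conclude by Novikov's theorem. The only difference is the input pair --- the paper uses $\C P^n$ versus a homotopy $\C P^n$ with nonstandard $p_1$ (taking $M_i=\alpha(S^{4n},\cdot)$, $n\ge 3$), where you use $S^2\times S^4$ versus a fake $S^2\times S^4$ produced by surgery --- which is an inessential variation.
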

\begin{proof}
  Let \(X\) be a homotopy \(\C P^n\), \(n\geq 3\), such that \(X\) has non-standard first Pontrjagin-class \(p_1(X)=ax^2\neq \pm (n+1)x^2\), where \(a\in \mathbb{Z}\) and \(x\in H^2(X)\) is a generator.
As in the proof of Theorem \ref{sec:fund-groups-torus} one sees that \(\alpha(S^{4n},X)\) and \(\alpha(S^{4n},\C P^n)\) are simply connected.
By Lemma~\ref{sec:fund-groups-torus-3}, there is an equivariant map
  \begin{equation*}
    M_1=\alpha(S^{4n},X)\rightarrow \alpha(S^{4n},\C P^n)=M_2
  \end{equation*}
which is a homotopy equivalence.

We claim that \(H^4(M_1)\) is torsion-free but \(p_1(M_1)\neq p_1(M_2)\).
From this claim it follows that \(M_1\) and \(M_2\) are not homeomorphic (invariance of rational Pontrjagin-classes).

We have the following exact sequence
  \begin{gather*}
  \xymatrix{
    H^4(T^{2n}\times S^{2n-1})&H^4(T^{2n}\times (X-\mathring{D}^{2n}))\oplus H^4(S^{4n}-(T^{2n}\times \mathring{D}^{2n}))\ar[l]}\\
\xymatrix{
&H^4(M_1)\ar[l]
  &H^3(T^{2n}\times S^{2n-1})\ar[l]&H^3(T^{2n}\times(X-\mathring{D}^{2n}))\oplus\dots\ar[l]_(.57)\phi
}    
  \end{gather*}
Since \(\phi\) is surjective and \(H^4(S^{4n}-(T^{2n}\times \mathring{D}^{2n}))=0\),
it follows that \(H^4(M_1)\) injects into \(H^4(T^{2n}\times (X-D^{2n}))\).

Because \(H^4(T^{2n}\times S^{2n-1})\) and \(H^4(T^{2n}\times (X-\mathring{D}^{2n}))\) are torsion-free, it follows that
\(H^4(M_1)\) is torsion-free and \(p_1(M_1)=ax'\), where \(x'\in H^4(M_1)\) is a primitive vector.
Similarly, one shows that \(p_1(M_2)=(n+1)x''\), where \(x''\in H^4(M_2)\) is a primitive vector.
Therefore \(p_1(M_1)\neq p_1(M_2)\) follows.
\end{proof}

\section{Homeomorphic but not diffeomorphic torus manifolds}
\label{sec:homeo_not_diffeo}

In this section we prove the following theorem.

\begin{theorem}
\label{sec:homeomorphic-but-not}
  There are \(18\)-dimensional torus manifolds \(M_1\), \(M_2\) such that \(M_1\), \(M_2\) are equivariantly homeomorphic but not diffeomorphic.
\end{theorem}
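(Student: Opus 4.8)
The plan is to realise $M_1$ and $M_2$ as manifolds of the form $\alpha(S^{18},X)$, where $S^{18}$ carries a linear $T^9$-action and is thereby an $18$-dimensional torus manifold. Take $M_2=\alpha(S^{18},S^9)$, which is equivariantly diffeomorphic to $S^{18}$ by the remark following Lemma~\ref{sec:fund-groups-torus-4}, and $M_1=\alpha(S^{18},\Sigma)$ for a homotopy $9$-sphere $\Sigma\ne S^9$ to be specified. By Theorem~\ref{sec:fund-groups-torus-2} (with $M=S^{18}$), $M_1$ is a simply connected homology $18$-sphere, hence homotopy equivalent to $S^{18}$; in particular $M_1$ is a homotopy $18$-sphere. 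So the two things to prove are that $M_1$ and $M_2$ are equivariantly homeomorphic, and that for a suitable $\Sigma$ the manifold $M_1$ is an exotic $18$-sphere.

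\emph{Equivariant homeomorphism.} Since $\Sigma$ is homeomorphic to $S^9$ and the smoothly embedded sphere $\iota_1(S^8)\subset\Sigma$ is bicollared, Brown's Schoenflies theorem shows that $\Sigma-\iota_1(\mathring D^9)$ is homeomorphic to the standard disc by a homeomorphism which respects the collars on the boundary; hence there is a homeomorphism $\Sigma\to S^9$ carrying the pair $(\iota_1(D^9),\iota_1|_{S^8})$ to standard data. Performing the construction $\alpha(\cdot,\cdot)$ in the topological category — the topological analogue of Lemma~\ref{sec:fund-groups-torus-4} holding by the same arguments, with local flatness and topological isotopy extension in place of their smooth counterparts in dimension $\ge 5$ — then yields a $T^9$-equivariant homeomorphism $M_1\cong\alpha(S^{18},S^9)=M_2$.

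\emph{A homomorphism $\Theta_9\to\Theta_{18}$.} Unwinding the definition, $M_1$ is obtained from $S^{18}$ by removing the equivariant tubular neighbourhood $\iota_2(T^9\times\mathring D^9)$ of a principal orbit and gluing $T^9\times D^9$ back along $T^9\times S^8$ by the diffeomorphism $\mathrm{id}_{T^9}\times g$, where $g\in\Diff^+(S^8)$ represents the class of $\Sigma$ under $\pi_0\Diff^+(S^8)\cong\Theta_9$. Thus $[\Sigma]\mapsto[M_1]$ is a well-defined map $\Phi\colon\Theta_9\to\Theta_{18}$; it is a homomorphism because composing clutchings $gg'$ corresponds to performing the two regluings in succession, which produces the connected sum of the two resulting homotopy spheres. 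It therefore suffices to prove $\Phi\ne 0$ and to choose $\Sigma$ with $[\Sigma]\notin\ker\Phi$.

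\emph{Nontriviality, and the main obstacle.} To prove $\Phi\ne 0$ I would pass to smoothing theory and compute the normal invariant of $M_1$ viewed as a smoothing of $S^{18}$. The smooth structure differs from the standard one only on the reglued handle $T^9\times D^9$, so its normal invariant factors through the collapse $S^{18}\to S^{18}/\bigl(S^{18}-\iota_2(T^9\times\mathring D^9)\bigr)$, which is the Thom space of the trivial $9$-plane bundle over $T^9$; and since the regluing diffeomorphism is the single diffeomorphism $g$ applied constantly along $T^9$, the resulting class is obtained from the normal invariant of $\Sigma$ in $\pi_9(\mathrm{TOP}/\mathrm{O})$ by pulling back along a suitable map $S^{18}\to S^9$ coming from this collapse and the projection $T^9\to\mathrm{pt}$. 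The real content is to arrange that this pullback is nonzero, i.e. that the exotic structure of $\Sigma$ spread out over $T^9$ is not annihilated when one collapses $T^9$ back to a point; this is where a careful choice of $\Sigma\in\Theta_9$ enters, and is presumably the reason the construction is carried out in dimension $18=2\cdot 9$ rather than lower. Note one has some freedom here: the normal invariant map $\Theta_9\to\pi_9(\mathrm{TOP}/\mathrm{O})$ has kernel $bP_{10}\cong\mathbb{Z}/2\mathbb{Z}$ while $\Theta_9$ has order $8$, so there are homotopy $9$-spheres with nonzero normal invariant. Once the normal invariant of $M_1$ is shown to be nonzero, $M_1$ is a nonstandard smoothing of $S^{18}$ — the normal invariant map $\Theta_{18}\to\pi_{18}(\mathrm{TOP}/\mathrm{O})$ is injective since $bP_{19}=0$ — and hence $M_1$ is not diffeomorphic to $M_2$. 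I expect this last normal-invariant computation to be the main obstacle; the rest is formal manipulation of the construction $\alpha(\cdot,\cdot)$ together with the surgery exact sequence for spheres.
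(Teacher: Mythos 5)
You have correctly isolated where the content of your argument lies, but that step is both missing and, I believe, unfixable. Your candidate $M_1=\alpha(S^{18},\Sigma)$ is obtained from $S^{18}$ by regluing $T^9\times D^9$ along $T^9\times S^8$ via $\id_{T^9}\times g$, and as you say its smoothing invariant in $\pi_{18}(\mathrm{TOP}/\mathrm{O})\cong\Theta_{18}$ is the composite of $\nu(\Sigma)\colon S^9\to \mathrm{TOP}/\mathrm{O}$ with the collapse $S^{18}\to S^9$. That collapse is the Pontrjagin--Thom map of the principal orbit $T^9\subset S^{18}$ equipped with the framing coming from the equivariant tubular neighbourhood. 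For the linear action on $S^{18}\subset\C^9\oplus\R$ a principal orbit is the standard product torus $\prod S^1\subset\C^9$, and the equivariant framing is the product of the radial framings of the circles $S^1\subset\C$; each radially framed circle bounds the framed disc $D^2$, so the framed torus is framed null-bordant, the collapse map is stably trivial, and the composite with $\nu(\Sigma)$ (a map into an infinite loop space) is null-homotopic. Hence the smoothing invariant of $\alpha(S^{18},\Sigma)$ vanishes and, since $\Theta_{18}\cong\pi_{18}(\mathrm{TOP}/\mathrm{O})$, the manifold $\alpha(S^{18},\Sigma)$ is diffeomorphic to $S^{18}$ for \emph{every} homotopy $9$-sphere $\Sigma$. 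So no choice of $\Sigma$ can make your $M_1$ exotic; the "main obstacle" you defer is not merely hard, it obstructs the whole strategy. (Your equivariant-homeomorphism step and the observation that $\alpha(S^{18},X)$ is a homotopy sphere when $X$ is are fine.)

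The paper avoids this by not trying to vary the smooth structure of $S^9$ at all: it takes two $9$-dimensional orientable manifolds $X_1,X_2$ that are homeomorphic but have $p_2(X_1)=0$ and $p_2(X_2)\neq 0$ (necessarily a torsion class, so there is no conflict with Novikov's theorem), sets $M_i=\alpha(S^{18},X_i)$, and gets the equivariant homeomorphism from the topological analogue of Lemma~\ref{sec:fund-groups-torus-4} (this is exactly your first step, made precise via Lemma~\ref{sec:homeomorphic-but-not-1}). Non-diffeomorphy is then detected by the integral Pontrjagin class: the Mayer--Vietoris argument of Theorem~\ref{sec:homot-equiv-but} shows $H^8(M_i)$ injects into $H^8(T^9\times(X_i-\mathring{D}^9))\oplus H^8(S^{18}-(T^9\times\mathring{D}^9))$, whence $p_2(M_1)=0$ while $p_2(M_2)\neq 0$. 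If you want to salvage your approach you must feed into $\alpha(S^{18},\cdot)$ two homeomorphic manifolds distinguished by a diffeomorphism invariant that survives the gluing; a pair of homotopy spheres does not do this.
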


For the proof of this theorem we need the following lemma.

\begin{lemma}
\label{sec:homeomorphic-but-not-1}
  Let \(\iota_1,\iota_2: D_1^n\rightarrow M^n\), \(n\geq 6\), be locally flat orientation-preserving embeddings in a topological manifold \(M\).
  Then there is an orientation-preserving homeomorphism \(f:M\rightarrow M\) such that \(f\circ \iota_2|_{D^n_{1/2}}=\iota_1|_{D^n_{1/2}}\). Here \(D^n_a\) denotes the \(n\)-dimensional disc with radius \(a\).
\end{lemma}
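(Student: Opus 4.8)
The statement is a uniqueness result for locally flat embedded discs in a topological manifold, in dimension $n \geq 6$ — essentially the topological analogue of the smooth fact that any two orientation-preserving embeddings of a disc into a connected manifold are ambient isotopic. The plan is to reduce to the case where the two embeddings agree near a fixed interior point and to construct the ambient homeomorphism by an "expanding" isotopy supported in a larger locally flat disc. First I would note that $M$ may be assumed connected (work in the component containing the image of $\iota_1$; the embeddings land in the same component since one must exhibit $f$ only after possibly first moving $\iota_2(D_1^n)$ into the component of $\iota_1(D_1^n)$, which by path-connectedness of $M$ and an isotopy extension argument is harmless — alternatively, if $M$ is disconnected the statement is interpreted componentwise).

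The key input is local flatness together with the \emph{annulus theorem} / \emph{stable homeomorphism theorem} in dimension $n \geq 5$ (Kirby, and Quinn in dimension $4$, but here $n \geq 6$ so we are safely in the high-dimensional regime), which supplies the topological isotopy extension theorem for locally flat submanifolds. Concretely: choose a path $\gamma$ in $M$ from $\iota_1(0)$ to $\iota_2(0)$; using local flatness one thickens $\gamma$ to a locally flat embedded disc and, via the topological isotopy extension theorem, produces a homeomorphism $g \colon M \to M$ carrying $\iota_2(0)$ to $\iota_1(0)$ and (after shrinking) carrying a small sub-disc of $\iota_2(D_1^n)$ into $\iota_1(D_1^n)$. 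Composing, we reduce to the situation of two locally flat orientation-preserving embeddings $\iota_1, \iota_2 \colon D_1^n \hookrightarrow D_1^n$ with $\iota_1(0) = \iota_2(0) = 0$, and we must find an orientation-preserving self-homeomorphism of $M$, supported in a neighborhood of $\iota_1(D^n_1)$, taking $\iota_2|_{D^n_{1/2}}$ to $\iota_1|_{D^n_{1/2}}$.

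For this local problem I would invoke the \emph{uniqueness of collars / discs} in the topological category: by the annulus theorem, the region between $\iota_2(S^{n-1}_{1/2})$ and $\iota_1(S^{n-1}_{3/4})$ (say) inside the ambient locally flat disc is homeomorphic to $S^{n-1} \times [0,1]$, and a standard radial "push" across this annulus, tapered to the identity outside $\iota_1(D^n_1)$, gives a homeomorphism of $M$ carrying $\iota_2(D^n_{1/2})$ onto $\iota_1(D^n_{1/2})$; one then corrects by a homeomorphism of the target disc $\iota_1(D^n_{1/2})$ realizing the resulting self-map of $D^n_{1/2}$ — which is the identity on $D^n_{1/2}$ up to isotopy because an orientation-preserving self-homeomorphism of $D^n$ fixing the boundary is isotopic to the identity (Alexander trick). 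Taking the composite, and checking that orientation is preserved throughout (each building block is orientation-preserving, since we arranged $\iota_1, \iota_2$ to be orientation-preserving and the Alexander/annulus pieces are orientation-preserving by construction), yields the desired $f$.

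\textbf{Main obstacle.} The delicate point is entirely in the topological category: unlike the smooth or PL setting, there is no tubular neighborhood theorem for free, so every "thickening" and every "ambient isotopy" step must be justified via local flatness plus the high-dimensional tools (annulus theorem, topological isotopy extension, and the fact that locally flat submanifolds have normal bundles / mapping-cylinder neighborhoods in dimension $n \geq 6$, Kirby–Siebenmann). The hypothesis $n \geq 6$ is exactly what makes all of these available (isotopy extension for locally flat embeddings of codimension $\geq 1$ needs the ambient dimension to be large, and the relevant normal bundle existence results are codimension-free only once $n$ is large enough). So the proof is less a calculation than a careful citation of the Kirby–Siebenmann machinery, with the geometric skeleton being the standard "connect the centers, then expand across an annulus, then Alexander-trick away the residual boundary homeomorphism" argument.
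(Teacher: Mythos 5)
Your overall strategy is the same as the paper's: reduce to a local problem at a common centre and then settle it with the high-dimensional topological machinery (Kirby's stable homeomorphism/annulus theorem and topological isotopy extension). The paper does this in four clean steps: (i) transitivity of orientation-preserving homeomorphisms to arrange \(\iota_1(0)=\iota_2(0)\); (ii) uniqueness of locally flat \(n\)-cells to get \(f_0\) with \(f_0\circ\iota_2(D^n_1)=\iota_1(D^n_1)\); (iii) Kirby's theorem to isotope the orientation-preserving homeomorphism \(\iota_1^{-1}\circ\iota_2|_{\mathring{D}^n_1}\) to one that is the identity on \(D^n_{1/2}\); (iv) the Edwards--Kirby isotopy extension theorem to extend to \(M\). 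Your proposal, however, has two genuine gaps in execution.

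First, the annulus step is not available in the configuration you have produced. After matching centres there is no reason that \(\iota_2(S^{n-1}_{1/2})\) and \(\iota_1(S^{n-1}_{3/4})\) cobound an annulus --- the two spheres may intersect. Your remedy (``after shrinking, carrying a small sub-disc of \(\iota_2(D^n_1)\) into \(\iota_1(D^n_1)\)'') only yields \(f\circ\iota_2=\iota_1\) on some small disc \(D^n_\epsilon\), not on \(D^n_{1/2}\) as the lemma demands, and this cannot be rescaled away since \(\iota_1\) and \(\iota_2\) are genuinely different maps on \(\{\epsilon\leq|x|\leq 1/2\}\). The missing ingredient is the uniqueness, up to ambient homeomorphism, of locally flat \(n\)-cells in a topological \(n\)-manifold, which lets one match the \emph{entire} images \(\iota_1(D^n_1)=\iota_2(D^n_1)\) before doing anything radial; this is exactly the paper's step (ii). Second, the residual self-homeomorphism of \(D^n_{1/2}\) you propose to correct away does \emph{not} fix the boundary sphere, so the Alexander trick as you cite it (for homeomorphisms fixing the boundary) does not apply. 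What is needed is that an orientation-preserving homeomorphism of the open disc is isotopic to the identity on a prescribed compact subset --- i.e.\ the stable homeomorphism theorem itself, which is what the paper invokes in step (iii). You have that theorem in your toolkit, so both gaps are reparable, but as written the argument does not establish the lemma.
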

\begin{proof}
  Since the group of orientation-preserving homeomorphisms acts transitively on \(M\) we may assume that \(\iota_1(0)=\iota_2(0)\).

 By Theorem 3.6 of \cite[p. 95-96]{0264.57004}, there is an orientation-preserving homeomorphism \(f_0:M\rightarrow M\) such that \(f_0\circ\iota_2(D_1^n)=\iota_1(D_1^n)\).
Therefore we may assume that \(\iota_1(D_1^n)=\iota_2(D_1^n)\).

By the remark after Theorem 1 of \cite[p. 575-576]{0176.22004}, the homeomorphism \(\iota_1^{-1}\circ \iota_2|_{\mathring{D}_1^n}\) of \(\mathring{D}_1^n\) is isotopic to a homeomorphism \(f_1:\mathring{D}_1^n\rightarrow \mathring{D}_1^n\) which is the identity on \(D^n_{1/2}\).

By Corollary 1.2 of \cite[p. 63]{0214.50303}, \(\iota_1\circ f_1\circ \iota_2^{-1}|_{\iota_2(D^n_{1/2})}\) extends to a homeomorphism \(f:M\rightarrow M\).
This \(f\) has the required properties.
\end{proof}

\begin{proof}[Proof of Theorem 4.1]
  There are \(9\)-dimensional orientable manifolds \(X_1\), \(X_2\) such that \(X_1\) and \(X_2\) are homeomorphic but \(p_2(X_1)=0\) and \(p_2(X_2)\neq 0\) \cite[Proof of Theorem 3.1, p. 362-363]{1212.57009}.

  From Lemma \ref{sec:homeomorphic-but-not-1} it follows as in the proof of Lemma~\ref{sec:fund-groups-torus-4} that \(M_1=\alpha(S^{18},X_1)\) and \(M_2=\alpha(S^{18},X_2)\) are homeomorphic.

  As in the proof of Theorem~\ref{sec:homot-equiv-but} one sees that \(H^8(M_1)\) injects into \(H^8(T^{9}\times (X_1- \mathring{D}^{9}))\oplus H^8(S^{18}-(T^9\times \mathring{D}^9))\).
Since \(p_2(S^{18})=0\) and \(H^8(T^{9}\times (X_1- \mathring{D}^{9}))\cong H^8(T^{9}\times X_1)\), it follows that \(p_2(M_1)=0\) and \(p_2(M_2)\neq 0\).
  Hence, \(M_1\) and \(M_2\) are not diffeomorphic.
\end{proof}

\section{Classification of quasitoric manifolds up to equivariant diffeomorphism}
\label{sec:app-intro}

The purpose of this section is to prove some facts about differentiable structures on polytopes and equivariant differentiable structures on quasitoric manifolds.

We use the following notation a (weakly) quasitoric manifold is a locally standard torus manifold such that the orbit space is face-preserving homeomorphic to a simple convex polytope.
We say that a quasitoric manifold \(M\) is strongly quasitoric if the orbit space (with the smooth structure induced by the smooth structure of \(M\)) is diffeomorphic to a simple convex polytope \(P\) (with the natural smooth structure coming from the embedding \(P\hookrightarrow \R^n\)).

We will show that this smooth structure of \(P\) depends only on the combinatorial type of \(P\).
Moreover, we will see that the smooth structures (up to equivariant diffeomorphism) on \(M\) are one-to-one to the smooth structures on \(P\).
Therefore in the equivariant homeomorphism class of a quasitoric manifold \(M\) there is up to equivariant diffeomorphism exactly one strongly quasitoric manifold.

We begin by proving that two combinatorially equivalent simple polytopes equip\-ped with their natural smooth structures are diffeomorphic.
To show that we will work with more general objects than simple polytopes so called nice manifolds with corners.

An \(n\)-dimensional manifold with corners \(X\) is locally modelled on charts \(\psi_U: X\supset U\rightarrow [0,1[^n\), where \(U\) is an open subset of \(X\).
For each \(x\in U\) we define \(c(x)\) to be the number of components of \(\psi_U(x)\) which are zero.
This number \(c(x)\) is independent of the chart \(\psi_U\) and the neighborhood \(U\) of \(x\).
So that we get a well defined function \(c:X\rightarrow \{0,\dots,n\}\).

We call the closures of the connected components of \(c^{-1}(k)\) the codimension \(k\) faces of \(X\).
The faces of codimension one are also called facets of \(X\).

A manifold \(X\) with corners is called nice if each codimension \(k\) face of \(X\) is contained in exactly \(k\) facets.
Nice manifolds with corners are also called manifolds with faces.
If \(X\) is a nice manifold with corners, then all its faces are again nice manifolds with corners.

Associated to a nice manifold with corners \(X\) is its face-poset \(\mathcal{P}(X)\) which consists out of the faces of \(X\). 
The partial ordering on \(\mathcal{P}(X)\) is given by inclusion.
We call two nice manifolds with corners of the same dimension combinatorially equivalent if they have isomorphic face-posets.

\begin{theorem}
\label{sec:class-quas-manif}
  Let \(X\) be a \(n\)-dimensional nice manifold with corners such that for \(1\leq i\leq n\) all \(i\)-dimensional faces of \(X\) are (after smoothing the corners) diffeomorphic to the \(i\)-dimensional disc \(D^i\).
Assume that there is an \(n\)-dimensional simple polytope \(P\) and
\begin{enumerate}
\item an isomorphism \(\phi:\mathcal{P}(X)\rightarrow \mathcal{P}(P)\),
\item for a line shelling (see \cite[Chapter 8]{0823.52002}) \(F_1,\dots,F_m\) of \(P\) and some \(k<m\), there is a diffeomorphism \(f\) of a neighborhood of \(\phi^{-1}(F_1)\cup\dots\cup\phi^{-1}(F_k)\) onto a neighborhood of \(F_1\cup\dots\cup F_k\) such that for each face \(F\) of \(X\) which is contained in \(\phi^{-1}(F_1)\cup\dots\cup\phi^{-1}(F_k)\) we have \(\phi(F)=f(F)\).
\end{enumerate}
Then there is a diffeomorphism \(g:X\rightarrow P\) such that for each face \(F\) of \(X\) we have \(\phi(F)=g(F)\).
Moreover, for some neighborhood \(W\) of \(\phi^{-1}(F_1)\cup\dots\cup\phi^{-1}(F_k)\) we have \(g|_W=f|_W\).
\end{theorem}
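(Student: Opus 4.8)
The plan is to prove this by a double induction: an outer induction on the dimension $n$, and, for fixed $n$, an inner induction on the number $m-k$ of facets of $P$ that still have to be ``built in''. For $n=1$ the statement is elementary ($X$ and $P$ are closed intervals, $\phi$ matches their endpoints, and the Alexander trick on $[0,1]$ does the job), so I assume $n\geq 2$ and that the theorem holds in all dimensions $<n$. Throughout I would pass freely to the smoothings of the corners: the smoothing of a nice manifold with corners is unique up to diffeomorphism, compatibly with passage to faces, and collars of faces are unique up to isotopy, so this loses nothing. I would also use the standard combinatorial facts about line shellings (\cite[Chapter 8]{0823.52002}): for each $j$ the union $F_1\cup\dots\cup F_j$ is an $(n-1)$-ball, and $F_{j+1}\cap(F_1\cup\dots\cup F_j)$ is the union of the facets of an initial segment of a line shelling of $\partial F_{j+1}$ — a proper $(n-2)$-ball when $j+1<m$, and all of $\partial F_m$ when $j+1=m$.

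For the inductive (non-terminal) step, suppose $k<m-1$ and that a diffeomorphism onto a neighbourhood of $F_1\cup\dots\cup F_k$ respecting $\phi$ has already been constructed on a neighbourhood $N$ in $X$ of $\phi^{-1}(F_1\cup\dots\cup F_k)$. Then $\phi^{-1}(F_{k+1})$ is an $(n-1)$-dimensional nice manifold with corners all of whose faces are discs; $\phi$ restricts to an isomorphism of its face-poset with that of the simple polytope $F_{k+1}$; and $F_{k+1}\cap(F_1\cup\dots\cup F_k)$ is an initial segment, of length strictly less than the number of facets of $F_{k+1}$, of a line shelling of $\partial F_{k+1}$. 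Restricting the already-constructed diffeomorphism to $N\cap\phi^{-1}(F_{k+1})$ (here one uses that it carries faces to faces compatibly with $\phi$, so it does restrict to $\phi^{-1}(F_{k+1})$) and composing with the collar retraction supplies data of type (2) for $F_{k+1}$. Hence the $(n-1)$-dimensional case of the theorem produces a diffeomorphism $g_{k+1}\colon\phi^{-1}(F_{k+1})\to F_{k+1}$ respecting $\phi$ and agreeing with the given map near $\phi^{-1}(F_1\cup\dots\cup F_k)$. Using uniqueness of collars of $\phi^{-1}(F_{k+1})$ in $X$ and of $F_{k+1}$ in $P$, one thickens $g_{k+1}$ into $X$ and glues it to the old map, obtaining the desired extension to a neighbourhood of $\phi^{-1}(F_1\cup\dots\cup F_{k+1})$. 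This runs the inner induction down to $k=m-1$.

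For the terminal step, $F_m\cap(F_1\cup\dots\cup F_{m-1})=\partial F_m$, and we have a diffeomorphism on a neighbourhood of $\phi^{-1}(F_1\cup\dots\cup F_{m-1})$, in particular near $\partial\phi^{-1}(F_m)$. Picking a line shelling $G_1,\dots,G_{m'}$ of $F_m$ and applying the $(n-1)$-dimensional theorem once more with $k'=m'-1<m'$ gives a diffeomorphism $\phi^{-1}(F_m)\to F_m$ respecting $\phi$ that agrees with the given map near $\phi^{-1}(G_1\cup\dots\cup G_{m'-1})$; the remaining discrepancy near $\phi^{-1}(G_{m'})$ is supported in the interior of a disc, hence isotopic to the identity, so after a further isotopy we may assume agreement on all of a neighbourhood of $\phi^{-1}(\partial F_m)$. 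Gluing and thickening as before yields a diffeomorphism $\tilde g$ of a neighbourhood of $\partial X$ onto a neighbourhood of $\partial P$, respecting $\phi$ and agreeing with $f$ near $\phi^{-1}(F_1\cup\dots\cup F_k)$. Since the hypothesis on the top-dimensional face says $X$ (smoothed) is diffeomorphic to $D^n$, and likewise $P$, it remains to cap $\tilde g$ off over the complementary disc. This capping-off is the main obstacle, because an arbitrary diffeomorphism of $S^{n-1}$ need not extend over $D^n$. I would handle it by strengthening the inductive hypothesis to carry along the assertion that the diffeomorphism produced is, on $\partial X$, isotopic through face-preserving diffeomorphisms to a fixed ``standard'' identification $\partial X\to\partial P$ — a claim that closes up under the induction precisely because the $(n-1)$-dimensional theorem is invoked with this strengthened conclusion and all gluings are performed along collars (and it holds trivially in the base case). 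Granting this, $\tilde g|_{\partial X}$ is isotopic to the identity, hence extends over $D^n$, and after adjusting the extension to agree with $\tilde g$ on a collar (uniqueness of collars once more) one obtains the required diffeomorphism $g\colon X\to P$.
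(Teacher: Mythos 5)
Your skeleton --- induction on dimension, working through the facets in shelling order, gluing along collars, and isolating the last facet and the final capping-off as the delicate steps --- is the same as the paper's, but both of your delicate steps contain the same genuine gap: you assert that a diffeomorphism supported in the interior of a disc is isotopic to the identity. In your terminal step, the discrepancy between the diffeomorphism produced by the $(n-1)$-dimensional induction and the map already defined near \(\partial\phi^{-1}(F_m)\) restricts on \(G_{m'}\) to a self-diffeomorphism of a smoothed \((n-2)\)-disc which is the identity near the boundary, and you need it to be isotopic to the identity \emph{rel} a neighbourhood of the boundary (an unconstrained isotopy would destroy the agreement you already have over \(G_1\cup\dots\cup G_{m'-1}\)). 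The group of such isotopy classes is \(\pi_0\Diff(D^{n-2},\partial)\cong\Theta_{n-1}\), which is nontrivial for many \(n\) (e.g.\ \(\Theta_7=\mathbb{Z}/28\) for \(n=8\)), so the claim is false as a general principle. The obstruction is killed only by the global hypothesis that the faces of \(X\) smooth to standard discs, and this is exactly where the paper invokes it: since \(\partial X\) and \(\partial P\) both smooth to the standard \(S^{n-1}\), and both arise by gluing the standard disc \(V=W\cup\bigcup_{i\leq k}\phi^{-1}(F_i)\) to \(D^{n-1}\) along the two attaching maps, the twisted sphere defined by their difference is standard, hence the difference extends over \(D^{n-1}\). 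Note that you use the disc hypothesis only once, for the top-dimensional face at the very end, whereas it is needed at every terminal step in every dimension.

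The second gap is the final capping-off. You rightly observe that a diffeomorphism of \(S^{n-1}\) need not extend over \(D^n\), but your remedy --- strengthening the induction to assert that the constructed boundary diffeomorphism is face-preservingly isotopic to a fixed standard identification \(\partial X\rightarrow\partial P\) --- is asserted rather than proved; closing it up under the induction would run into the same \(\Theta\)-type obstructions, and it presupposes a standard identification whose existence is part of what is being established. The paper sidesteps this entirely: writing \(X=D^n\cup_{h'}W\) and \(P=D^n\cup_{h''}W'\) with \(W\) a collar neighbourhood of \(\partial X\), one only has to extend the discrepancy \(f^{-1}\circ h''\circ h'^{-1}\), a diffeomorphism of the inner boundary sphere of \(W\), over the collar \(W\cong\partial X\times[0,1]\) rather than over \(D^n\); after isotoping it to be supported over the interior of \(F_m\) (away from the faces that must be respected), the extension is simply a product with the identity in the collar direction, and no extension over a disc is ever required. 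You should replace both of your problematic steps by these arguments.
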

\begin{proof}
  We prove this theorem by induction on the dimension of \(X\).
  If \(X\) is one- or zero-dimensional then there is nothing to prove.
  Therefore assume that \(X\) is at least two-dimensional.

  As a first step we construct a diffeomorphism \(f':\phi^{-1}(F_{k+1})\rightarrow F_{k+1}\) which is equal to \(f\) near \(\phi^{-1}(F_1)\cup\dots\cup\phi^{-1}(F_k)\) and such that the map induced by \(f'\) on the face-poset of \(\phi^{-1}(F_{k+1})\) is equal to \(\phi|_{\mathcal{P}(\phi^{-1}(F_{k+1}))}\).
  At first assume that \(k+1<m\) then the existence of \(f'\) follows from the induction hypothesis.

  Therefore assume that \(k+1=m\).
  Then we may choose neighborhoods \(W\), \(W'\) in \(\phi^{-1}(F_{k+1})\) and \(F_{k+1}\)  of the boundary of \(\phi^{-1}(F_{k+1})\) and \(F_{k+1}\), respectively, such that \(W\), \(W'\) are nice manifolds with corners and
  \begin{align*}
    \phi^{-1}(F_{k+1})&=W\cup_h D^{n-1}& F_{k+1}&=W'\cup_{h'}D^{n-1},
  \end{align*}
  and \(f\) maps \(W\) diffeomorphically onto \(W'\).
  Here \(h\) and \(h'\) are diffeomorphisms of the \(n-2\)-dimensional sphere onto some component of the boundary of \(W\), \(W'\), respectively.

Therefore \(f\) extends to a diffeomorhism \(f':\phi^{-1}(F_{k+1})\rightarrow F_{k+1}\) if \(h'^{-1}\circ f\circ h :S^{n-2}\rightarrow S^{n-2}\) extends to a diffeomorphism of \(D^{n-1}\).
Now after smoothing the corners \(V=W\cup \bigcup_{i=1}^k \phi^{-1}(F_i)\) is diffeomorphic to \(D^{n-1}\).
Moreover, \(V\cup_{h}D^{n-1}\)
and \(V\cup_{f^{-1}\circ h'} D^{n-1}\) are diffeomorphic to the boundaries of \(X\) and \(P\), respectively.
Because these boundaries are after smoothing the corners diffeomorphic to \(S^{n-1}\), if follows that \(h'^{-1}\circ f\circ h\) extends to a diffeomorphism of \(D^{n-1}\).

As our second step in the proof we extend \(f\cup f'\) to a diffeomorphism of a neighborhood of \(\phi^{-1}(F_1)\cup\dots\cup \phi^{-1}(F_{k+1})\).
At first choose collars \(C_i\)  of \(\phi^{-1}(F_i)\), \(i=1,\dots,k\), in \(X\).
Then by \(f(C_i)\) there is given a collar of \(F_i\) in \(P\).

Let \(W=\bigcup_{i=1}^k C_i\) and \(W'=\bigcup_{i=1}^kf(C_i)\) both with all corners which do not lie on the boundary of \(X\) or \(P\) smoothed.
Then \(W\) and \(W'\) have exactly one facet which does not lie on the boundary of \(X\) and \(P\), respectively.
Next choose a collar \(k:\phi^{-1}(F_{k+1})-\mathring{W}\times [0,1] \rightarrow C_{k+1}\subset X-\mathring{W}\) of \(\phi^{-1}(F_{k+1})-\mathring{W}\) in \(X-\mathring{W}\).
Here \(\mathring{W}\) denotes \(W\) with the facet, which does not lie on the boundary of \(X\), removed.
Then
\begin{equation*}
  f\circ k \circ (f'^{-1}\times \id_{[0,1]}):((F_{k+1}-f(\mathring{W}))\cap f(W))\times [0,1]\rightarrow (P-f(\mathring{W}))\cap f(W)
\end{equation*}
 is a collar of \((F_{k+1}-f(\mathring{W}))\cap f(W) \) in \((P-f(\mathring{W}))\cap f(W)\).
We can extend this collar to a collar \(k':(F_{k+1}-f(\mathring{W}))\times [0,1]\rightarrow P-f(\mathring{W})\).

Now we can extend \(f\cup f'\) to \(W\cup C_{k+1}\) by \(f\cup k'\circ (f'\times \id_{[0,1]})\circ k^{-1}\).
So we have extended \(f\) to a neighborhood of \(\phi^{-1}(F_1)\cup\dots\cup\phi^{-1}(F_{k+1})\).

By iterating these two steps we may assume that \(f\) is defined on a neighborhood of the boundary of \(X\).

So we may choose neighborhoods \(W\) and \(W'\) of the boundaries of \(X\) and \(P\) such that:
\begin{enumerate}
\item \(W\) and \(W'\) are nice manifolds with corners.
\item \(f\) restricted to \(W\) is a diffeomorphism onto \(W'\).
\item There is a homeomorphism \(h:\partial X\times [0,1]\rightarrow W\) which is a diffeomorphism outside a small neighborhood of \(X_2\times [0,1]\), where \(X_2\) is the union of all faces of \(X\) which have at least codimension two.
Moreover, \(\partial W\) has two components \(h(\partial X \times \{0\})\) and  \(h(\partial X \times \{1\})\).
\item \(X=D^n\cup_{h'}W\) and \(P=D^n\cup_{h''}W'\), where \(h'\) and \(h''\) are diffeomorphisms of \(S^{n-1}\) onto the components  \(h(\partial X \times \{1\})\) and  \(f\circ h(\partial X \times \{1\})\) of \(\partial W\) and \(\partial W'\), respectively.
\end{enumerate}
By the last equality, we have \(P=D^n\cup_{f^{-1}\circ h''} W\).
Moreover, \(X\) and \(P\) are diffeomorphic if \(f^{-1}\circ h''\circ h'^{-1}\) can be extend to a diffeomorphism of \(W\).
If this extension is the identity on the neighborhood of \(\phi^{-1}(F_1)\cup\dots\cup\phi^{-1}(F_k)\), then there is a diffeomorphism \(X\rightarrow P\) which is equal to \(f\) on this neighborhood.

The manifold on which \(f^{-1}\circ h''\circ h'^{-1}\) is defined is diffeomorphic to the standard sphere.
Therefore it can be isotoped to a diffeomorphism \(h'''\) which is the identity on a neighborhood of \(\bigcup_{i=1}^{m-1}F_i\times \{1\}\) and on the set where \(h^{-1}\) is not a diffeomorphism.
Let \(h_1=h|_{\partial X\times \{1\}}\). Then by \(h\circ((h_1^{-1}\circ h'''\circ h_1)\times \id_{[0,1]})\circ h^{-1}\) there is given a diffeomorphism of \(W\) which extends \(h'''\) and is the identity on a neighborhood of  \(\phi^{-1}(F_1)\cup\dots\cup\phi^{-1}(F_k)\).

Therefore the theorem follows.
\end{proof}

\begin{corollary}
\label{sec:class-quas-manif-1}
  Let \(X\) be a nice manifold with corners as in Theorem \ref{sec:class-quas-manif} and \(P\) be a simple polytope of the same dimension as \(X\).
  If there is an isomorphism \(\phi:\mathcal{P}(X)\rightarrow \mathcal{P}(P)\).
  Then there is a diffeomorphism \(f:X\rightarrow P\) such that \(f(F)=\phi(F)\) for each face of \(X\).
\end{corollary}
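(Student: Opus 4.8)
The plan is to derive the corollary directly from Theorem~\ref{sec:class-quas-manif} by checking its hypothesis~(2) in the simplest possible case, namely $k=1$: then (2) only asks for a face-respecting diffeomorphism between a neighbourhood of one facet $\phi^{-1}(F_1)$ of $X$ and a neighbourhood of the corresponding facet $F_1$ of $P$. Since a facet is one dimension lower, I would run this as an induction on $n=\dim X$; the cases $n\le 1$ are immediate, $X$ and $P$ being then a point or a segment, and Theorem~\ref{sec:class-quas-manif} is already available in every dimension.

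Assume $n\ge 2$ and fix a line shelling $F_1,\dots,F_m$ of $P$ (line shellings exist for every polytope, and $m\ge 2$). The facet $\phi^{-1}(F_1)$ is an $(n-1)$-dimensional nice manifold with corners each of whose $i$-dimensional faces, $1\le i\le n-1$, is an $i$-dimensional face of $X$ and hence diffeomorphic to $D^i$ after smoothing the corners; and $\phi$ restricts to an isomorphism $\mathcal{P}(\phi^{-1}(F_1))\to\mathcal{P}(F_1)$ onto the face poset of the simple $(n-1)$-polytope $F_1$. So $\phi^{-1}(F_1)$ and $F_1$ satisfy the hypotheses of the corollary in dimension $n-1$, and by the induction hypothesis there is a diffeomorphism $f_1\colon\phi^{-1}(F_1)\to F_1$ with $f_1(F)=\phi(F)$ for every face $F$ of $\phi^{-1}(F_1)$. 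To pass from $f_1$ to a diffeomorphism $f$ between an open neighbourhood of $\phi^{-1}(F_1)$ in $X$ and one of $F_1$ in $P$, I would thicken with collars: choose collars $c\colon\phi^{-1}(F_1)\times[0,1)\hookrightarrow X$ and $c'\colon F_1\times[0,1)\hookrightarrow P$ compatible with the corner structures — so that $c$ carries $G\times[0,1)$ into the facet of $X$ determined by $G$, for every facet $G$ of $\phi^{-1}(F_1)$, and likewise for $c'$ — and set $f=c'\circ(f_1\times\id_{[0,1)})\circ c^{-1}$. Then $f$ is a diffeomorphism of manifolds with corners, and for a face $F$ of $X$ contained in $\phi^{-1}(F_1)$ — which is the same thing as a face of the facet $\phi^{-1}(F_1)$ — one still has $f(F)=f_1(F)=\phi(F)$. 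Hence the data $X$, $P$, $\phi$, the shelling $F_1,\dots,F_m$, $k=1$, and $f$ satisfy all the hypotheses of Theorem~\ref{sec:class-quas-manif}, which then delivers the desired diffeomorphism $g\colon X\to P$ with $g(F)=\phi(F)$ on all faces.

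I expect the only delicate point to be the existence of the compatible collars $c$ and $c'$, i.e.\ collars of a facet of a manifold with faces that behave correctly along all the lower-dimensional faces; this is standard for manifolds with faces — one can build them inductively over the faces ordered by dimension, or straighten the angles with respect to an adapted metric — but it is where care is needed if $f$ is to be a genuine diffeomorphism of manifolds with corners rather than merely of the underlying smooth manifolds. Everything else is formal bookkeeping with face posets.
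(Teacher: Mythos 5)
Your proposal is correct and follows essentially the same route as the paper's own proof: induction on dimension, a facet diffeomorphism supplied by the induction hypothesis, extension to a collar neighbourhood respecting the corner structure, and then an appeal to Theorem~\ref{sec:class-quas-manif} with $k=1$. The paper's version is terser but identical in substance, and you correctly identify the compatible-collar construction as the only point requiring care.
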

\begin{proof}
  We prove this statement by induction on the dimension of \(X\).
  If \(X\) is zero- or one-dimensional then there is nothing to prove.
  So assume that \(X\) has dimension at least two.
  Then by the induction hypothesis there is a diffeomorphism of a facet \(F\) of \(X\) onto a facet of \(P\).
  This diffeomorphism can be extended to a diffeomorphism of a collar of \(F\) onto a collar of \(\phi(F)\).
  Now the statement follows from Theorem~\ref{sec:class-quas-manif}.
\end{proof}

\begin{corollary}
  Two combinatorially equivalent polytopes are diffeomorphic as manifolds with corners.
\end{corollary}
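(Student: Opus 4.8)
The plan is to obtain this statement as an immediate consequence of Corollary~\ref{sec:class-quas-manif-1}, by letting one of the two polytopes play the role of the nice manifold with corners \(X\) and the other play the role of the simple polytope \(P\). Observe first that the statement only has content for \emph{simple} polytopes: a polytope is a manifold with corners precisely when each of its vertices lies in exactly \(n\) facets, i.e.\ when it is simple, and simplicity is a combinatorial invariant, so two combinatorially equivalent polytopes are either both simple or both non-simple.

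So let \(P_1\) and \(P_2\) be combinatorially equivalent simple polytopes of common dimension \(n\), and fix an isomorphism \(\phi\colon\mathcal{P}(P_1)\to\mathcal{P}(P_2)\) of their face-posets. I would apply Corollary~\ref{sec:class-quas-manif-1} with \(X=P_1\), viewed as a nice manifold with corners, and \(P=P_2\). For this I must check that \(P_1\) satisfies the hypothesis of Theorem~\ref{sec:class-quas-manif}, namely that for \(1\leq i\leq n\) every \(i\)-dimensional face of \(P_1\) becomes, after smoothing the corners, diffeomorphic to the standard disc \(D^i\). Since every face of a simple polytope is itself a simple polytope, it is enough to show that an arbitrary \(i\)-dimensional simple polytope \(Q\), with its corners smoothed, is diffeomorphic to \(D^i\). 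This holds because smoothing the corners of \(Q\subset\R^i\) (in a small neighbourhood of the union of the faces of codimension at least two) yields a smooth compact domain that is star-shaped with respect to an interior point, and a smooth compact star-shaped domain in \(\R^i\) is diffeomorphic to \(D^i\).

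Granting this, Corollary~\ref{sec:class-quas-manif-1} supplies a diffeomorphism \(f\colon P_1\to P_2\) with \(f(F)=\phi(F)\) for every face \(F\) of \(P_1\); in particular \(f\) respects the corner stratifications, so it is a diffeomorphism of manifolds with corners, which is the assertion.

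The only step that is not purely formal is the verification that a corner-smoothed simple polytope is diffeomorphic to a disc, and this is where the main (albeit standard) work sits: one must both ensure that the smoothing is well defined up to diffeomorphism and identify the resulting smooth domain with the standard disc, for instance via the gradient flow of a suitable convex function, or by exhibiting a handle decomposition with a single \(0\)-handle using a Morse-theoretic argument. All the remaining content is already contained in Theorem~\ref{sec:class-quas-manif} and Corollary~\ref{sec:class-quas-manif-1}.
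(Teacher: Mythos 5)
Your proof is correct and follows the route the paper intends: the corollary is stated as an immediate consequence of Corollary~\ref{sec:class-quas-manif-1}, applied with one polytope playing the role of the nice manifold with corners $X$ and the other the role of $P$. Your additional verification that every face of a simple polytope is, after smoothing corners, diffeomorphic to a disc (via convexity) correctly fills in the hypothesis of Theorem~\ref{sec:class-quas-manif} that the paper leaves implicit.
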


Next we want to discuss the question how much the assumptions of Theorem \ref{sec:class-quas-manif} on the diffeomorphism type of the faces of \(X\) may be weakened.
In all dimensions except \(4\) and \(5\) it is known that \(D^n\) has exactly one smooth structure.

Assume that \(X\) is a \(5\)-dimensional nice manifold with corners which is homeomorphic to \(D^5\) such that all faces of codimension at least one of \(X\) are after smoothing the corners diffeomorphic to discs.
If \(X\) is combinatorially equivalent to a polytope \(P\), then it follows from the proof of Theorem \ref{sec:class-quas-manif} that a neighborhood of the boundary of \(X\) is diffeomorphic to a neighborhood of the boundary of \(P\).
Therefore after smoothing the corners the boundary of \(X\) is diffeomorphic to the standard four-dimensional sphere.
Since a manifold homeomorphic to \(D^5\), whose boundary is diffeomorphic to \(S^4\), is diffeomorphic to \(D^5\), it follows that \(X\) is after smoothing the corners diffeomorphic to \(D^5\).

Therefore the assumptions in Theorem \ref{sec:class-quas-manif} on the diffeomorphism type of the faces of \(X\) can be relaxed to:
\begin{itemize}
\item All faces of \(X\) are homeomorphic to a disc.
\item The four-dimensional faces of \(X\) are after smoothing the corners diffeomorphic to the four-dimensional disc.
\end{itemize}

This shows that if one wants to construct exotic smooth structures on a polytope then one has to change the diffeomorphism type of the four-dimensional faces.
Actually we can prove the following theorem.

\begin{theorem}
\label{sec:class-quas-manif-2}
  Let \(P\) be a four-dimensional simple polytope.
  Then there is a bijection between the smooth structures on \(P\) up to diffeomorphism and the smooth structures on \(D^4\) up to diffeomorphism.
\end{theorem}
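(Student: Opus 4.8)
The plan is to show that prescribing a smooth structure on $P$ is the same as prescribing the (forced) standard germ along $\partial P$ together with a smooth $4$-manifold homeomorphic to $D^4$ glued into the interior, and that this last piece of data is exactly a smooth structure on $D^4$. Accordingly I would construct mutually inverse maps $\Psi$ from smooth structures on $P$ to smooth structures on $D^4$ and $\Phi$ in the reverse direction.

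For the key input, let $X$ be a $4$-dimensional nice manifold with corners combinatorially equivalent and homeomorphic to $P$, representing a smooth structure on $P$. Every face of $X$ of codimension $\geq 1$ has dimension $\leq 3$, hence after smoothing the corners is the standard disc. Therefore the proof of Theorem \ref{sec:class-quas-manif} goes through up to, but not including, the final step in which a smooth structure on $X$ itself is used; it produces a neighborhood $W\subset X$ of $\partial X$, a neighborhood $W'\subset P$ of $\partial P$, both nice manifolds with corners, and a diffeomorphism $f\colon W\to W'$ carrying each face of $X$ to the corresponding face of $P$ (for $5$-dimensional $X$ this is precisely the step used in the discussion of the five-dimensional case above). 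In particular, after smoothing the corners, $\partial X\cong\partial P\cong S^3$, so $\Sigma_X:=X\setminus\mathring{W}$ is a smooth compact corner-free $4$-manifold whose boundary is the standard $S^3$; since $X$ is homeomorphic to $P$ by a face-preserving homeomorphism, $\Sigma_X$ is homeomorphic to $D^4$ and so represents a smooth structure on $D^4$. In contrast to the five-dimensional situation, where one could further conclude $\Sigma_X\cong D^5$, here $\Sigma_X$ need only be homeomorphic to $D^4$, and this residual ambiguity is exactly the datum we want to capture. By uniqueness of collars the diffeomorphism type of $\Sigma_X$ is independent of the choice of $W$ and depends only on the face-preserving diffeomorphism class of $X$, so we may set $\Psi(X)=\Sigma_X$.

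Conversely, let $\Sigma$ be a smooth $4$-manifold homeomorphic to $D^4$; its boundary is a homotopy $3$-sphere, hence the standard $S^3$. Fix once and for all an orientation-preserving smooth embedding $D^4\hookrightarrow\mathring{P}$ and put
\[
  \Phi(\Sigma)=\bigl(P\setminus\mathring{D}^4\bigr)\cup_{g}\Sigma,
\]
where $g$ is an orientation-reversing diffeomorphism of $\partial\Sigma$ onto $\partial D^4$. All corners of $P\setminus\mathring{D}^4$ lie on $\partial P$, so $\Phi(\Sigma)$ is a nice manifold with corners combinatorially equivalent to $P$; gluing a topological $4$-disc back in exhibits a homeomorphism $\Phi(\Sigma)\to P$, so it does represent a smooth structure on $P$. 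That $\Phi(\Sigma)$ is independent of the choices is the standard disc-theorem argument: any two smoothly embedded $4$-balls in $\mathring{P}$ are ambient isotopic, and any two admissible $g$ differ by an element of $\pi_0\Diff^+(S^3)$, which vanishes by Cerf's theorem; hence $\Phi$ is well defined up to face-preserving diffeomorphism.

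It remains to check that $\Psi$ and $\Phi$ are mutually inverse. Both verifications are elementary cut-and-reglue arguments: removing a standard boundary collar from $\Phi(\Sigma)$ recovers $\Sigma$ up to an invisible annulus $S^3\times[0,1]$, while $\Phi(\Psi(X))$ is reassembled out of $X$ by recognizing $P\setminus\mathring{D}^4$ as a collar on $\partial P$ --- which, by the first step, is diffeomorphic rel $\partial P$ to the boundary neighborhood $W$ of $X$ --- any discrepancy between the gluings of the intervening $3$-spheres being absorbed using $\pi_0\Diff^+(S^3)=0$ and the fact that such diffeomorphisms of $\partial\Sigma_X$ extend over $\Sigma_X$. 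The step I expect to demand the most care is the first one: making precise that the proof of Theorem \ref{sec:class-quas-manif} does deliver a face-preserving diffeomorphism on a whole neighborhood of $\partial X$ from the standardness of the proper faces alone, and that this neighborhood can be arranged to be compatible with the collar-uniqueness and isotopy-extension arguments needed for the well-definedness claims; granting that, the rest reduces to routine gluing together with the two low-dimensional facts used throughout, namely that $S^3$ has a unique smooth structure and that $\pi_0\Diff^+(S^3)$ is trivial.
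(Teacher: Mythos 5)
Your proposal is correct and follows essentially the same route as the paper: the boundary-neighborhood diffeomorphism extracted from the proof of Theorem~\ref{sec:class-quas-manif}, the identification $\partial D_i\cong S^3$ via the Poincar\'e conjecture, extension of the resulting gluing diffeomorphism of $S^3$ over the collar, and interior disc replacement for surjectivity. The only difference is organizational --- you package the argument as explicit mutually inverse maps $\Psi$ (remove a boundary collar / smooth the corners) and $\Phi$ (replace an interior disc), whereas the paper proves injectivity and surjectivity of the corner-smoothing map directly.
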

\begin{proof}
  Let \(P_1,P_2=P\) equipped with different smooth structures.
  Then, by Corollary~\ref{sec:class-quas-manif-1}, there is a diffeomorphism of a facet of \(P_1\) onto a facet of \(P_2\).
  As in the proofs of Corollary~\ref{sec:class-quas-manif-1} and Theorem~\ref{sec:class-quas-manif} we can extend this diffeomorphism to a diffeomorphism of a neighborhood \(W\) of the boundary of \(P_1\) onto a neighborhood of the boundary of \(P_2\).
Let \(D_1\) and \(D_2\) be \(P_1\) and \(P_2\) with corners smoothed, respectively.

Then we have \(P_i=D_i\cup_{f_i}W\), \(i=1,2\), with some diffeomorphisms \(f_i\) of the boundary of \(D_i\) onto a component of the boundary of \(W\).
Since by the Poincar\'e conjecture \(\partial D_i\) is diffeomorphic to \(S^3\) it follows as in the proof of Theorem \ref{sec:class-quas-manif} that \(f_1\circ f_2^{-1}\) extends to a diffeomorphism of \(W\).

Therefore \(P_1\) and \(P_2\) are diffeomorphic if \(D_1\) and \(D_2\) are diffeomorphic.

It remains to prove that for each manifold \(D\) homeomorphic to \(D^4\) there is a smooth structure \(P_1\) on \(P\) such that after smoothing the corners \(P_1\) is diffeomorphic to \(D\).
Such a \(P_1\) can be constructed by removing a disc from the interior of \(P\) and replacing it by \(D\).

Therefore the theorem is proved.
\end{proof}

\begin{theorem}
  Let \(P\) be an \(n\)-dimensional simple polytope with \(n\geq 4\).
  Then there are more than one smooth structures on \(P\) if and only if there are more than one smooth structures on \(D^4\).
\end{theorem}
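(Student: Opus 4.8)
The plan is to prove the two implications separately. The direction ``more than one smooth structure on \(D^4\) \(\Rightarrow\) more than one smooth structure on \(P\)'' will be proved, as in the case \(n=4\), by planting an exotic \(D^4\) inside a four-dimensional face of \(P\); the converse will follow by contraposition from the relaxed form of Theorem~\ref{sec:class-quas-manif} discussed above. For \(n=4\) the argument reduces to Theorem~\ref{sec:class-quas-manif-2}.

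For the ``only if'' direction, suppose \(D^4\) carries a unique smooth structure and let \(P'\) be an arbitrary smooth structure on \(P\), i.e.\ a smooth nice manifold with corners together with a face-preserving homeomorphism \(P'\to P\). This homeomorphism is in particular a combinatorial equivalence, and since every face of the simple polytope \(P\) is (convex, hence) homeomorphic to a disc, every face of \(P'\) is homeomorphic to a disc; in particular every four-dimensional face of \(P'\) is homeomorphic to \(D^4\) and hence, by our assumption, diffeomorphic to \(D^4\) after smoothing the corners. By the relaxed form of Theorem~\ref{sec:class-quas-manif}, \(P'\) is diffeomorphic to \(P\). Thus \(P\) has a unique smooth structure.

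For the ``if'' direction, let \(D\) be a smooth manifold homeomorphic but not diffeomorphic to \(D^4\); then \(\partial D\) is a smooth \(3\)-manifold homeomorphic, hence diffeomorphic, to \(S^3\). Since \(n\geq 4\) and \(P\) is simple, \(P\) has a four-dimensional face \(F\). Choose a small closed ball \(B\) in the relative interior of \(F\). Using the local model of a manifold with corners near a point of codimension \(n-4\), a neighbourhood of \(B\) in \(P\) is diffeomorphic to \(\mathring{B}'\times[0,1)^{\,n-4}\), with \(B\subset\mathring{B}'\cong\R^4\) and \(F\) corresponding to \(\mathring{B}'\times\{0\}\). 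I would then set
\[
  P_1 \;=\; \bigl(P-(\mathring{B}\times[0,1)^{\,n-4})\bigr)\ \cup\ \bigl(D\times[0,1)^{\,n-4}\bigr),
\]
gluing \(\partial D\times[0,1)^{\,n-4}\) to \(\partial B\times[0,1)^{\,n-4}=S^3\times[0,1)^{\,n-4}\) via a diffeomorphism \(\partial D\to S^3\). One then checks that \(P_1\) is a nice manifold with corners combinatorially equivalent to \(P\); the face \(F_1\) of \(P_1\) corresponding to \(F\) is \((F-\mathring{B})\cup_{\partial D}D\), which after smoothing the corners becomes \((D^4-\mathring{B})\cup_{S^3}D\cong D\), exactly as in the proof of Theorem~\ref{sec:class-quas-manif-2}. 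Since \(D\) is homeomorphic to \(D^4\), the Alexander trick provides a face-preserving homeomorphism \(P_1\to P\), so \(P_1\) is a smooth structure on \(P\). Finally, any diffeomorphism \(P_1\to P\) would preserve the codimension function \(c\), hence carry \(F_1\) onto a four-dimensional face of \(P\) by a diffeomorphism of manifolds with corners; smoothing the corners would give \(D\cong D^4\), a contradiction. Hence \(P_1\) is a smooth structure on \(P\) not diffeomorphic to the standard one.

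The step requiring the most care is the construction of \(P_1\). The exoticness of \(D\) is not localized at a point — it lies in the shell between two concentric spheres — so the modification must take the form ``remove a disc, glue in \(D\)'' rather than an insertion of an exotic \(\R^4\); one must verify that thickening this modification over the product neighbourhood \(\mathring{B}'\times[0,1)^{\,n-4}\) changes neither the face poset nor the homeomorphism type of \(P\), and that it genuinely alters the smoothed diffeomorphism type of the face \(F\). The remaining ingredients — the smooth Poincar\'e conjecture in dimension three for \(\partial D\), the topological Alexander trick, and the relaxed form of Theorem~\ref{sec:class-quas-manif} — enter only off the shelf.
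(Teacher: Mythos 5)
Your ``only if'' direction is correct and is exactly the paper's argument: if \(D^4\) has a unique smooth structure, the relaxed hypotheses of Theorem~\ref{sec:class-quas-manif} apply to any smooth structure on \(P\), so \(P\) is smoothly rigid.

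The ``if'' direction, however, has a genuine gap, and it sits precisely in the step you flag as ``requiring the most care.'' Your replacement region is a product \(B\times[0,1)^{n-4}\) (or, to make the surgery well defined, a compact version \(B\times[0,1]^{n-4}\)), and you propose to glue \(D\times[0,1]^{n-4}\) back in along \(\partial D\times[0,1]^{n-4}\cong S^3\times[0,1]^{n-4}\) only. But the frontier of the removed set inside \(P\) is not just \(\partial B\times[0,1]^{n-4}\): it also contains the ``top'' \(B\times\partial^{+}\bigl([0,1]^{n-4}\bigr)\), where \(\partial^{+}\) denotes the faces of the cube not meeting the \(0\)-faces, i.e.\ the part of the frontier lying in the interior of \(P\). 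To close the construction up smoothly there you would have to glue \(D\times\partial^{+}\) to \(B\times\partial^{+}\) via a diffeomorphism \(D\to B\cong D^4\) — which is exactly what does not exist when \(D\) is exotic. (Leaving the cube half-open does not help: the closure of \(\mathring{B}\times[0,1)^{n-4}\) in the compact space \(P\) then acquires limit points outside the chart, and the complement fails to be a manifold with corners near them.) This is the whole difficulty, and it is why the paper does not use a product thickening. Instead it first builds a five-dimensional nice manifold with corners \(D'\), homeomorphic to \(D^5\), with exactly two facets, one diffeomorphic to \(D\) and one to \(D^4\); the region actually excised and replaced is \(S_D=D'\times\Delta^{n-5}\cup_{D\times\Delta^{n-5}}D\times\Delta^{n-4}\), whose relevant outer boundary piece \(D^4\times\Delta^{n-5}\cup\partial D\times\Delta^{n-4}\) is \emph{independent of} \(D\), so that \(S_{D^4}\) can be removed from \(P\) and \(S_D\) glued in along a standard collar. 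The interpolating object \(D'\) is the missing idea in your construction; without it (or some equivalent device for making the outer wall of the replaced region standard) the surgery cannot be performed smoothly. Your concluding step — that a diffeomorphism \(P_1\to P\) must preserve the face stratification and would force \(D\cong D^4\) — is fine once \(P_1\) exists.
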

\begin{proof}
  If \(n=4\), then this follows from Theorem \ref{sec:class-quas-manif-2}.
  Therefore assume \(n>4\).
  If there is only one smooth structure on \(D^4\), then it follows from Theorem \ref{sec:class-quas-manif} and the remarks there after that there is only one smooth structure on \(P\).

  That there are more than one smooth structures on \(P\) if there are more than one smooth structures on \(D^4\) follows from the construction below.
Let \(D\) be a manifold homeomorphic to \(D^4\).
Then there is a five-dimensional nice manifold with corners \(D'\) such that \(D'\) is homeomorphic to \(D^5\) and has exactly two facets one of them diffeomorphic to \(D\) the other diffeomorphic to \(D^4\).
Denote \(D'\times \Delta^{n-5}\cup_{D\times \Delta^{n-5}} D\times \Delta^{n-4}\) by \(S_D\).
Then the diffeomorphism type of \(D^4\times\Delta^{n-5}\cup \partial D\times \Delta^{n-4}\subset S_D\) is independent of \(D\).
Moreover, \(S_{D^4}\) is diffeomorphic to \(D^4\times \Delta^{n-4}\) with some corners smoothed.

Now embed \(D^4\) into the interior of a four-dimensional face of \(P\).
Then this embedding extends to an embedding of \(S_{D^4}\).
Now remove \(S_{D^4}\) from \(P\) and replace it by \(S_D\).
The resulting manifold with corners is face-preserving homeomorphic to \(P\) and has a four-dimensional face which is after smoothing the corners diffeomorphic to \(D\).
Therefore it cannot be diffeomorphic to \(P\) if \(D\) is not diffeomorphic to \(D^4\).
\end{proof}

Now we turn to the question how many equivariant smooth structures there are on a quasitoric manifold.

Let \(P\) be a simple polytope and \(M\) be a quasitoric manifold over \(P\).
Denote by \(\lambda\) the map which assigns to each point \(x\) of \(P\) the isotropy group of a point in the \(T\)-orbit in \(M\) corresponding to \(x\).
Then \(\lambda\) is constant on the interior of each face \(F\) of \(P\).
And we define \(\lambda(F)=\lambda(x)\) for \(x\) an interior point of \(F\).

It has been shown by Davis and Januszkiewicz \cite[Proposition 1.8, p. 424]{0733.52006} that \(M\) is equivariantly homeomorphic to \(M(P,\lambda)=(P\times T)/\!\!\sim\) with \((x_1,t_1)\sim (x_2,t_2)\) if and only if \(x_1=x_2\) and \(t_1 t_2^{-1}\in \lambda(x_1)\). 
Here we want to show that the smooth structures on \(P\) are one-to-one to the equivariant smooth structures on \(M(P,\lambda)\).

For the proof we use the theory of \(T\)-normal systems developed by Davis in \cite{0403.57002}.
There it has been shown that the isomorphism types of \(T\)-normal systems are one-to-one to the diffeomorphism types of \(T\)-manifolds.

A \(T\)-normal system corresponding to a quasitoric manifold \(M\) (or more generally to a locally standard torus manifold) consists out of the following data (we use the same notation as in \cite[Definition 4.1, p.352]{0403.57002}):
\begin{enumerate}
\item A closed set \(J\) of normal \(T\)-orbit types.
\item For each \(\alpha\in J\) a principal \(T\)-bundle \(P_\alpha\) over a manifold with \(J_\alpha\)-faces \(B_\alpha\). Here \(J_\alpha=\{\beta\in J;\; \beta<\alpha\}\).
\item For each pair \((\alpha,\beta)\in J\times J\) with \(\beta>\alpha\), an isomorphism of \(T\)-bundles:
  \begin{equation*}
    \theta_{\alpha,\beta}:\partial_\alpha P^\alpha_\beta\times_T P_\alpha\rightarrow \partial_\alpha P_\beta.
  \end{equation*}
These isomorphisms satisfy a certain compatibility condition.
\end{enumerate}

The set of base spaces \(\{B_\alpha;\;\alpha\in J\}\) together with the maps induced by the \(\theta_{\alpha,\beta}\) on the base spaces form a \(B\)-normal system which descripes the diffeomorphism type of \(M/T\).

At first we indicate how to construct from a polytope \(P\) (equipped with some smooth structure) and a characteristic map \(\lambda\) a quasitoric manifold \(M\) such that \(M\) is equivariantly homeomorphic to \(M(P,\lambda)\) and \(M/T\) is diffeomorphic to \(P\).

At first note that the normal orbit type of a point in a quasitoric manifold \(M\) is completely determined by the map \(\lambda\).
Therefore we take as our set \(J\) the set of these normal orbit types.

We construct the manifolds \(B_\alpha\) by removing minimal strata from \(P\), repeatedly.
The \(B_\alpha\) constructed in this way are contractible.
Therefore all the \(T\)-bundles \(P_\alpha\) are trivial.
Moreover, from this iteration we get diffeomorphisms \(\theta'_{\alpha,\beta}:\partial_\alpha B^{\alpha}_\beta\times B_\alpha\rightarrow \partial_\alpha B_\beta\).

We define \(\theta_{\alpha,\beta}\) by the composition of the natural isomorphism
  \(\partial_\alpha P^\alpha_\beta\times_T P_\alpha\rightarrow\partial_\alpha B^\alpha_\beta\times B_\alpha\times T\) and \(\theta'_{\alpha,\beta}\times \id_T\).
Then the compatibility conditions are automatically satisfied.

This \(T\)-normal system corresponds to a quasitoric manifold homeomorphic to \(M(P,\lambda)\).

Next we want to show that two quasitoric manifolds \(M_1\) and \(M_2\) are equivariantly  diffeomorphic if both manifolds are equivariantly homeomorphic to \(M(P,\lambda)\) and the \(T\)-actions on \(M_1\) and \(M_2\) induce the same differentiable structure on \(P\).

To do so we consider the \(T\)-normal systems corresponding to \(M_1\) and \(M_2\) and show that they are isomorphic.

All parts of these \(T\)-normal systems are the same and as described above.
The only exception are the isomorphisms \(\theta_{\alpha,\beta}\).

Denote the isomorphisms corresponding to \(M_1\) by \(\theta_{\alpha,\beta}\) and the isomorphisms corresponding to \(M_2\) by \(\theta'_{\alpha,\beta}\).
Since the orbit spaces of \(M_1\) and \(M_2\) are diffeomorphic, the \(T\)-normal systems corresponding to \(M_1\) and \(M_2\) induce isomorphic \(B\)-normal systems.
Therefore we may assume that \(\theta_{\alpha,\beta}\) and \(\theta'_{\alpha,\beta}\) induce the same maps on the base spaces.

We have to find bundle isomorphisms \(\phi_\alpha: P_\alpha\rightarrow P_\alpha\) such that the following diagram commutes
\begin{equation}
\label{eq:app:1}
  \xymatrix{
    \partial_\alpha P_\beta^\alpha \times_T P_\alpha \ar[r]^{\times \phi_\alpha} \ar[d]_{\theta_{\alpha,\beta}}&\partial_\alpha P_\beta^\alpha \times_T P_\alpha \ar[d]^{\theta'_{\alpha,\beta}}\\
    \partial_\alpha P_\beta\ar[r]_{\phi_\beta}&\partial_{\alpha}P_\beta\\}
\end{equation}

We construct the \(\phi_\alpha\) inductively starting with isomorphisms on the minimal strata.
The maps which we will construct in this way will always induce the identity on the base spaces.
When we pass from one stratum to the next one then the upper part of the diagram~(\ref{eq:app:1}) is already defined.
This implies that \(\phi_\beta\) is defined on the boundary of \(P_\beta\) and we have to extend this isomorphism to all of \(P_\beta\).
Note that the isomorphisms of a trivial \(T\)-bundle over the space \(B\) correspond one-to-one to maps \(B\rightarrow T\).
Since the boundary of \(B_\beta\) is always homeomorphic to a sphere, there are now obstructions to extend \(\phi_\beta\) to all of \(P_\beta\) if \(\dim B_\beta\) is greater or equal to three.

So that we only have to define \(\phi_\alpha\) for \(\dim B_\alpha \leq 2\).
At first choose isomorphisms \(P_\alpha\rightarrow P_\alpha\) for all \(\alpha\) with \(\dim B_\alpha=0\).

Next choose a line shelling \(F_1,\dots,F_m\) of \(P\).
Assume that the isomorphisms \(\phi_\alpha\) are defined for all strata up to dimension two in \(\bigcup_{i=1}^kF_i\).
From the shelling we get an ordering of the two dimensional faces \(G_1,\dots,G_{m'}\) of \(F_{k+1}\) such that there are two cases:
\begin{enumerate}
\item \(G_i\cap(\bigcup_{j=1}^{i-1}G_j)\) is contractible.
\item \(G_i\cap(\bigcup_{j=1}^{i-1}G_j)=\partial G_i\) and there is a three dimensional face \(H\) of \(F_{k+1}\) such that \(G_i\subset \partial H \subset \bigcap_{j=1}^iG_j\).
\end{enumerate}
Moreover there is an \(1\leq l\leq m'\) such that \(G_1,\dots,G_l\) are the two-dimensional faces in \(\bigcap_{i=1}^k F_i\cap F_{k+1}\).

Denote by \(\alpha_{G_i}\) the the normal orbit type of points in \(M_1\), \(M_2\) above \(G_i\).
Then \(B_{\alpha_{G_i}}\) is diffeomorphic to \(G_i\) with all vertices cut off.
If \(\phi_{\alpha_{G_i}}\) is defined then this defines also an isomorphism \(\phi_{\alpha_{G'}}\) for \(\dim G' =1\) and \(G'\subset G_i\) because in this case \(\partial_{\alpha_{G'}}B^{\alpha_{G'}}_{\alpha_{G_i}}\) is just one point.

We construct the \(\phi_{\alpha_{G_i}}\)'s inductively by starting with \(G_1\). Assume that the \(\phi_{\alpha_{G_j}}\) are already defined for \(j=1,\dots,i-1\).
When we are in case one then there are no obstructions to define \(\phi_{\alpha_{G_i}}\).
If we are in case two then we have an isomorphism \(\phi: \partial P_{\alpha_{G_i}}\rightarrow \partial P_{\alpha_{G_i}}\) which we want to extend to all of \(P_{\alpha_{G_i}}\).
We identify \(P_{\alpha_{G_i}}\) with a part of the boundary of \(P_{\alpha_{H}}\).
Then \(\phi\) extends to an isomorphism of \(\partial P_{\alpha_H}-P_{\alpha_{G_i}}\) because this complement is a union of the \(\partial_{\alpha_G}P^{\alpha_G}_{\alpha_H}\times_T P_{\alpha_G}\) with \(G\subset H \cap \bigcup_{j=1}^{i-1}G_i\).
Therefore the map from \(\partial B_{\alpha_{G_i}}\rightarrow T\) corresponding to \(\phi\) is null-homotopic.
Hence, \(\phi\) extends to an isomorphism of \(P_{G_i}\).

To conclude this section we state what we have proved:

\begin{theorem}
  \label{sec:smooth-struct-quas}
  Let \(P\) be a polytope and \(\lambda\) a characteristic map defined on \(P\). Then the \(T\)-equivariant smooth structures on \(M(P,\lambda)\) (up to equivariant diffeomorphisms) are one-to-one to the smooth structures on \(P\) (up to diffeomorphisms which are compatible with \(\lambda\)).
\end{theorem}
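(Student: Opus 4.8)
The plan is to assemble the correspondence from the two constructions carried out above in this section. First I would set up the map $\Phi$. Given an equivariant smooth structure $\mathcal D$ on $M(P,\lambda)$, local standardness makes the orbit space $(M(P,\lambda),\mathcal D)/T$ a smooth manifold with corners; its underlying space is canonically $P$, so we obtain a smooth structure $\Phi(\mathcal D)$ on $P$. An equivariant diffeomorphism between two such smooth $T$-manifolds is $T$-equivariant and therefore covers a diffeomorphism of orbit spaces which moreover preserves isotropy groups orbit by orbit, i.e.\ a $\lambda$-compatible diffeomorphism of $P$. Hence $\Phi$ descends to a well-defined map from equivariant smooth structures on $M(P,\lambda)$ (modulo equivariant diffeomorphism) to smooth structures on $P$ (modulo $\lambda$-compatible diffeomorphism), and it remains to prove this map is bijective.

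\emph{Surjectivity} is the first construction above. Starting from a smooth structure on $P$, build the $T$-normal system whose base spaces $B_\alpha$ are obtained by repeatedly deleting minimal strata from $P$ — these $B_\alpha$ are contractible, so every bundle $P_\alpha$ is trivial — and whose gluing isomorphisms $\theta_{\alpha,\beta}$ are induced from the diffeomorphisms $\theta'_{\alpha,\beta}$ of base spaces produced by that iteration, times the identity of $T$; the compatibility conditions then hold automatically. By Davis's classification of $T$-manifolds by $T$-normal systems \cite{0403.57002}, this system is realized by a $T$-manifold $M$. By construction $M$ is equivariantly homeomorphic to $M(P,\lambda)$ and $M/T$ is diffeomorphic to $P$ with the prescribed smooth structure, so transporting the smooth structure of $M$ across that homeomorphism provides an equivariant smooth structure on $M(P,\lambda)$ whose image under $\Phi$ is the chosen class.

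\emph{Injectivity.} Suppose two equivariant smooth structures on $M(P,\lambda)$, with underlying $T$-manifolds $M_1,M_2$, satisfy $\Phi(M_1)=\Phi(M_2)$, witnessed by a $\lambda$-compatible diffeomorphism $g\colon M_1/T\to M_2/T$. Since $g$ is $\lambda$-compatible, $(x,t)\mapsto(g(x),t)$ descends to an equivariant homeomorphism of $M(P,\lambda)$; replacing the second smooth structure by its pullback along this homeomorphism, we may assume $M_1/T=M_2/T$ as smooth manifolds with corners. Now apply the second construction above: $M_1$ and $M_2$ correspond to $T$-normal systems with all data equal except possibly the bundle isomorphisms $\theta_{\alpha,\beta}$ respectively $\theta'_{\alpha,\beta}$; equality of the orbit spaces forces the induced $B$-normal systems to be isomorphic, so after an adjustment $\theta_{\alpha,\beta}$ and $\theta'_{\alpha,\beta}$ induce the same maps on base spaces. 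One then constructs bundle isomorphisms $\phi_\alpha\colon P_\alpha\to P_\alpha$ inducing the identity on base spaces and making the square \eqref{eq:app:1} commute, inductively over the strata ordered by a line shelling $F_1,\dots,F_m$ of $P$: for $\dim B_\alpha\geq 3$ there is no obstruction since $\partial B_\alpha$ is a sphere; for $\dim B_\alpha\in\{0,1\}$ the extension is immediate; and for $\dim B_\alpha=2$ the induced ordering of the two-dimensional faces of a facet $F_{k+1}$ splits the step into the contractible case (no obstruction) and the case where the boundary datum factors through a three-dimensional face and is hence null-homotopic as a map to $T$. The resulting isomorphism of $T$-normal systems is realized by an equivariant diffeomorphism $M_1\to M_2$, so the map is injective.

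I expect the injectivity step — specifically the obstruction-theoretic extension of the $\phi_\alpha$ across the two-dimensional strata — to be the main obstacle, and the only place in the whole argument that is not purely formal; it is exactly there that being a polytope (hence admitting a line shelling that organizes the low-dimensional faces) is used. A secondary point needing care is the precise meaning of ``compatible with $\lambda$'': one must take it strong enough — for instance, a diffeomorphism inducing the identity on $\mathcal P(P)$, equivalently one with $\lambda\circ g=\lambda$ — that such a diffeomorphism of $P$ lifts to an equivariant self-homeomorphism of $M(P,\lambda)$, which is what makes $\Phi$ both well-defined and the reduction used at the start of the injectivity argument legitimate.
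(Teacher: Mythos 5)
Your proposal is correct and follows essentially the same route as the paper: surjectivity via the explicit construction of a $T$-normal system with contractible base spaces $B_\alpha$ (hence trivial bundles $P_\alpha$) from a smooth structure on $P$, and injectivity via the inductive construction of the bundle isomorphisms $\phi_\alpha$ making diagram~(\ref{eq:app:1}) commute, with the line shelling used only to handle the strata of dimension at most two. Your framing as a well-defined map $\Phi$ with the explicit discussion of $\lambda$-compatibility is a useful tightening of what the paper leaves implicit, but the mathematical content is the same.
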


By combining Theorem~\ref{sec:smooth-struct-quas} and Corollary~\ref{sec:class-quas-manif} we get the following corollary.

\begin{corollary}
  Let \(M\) and \(M'\) be two strongly quasitoric manifolds over polytopes \(P\) and \(P'\), respectively.
  If \(P\) and \(P'\) are combinatorially equivalent and \(M\) and \(M'\) give rise to the same characteristic maps, then \(M\) and \(M'\) are equivariantly diffeomorphic.
\end{corollary}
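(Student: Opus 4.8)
The plan is to realize the equivariant diffeomorphism as a composition through the canonical models constructed above. Since \(M\) is strongly quasitoric over \(P\), its orbit space \(M/T\) (with the smooth structure induced from \(M\)) is diffeomorphic to \(P\) with its natural smooth structure, and by the proposition of Davis and Januszkiewicz \cite[Proposition 1.8, p. 424]{0733.52006} \(M\) is equivariantly homeomorphic to \(M(P,\lambda)\). By Theorem~\ref{sec:smooth-struct-quas}, the \(T\)-equivariant smooth structure on \(M(P,\lambda)\) is determined up to equivariant diffeomorphism by the smooth structure it induces on the orbit space; since the construction preceding that theorem yields a model whose orbit space carries exactly the natural smooth structure of \(P\), it follows that \(M\) is equivariantly diffeomorphic to this standard model over \(P\). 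For the same reason \(M'\) is equivariantly diffeomorphic to the standard model built from \(P'\) and \(\lambda'\).

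Next I would feed in the combinatorial equivalence. Let \(\psi\colon\mathcal{P}(P)\to\mathcal{P}(P')\) be an isomorphism of face-posets. By Corollary~\ref{sec:class-quas-manif-1}, applied with the polytope \(P\) in the role of the nice manifold with corners \(X\), there is a diffeomorphism \(g\colon P\to P'\) with \(g(F)=\psi(F)\) for every face \(F\) of \(P\). The hypothesis that \(M\) and \(M'\) give rise to the same characteristic maps means precisely that \(\lambda(F)=\lambda'(\psi(F))=\lambda'(g(F))\) for every face \(F\). Hence \([(x,t)]\mapsto[(g(x),t)]\) is a well-defined \(T\)-equivariant bijection \(M(P,\lambda)\to M(P',\lambda')\) with inverse induced by \(g^{-1}\): if \(t_1t_2^{-1}\in\lambda(x_1)\) for \(x_1\) in the relative interior of a face \(F\), then \(t_1t_2^{-1}\in\lambda'(g(x_1))\). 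Because both models are built, via the \(T\)-normal systems described above, from \(P\) and \(P'\) together with their characteristic data, and \(g\) is a diffeomorphism compatible with these data, \(g\) induces an isomorphism of the corresponding \(T\)-normal systems; by Davis's theorem \cite{0403.57002} this is realized by an equivariant diffeomorphism \(M(P,\lambda)\to M(P',\lambda')\). Composing the three equivariant diffeomorphisms \(M\to M(P,\lambda)\to M(P',\lambda')\to M'\) proves the corollary.

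The delicate point is the middle step: one has to check that \(g\) can be chosen to respect the full characteristic data and not merely the face-posets, so that the induced map of \(T\)-normal systems genuinely exists. This is exactly where the ``compatible with \(\lambda\)'' clause of Theorem~\ref{sec:smooth-struct-quas} and the face-preserving conclusion of Corollary~\ref{sec:class-quas-manif-1} enter. Everything else is a routine unwinding of the definitions of \(M(P,\lambda)\) and of the associated \(T\)-normal system.
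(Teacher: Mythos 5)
Your proposal is correct and follows essentially the same route as the paper, which proves this corollary simply by combining Theorem~\ref{sec:smooth-struct-quas} with Corollary~\ref{sec:class-quas-manif-1}: the latter supplies a face-preserving diffeomorphism \(P\rightarrow P'\) carrying \(\lambda\) to \(\lambda'\), and the former identifies each manifold with the standard model over its (naturally smooth) orbit polytope. Your fleshing-out of the middle step via the \(T\)-normal systems is exactly the intended unwinding, so there is nothing to add.
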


\section{Six-dimensional torus manifolds}
\label{sec:class}

In this section we discuss invariants of simply connected six-dimensional torus manifolds \(M\) with \(H^{{odd}}(M)=0\) which determine \(M\) up to equivariant diffeomorphism.
The results of this section are used in section \ref{sec:tori_diff_six} where the number of conjugacy classes of three-dimensional tori in the diffeomorphism groups of some six-dimensional torus manifolds is determined.

Before we state our first theorem we recall some of the results of \cite{1111.57019}.

If \(M\) is a six-dimensional torus manifold with \(H^{{odd}}(M)=0\), then, by Theorem 4.1 of  \cite[p. 720]{1111.57019}, \(M\) is locally standard and the orbit space \(M/T\) is a nice manifold with corners.
Moreover, all faces of \(M/T\) are acyclic.

We denote by \(\mathcal{P}\) the face poset of \(M/T\) and by 
\begin{equation*}
\lambda:\{\text{faces of } M/T\}\rightarrow \{\text{subtori of } T\}  
\end{equation*}

 the map which assigns to each face of \(M/T\) the isotropy group of an interior point of this face.
This map is called the characteristic map of \(M\).

\begin{theorem}
\label{sec:class-six-dimens}
  Let \(M\) be a six-dimensional torus manifold with \(H^{{odd}}(M)=0\) and \(\pi_1(M)=0\). Then the equivariant diffeomorphism type of \(M\) is uniquely determined by \(\mathcal{P}\) and \(\lambda\).
\end{theorem}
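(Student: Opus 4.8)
The plan is to reduce the statement to an application of Davis's theory of $T$-normal systems (as already used in Section~\ref{sec:app-intro}), combined with the structural results of \cite{1111.57019} recalled above. Since $M$ has $H^{\mathrm{odd}}(M)=0$, it is locally standard, the orbit space $X=M/T$ is a nice manifold with corners, and all faces of $X$ are acyclic. First I would observe that, as in the construction in Section~\ref{sec:app-intro}, the combinatorial data $\mathcal P$ together with $\lambda$ determines the set $J$ of normal orbit types and the faces which serve as base spaces $B_\alpha$; acyclicity of all faces (hence contractibility, since these are manifolds with corners, so after smoothing corners they are homotopy equivalent to CW complexes with trivial homology and, being simply connected by Lemma~\ref{sec:fund-groups-torus-6} applied to faces, actually contractible) forces every principal $T$-bundle $P_\alpha$ over $B_\alpha$ to be trivial. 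Thus the only freedom in the $T$-normal system is (i) the $B$-normal system, i.e.\ the diffeomorphism type of $X$ as a manifold with corners together with its face structure, and (ii) the gluing isomorphisms $\theta_{\alpha,\beta}$, which over trivialized bundles amount to maps $B_\beta^{(\partial)}\to T$.

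The second step is to pin down the diffeomorphism type of $X$. Here I would invoke Corollary~\ref{sec:class-quas-manif-1}: since $X$ is three-dimensional and all its faces (of dimension $1,2,3$) are, after smoothing corners, diffeomorphic to discs (this uses acyclicity of the faces plus the Poincaré conjecture in dimensions $2$ and $3$, so there are no exotic low-dimensional discs and no fake $2$- or $3$-spheres to worry about), the face poset $\mathcal P$ determines $X$ up to a diffeomorphism respecting faces. Actually, one must be slightly careful: Corollary~\ref{sec:class-quas-manif-1} as stated compares $X$ with a \emph{polytope}, so I would first note that any realizable face poset $\mathcal P$ of such an $X$ is the face poset of a simple $3$-polytope $P$ (by Steinitz's theorem, using that the boundary of $X$ is $S^2$ with a polytopal cell structure), and then compare both $X$ and the orbit space of a model quasitoric manifold to $P$. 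This reduces the diffeomorphism type of $X$ to $\mathcal P$ alone.

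The final and main step is to show that, once the $B$-normal systems agree, the gluing data $\theta_{\alpha,\beta}$ can be normalized, i.e.\ the $T$-normal systems of any two such $M_1,M_2$ with the same $(\mathcal P,\lambda)$ are isomorphic. This is exactly the argument carried out in Section~\ref{sec:app-intro} for quasitoric manifolds: one constructs bundle isomorphisms $\phi_\alpha$ inductively over a line shelling of $P$, making the diagram~(\ref{eq:app:1}) commute, and the obstructions to extending $\phi_\beta$ over $P_\beta$ vanish because $\partial B_\beta$ is a sphere and $[\,\partial B_\beta, T\,]$ is trivial once $\dim B_\beta\ge 3$, while the cases $\dim B_\beta\le 2$ are handled by the explicit shelling argument there (case one contractible, case two null-homotopic via a $3$-dimensional face). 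The one point requiring attention, and the main obstacle, is that the argument of Section~\ref{sec:app-intro} was written for $X=M/T$ an actual polytope, whereas here $X$ is only a nice manifold with corners combinatorially equivalent to a polytope $P$; so I would first transport everything across the face-preserving diffeomorphism $X\to P$ from step two, after which the orbit spaces literally coincide and the quasitoric normalization applies verbatim. Hence $M_1$ and $M_2$ are equivariantly diffeomorphic, and the equivariant diffeomorphism type of $M$ depends only on $\mathcal P$ and $\lambda$.
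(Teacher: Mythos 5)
Your overall strategy coincides with the paper's: identify the diffeomorphism type of the orbit space from $\mathcal P$, then normalize the gluing data of the associated $T$-normal system as in Section~\ref{sec:app-intro}. The genuine gap is in your second step, where you claim that the face poset of $X=M/T$ is realized by a simple $3$-polytope via Steinitz's theorem and then invoke Corollary~\ref{sec:class-quas-manif-1}. This is false in general: the boundary of $X$ is $S^2$ with a cell structure, but that cell structure need not be polytopal. For the standard $T^3$-action on $S^6$ the orbit space is a $3$-ball with $3$ facets, $3$ edges and $2$ vertices (its vertex--edge graph is a theta-multigraph, not a simple $3$-connected planar graph), and a simple $3$-polytope has at least $4$ facets; there are also orbit spaces with two facets meeting along a single circle and no vertices at all. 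Both situations occur for manifolds satisfying the hypotheses of the theorem, so your reduction to the polytope case breaks down exactly on these examples. The paper avoids this by proving Lemma~\ref{sec:six-dimens-torus} directly for three-dimensional nice manifolds with corners homeomorphic to $D^3$ whose two-dimensional faces are discs, explicitly including the non-polytopal case with two facets and a circle edge.

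A second, related omission: your normalization of the $\theta_{\alpha,\beta}$ leans on the line shelling of $P$, which again does not exist once $X$ is not combinatorially a polytope. The paper substitutes Lemma~\ref{sec:six-dimens-torus-1}, which produces an ordering $F_1,\dots,F_m$ of the facets of $X$ such that every partial union $\bigcup_{i=1}^k F_i$, $k<m$, has all components homeomorphic to discs; this ordering plays the role of the shelling in the inductive construction of the bundle isomorphisms $\phi_\alpha$. To repair your argument you would need to either prove these two lemmas (or equivalents) for general nice manifolds with corners homeomorphic to $D^3$, rather than passing through Steinitz's theorem. The remaining ingredients of your plan (acyclic faces are discs in these low dimensions, triviality of the bundles $P_\alpha$, vanishing of the extension obstructions) are in line with the paper.
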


For the proof of Theorem \ref{sec:class-six-dimens} we need two lemmas which we prove before proving the theorem.

\begin{lemma}
\label{sec:six-dimens-torus}
  Let \(X\) and \(X'\) be two three-dimensional nice manifold with corners which are homeomorphic to \(D^3\) such that all of their two-dimensional faces are homeomorphic to \(D^2\).
  Then \(X\) and \(X'\) are diffeomorphic if and only if their face posets are isomorphic.
\end{lemma}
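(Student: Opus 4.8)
The plan is to reduce the statement to Corollary \ref{sec:class-quas-manif-1}, which already gives the harder direction (isomorphic face posets $\Rightarrow$ diffeomorphic) once we know that $X$ and $X'$ satisfy the hypotheses of Theorem \ref{sec:class-quas-manif}. The converse direction is trivial: a diffeomorphism of nice manifolds with corners carries faces to faces (the function $c$ is diffeomorphism-invariant), hence induces an isomorphism of face posets. So the entire content is the forward direction, and for that it suffices to produce a simple polytope $P$ together with an isomorphism $\phi\colon\mathcal P(X)\to\mathcal P(P)$; then Corollary \ref{sec:class-quas-manif-1} applies to both $X$ and $X'$ (via $\phi$ and via the composite $\mathcal P(X')\cong\mathcal P(X)\xrightarrow{\phi}\mathcal P(P)$), giving diffeomorphisms $X\to P\leftarrow X'$ compatible with the posets, and composing them yields the desired diffeomorphism $X\to X'$.

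First I would check that $X$ (and likewise $X'$) meets the dimension-by-dimension face hypothesis of Theorem \ref{sec:class-quas-manif}: all $1$-dimensional faces are arcs, hence diffeomorphic to $D^1$ after smoothing corners; all $2$-dimensional faces are, by hypothesis, homeomorphic to $D^2$, and after smoothing corners a compact surface with boundary that is homeomorphic to $D^2$ is diffeomorphic to $D^2$ (there are no exotic smooth structures on $D^2$); and $X$ itself, a $3$-manifold with corners homeomorphic to $D^3$, is after smoothing corners diffeomorphic to $D^3$ by the $3$-dimensional smoothing theory (Moise / uniqueness of smooth structures in dimension three). Thus $X$ is a nice manifold with corners all of whose faces of every positive dimension are, after smoothing, diffeomorphic to the disc of the appropriate dimension — exactly the standing hypothesis in Theorem \ref{sec:class-quas-manif}.

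The remaining point is the existence of a simple polytope $P$ with $\mathcal P(P)\cong\mathcal P(X)$. Here I would argue combinatorially: the face poset of such an $X$ is the face poset of a $2$-sphere cell complex (the boundary $\partial X\cong S^2$ is decomposed into the facets of $X$, these being $2$-cells whose boundaries are further decomposed into edges and vertices), and since $X$ is nice each edge lies in exactly two facets and each vertex in exactly three. This data is precisely a $3$-connected planar graph (Steinitz-type data), so by Steinitz's theorem there is a $3$-dimensional convex polytope $P$ realizing it, and the $3$-valence of the vertices makes $P$ simple. This gives the required $\phi$. I expect this polytope-realization step to be the main obstacle, since it is where one must be careful that the combinatorial hypotheses on $X$ (niceness, all $2$-faces being discs, $\partial X\cong S^2$) genuinely force a Steinitz graph; once that is in hand the rest is a formal application of the machinery of Section \ref{sec:app-intro}.
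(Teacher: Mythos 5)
Your reduction has a genuine gap at exactly the step you flagged as the main obstacle: the face poset of such an $X$ need \emph{not} be the face poset of any simple convex polytope, so the Steinitz realization step fails and Corollary~\ref{sec:class-quas-manif-1} cannot be invoked. Concretely, $X$ may have exactly two facets meeting along a single one-dimensional face which is a circle (a ``pillow''); this satisfies every hypothesis of the lemma, but no $3$-polytope has fewer than four facets, and here there are no vertices at all. Even when vertices exist, the vertex--edge graph of $\partial X$ can be a multigraph (two facets sharing two distinct edges, or bigonal facets), whereas Steinitz's theorem requires a \emph{simple} $3$-connected planar graph, and in a simple polytope two facets meet in at most one edge. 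Your auxiliary claim that all $1$-dimensional faces are arcs also fails in the pillow case. So the forward implication is not a formal consequence of the polytope machinery of Section~\ref{sec:app-intro}.

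The paper's proof avoids polytopes entirely: after noting that all faces of dimension $\geq 2$ are, after smoothing, standard discs, it splits into precisely the two cases above (two facets with a circular edge, versus all faces diffeomorphic to discs) and then reruns the inductive extension argument of Theorem~\ref{sec:class-quas-manif} and Corollary~\ref{sec:class-quas-manif-1} \emph{directly between $X$ and $X'$}, replacing the line shelling of $P$ by an arbitrary ordering of the facets of $X'$ (the relevant topological input being that unions of initial segments of facets are still discs, cf.\ Lemma~\ref{sec:six-dimens-torus-1}, and that every diffeomorphism of $S^1$ extends over $D^2$). The parts of your proposal that do hold up --- the trivial converse direction, and the verification that all faces are standard discs after smoothing (using uniqueness of smooth structures on $D^2$ and $D^3$) --- coincide with the opening of the paper's argument, but the core of the forward direction needs the direct $X\to X'$ induction rather than a detour through a realizing polytope.
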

\begin{proof}
At first note that since \(D^2\) and \(D^3\) have unique differentiable structures, all faces of dimension greater than one of \(X\) and \(X'\) are after smoothing the corners diffeomorphic to discs.

Therefore there are two cases:
\begin{enumerate}
\item \(X\) has two two-dimensional faces and one one-dimensional face which is diffeomorphic to \(S^1\).
\item All faces of \(X\) are diffeomorphic to discs.
\end{enumerate}
Since every diffeomorphism of \(S^1\) extends to a diffeomorphism of \(D^2\), one can argue (in both cases) as in the proofs of Theorem~\ref{sec:class-quas-manif} and Corollary \ref{sec:class-quas-manif-1} to prove this lemma.
Here instead of the shelling of the polytope one can use any ordering of the facets of \(X'\).
\end{proof}

\begin{lemma}
\label{sec:six-dimens-torus-1}
  Let \(X\) be a three-dimensional nice manifolds with corners as in the previous lemma. Then there is an ordering \(F_1,\dots,F_m\) of the facets of \(X\) such that for all \(k<m\) all components of \(\bigcup_{i=1}^k F_i\) are homeomorphic to discs.
\end{lemma}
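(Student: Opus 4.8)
The plan is to build the ordering $F_1,\dots,F_m$ essentially as a ``shelling-like'' order, but one that keeps track of the topology of the partial unions of facets rather than of a linear functional. Recall that $X$ is homeomorphic to $D^3$, its boundary $\partial X$ is thus a $2$-sphere, and the facets of $X$ are $2$-discs whose intersections are edges (arcs or circles) and vertices; the face poset records the combinatorics of this cell-like decomposition of $S^2 = \partial X$. So the statement is really a statement about orderings of the $2$-cells of a decomposition of $S^2$: we want an order so that every proper initial union $\bigcup_{i=1}^k F_i$ is a disjoint union of discs. (A single disc would be ideal, but the lemma only asks for a disjoint union of discs, which is weaker and easier to maintain.)

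First I would reduce to the case where $\partial X$ is a genuine CW-decomposition of $S^2$ with every closed $2$-cell embedded. If $X$ has the special form from case (1) of Lemma \ref{sec:six-dimens-torus} (two $2$-faces glued along a circle), then $m=2$ and the ordering $F_1, F_2$ works trivially since $F_1$ is a disc. Otherwise all faces are discs and all the gluings are along arcs, so $\partial X$ is a regular $2$-sphere complex. Then I would proceed by the following greedy argument: having chosen $F_1,\dots,F_k$ with $U_k := \bigcup_{i=1}^k F_i$ a disjoint union of closed discs (and $U_k \neq \partial X$), I claim one can always pick a next facet $F_{k+1}$ so that $U_{k+1}$ is again a disjoint union of discs. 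The complement $\partial X \setminus \mathring U_k$ is a (possibly disconnected) compact surface-with-boundary sitting inside $S^2$, hence each component is a disc-with-holes; since it is non-empty it contains at least one facet $F$, and among the facets meeting the boundary of such a complementary region one can choose $F_{k+1}$ so that $F_{k+1} \cap U_k$ is either empty (then we have added a new disc) or a single arc or a union of arcs lying in the boundary circle of one disc of $U_k$ in a way that the result is still a disc. The key point is that one can always find a facet whose intersection with $U_k$ is \emph{connected} (a single arc) or empty: if every facet touching a complementary region met $U_k$ in a disconnected set, one could derive a contradiction with planarity (two arcs of $\partial F_{k+1}$ on the boundary of a disc would separate that disc, forcing extra components of the complement that the Euler-characteristic count of $S^2$ forbids at this stage). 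Attaching a $2$-disc to a disjoint union of discs along a single boundary arc yields again a disjoint union of discs, so the induction goes through, and at the last step $U_m = \partial X = S^2$.

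The main obstacle, and the step I would spend the most care on, is the combinatorial claim that a ``good'' next facet always exists, i.e. one whose intersection with the current union $U_k$ is connected (or empty). This is where the hypothesis that $X$ is a genuine manifold with corners homeomorphic to $D^3$ with disc facets is essential: it forces $\partial X \cong S^2$ and hence lets me use planarity/Euler-characteristic constraints to rule out the bad configuration. I would phrase this cleanly by working in the dual graph of the facet decomposition of $S^2$ — a connected planar graph whose vertices are facets and whose edges record shared arcs — and choosing the order $F_1,\dots,F_m$ to be, for instance, the order in which the facets are discovered by a breadth-first (or depth-first) search from any starting vertex; then the standard fact that such a search order on a planar graph on $S^2$ keeps the swept region simply connected at each non-final stage gives exactly what is needed. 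Once that graph-theoretic lemma is in place, the rest is the routine ``attach a $2$-disc along an arc'' bookkeeping and the degenerate case (1), so the proof is complete.
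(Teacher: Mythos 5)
Your overall strategy --- a forward ``greedy shelling'' of the cell decomposition of $\partial X\cong S^2$ --- could perhaps be made to work, but the step you yourself identify as the crux is exactly where the argument is incomplete, and the shortcut you offer for it is false. You claim that taking the facets in the order in which a depth-first or breadth-first search of the dual graph discovers them keeps every proper partial union a disjoint union of discs; this is not a standard fact, and for depth-first search it already fails for the combinatorial cube (north cap, south cap, and four lateral facets $E_1,\dots,E_4$): a DFS starting at $E_1$ may visit $E_2,E_3,E_4$ before either cap, and $E_1\cup E_2\cup E_3\cup E_4$ is an annulus. Your alternative justification --- that if every facet adjacent to the current union $U_k$ met it in a disconnected set one would contradict planarity --- is only a gesture, not an argument. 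Note also that ``connected intersection or empty'' is not quite the right condition to aim for: a facet may meet several components of $U_k$, one arc in each, and the union is still a disjoint union of discs, while a facet meeting one component in two arcs produces an annulus; so the existence statement you actually need is that some unused facet meets each component of $U_k$ it touches in exactly one arc. That statement requires a genuine proof (say, by an innermost-pocket or Euler characteristic argument), which the proposal does not contain.

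The paper sidesteps this difficulty entirely by running the induction backwards. It picks the \emph{last} facet $F_m$ arbitrarily, observes that $Y=\partial X-\mathring{F}_m$ is a $2$-disc, and then proves by induction on the number of pieces the following claim about a disc $Y$ decomposed into sub-discs $Y_i$: choose as the last piece any $Y_{i_m}$ meeting $\partial Y$; then $Y-\mathring{Y}_{i_m}$ is a disjoint union of discs, to each of which the induction hypothesis applies. Choosing the last piece rather than the next one makes the required existence statement trivial (any piece touching the outer boundary will do), which is why that proof is three lines long. If you want to keep the forward approach you must prove the ``good next facet'' lemma honestly; otherwise I would recommend reversing the induction as the paper does.
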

\begin{proof}
  Let \(F_m\) be any facet of \(X\).
  Then \(Y=\partial X-F_m\) is a two-dimensional disc.
  Therefore the lemma follows from the following claim:

Let \(Y=\bigcup_{i=1}^m Y_i\) be a two-dimensional disc, such that \(Y_i\subset X\) are discs with piecewise differentiable boundaries and such that \(Y_i\cap Y_j\subset \partial Y_i\) for all \(i,j\).
Then there exists an ordering \(Y_{i_1},\dots,Y_{i_m}\) such that for \(k\leq m\) all components of \(\bigcup_{j=1}^k Y_{i_j}\) are discs.

We prove this claim by induction on \(m\).
Let \(Y_{i_m}\) such that the intersection of \(Y_{i_m}\) with the boundary of \(Y\) is non-empty.
Then the complement of the interior of \(Y_{i_m}\) in \(Y\) is a disjoint union of discs.
Therefore we get the ordering of the \(Y_i\) from the induction hypothesis.  
\end{proof}

\begin{proof}[Proof of Theorem 6.1]
  Since \(M\) is simply connected.
  The orbit space \(X=M/T\) is simply connected.
  Therefore it is a disc.
  Hence, we see with Lemma~\ref{sec:six-dimens-torus} that the diffeomorphism type of \(X\) depends only on \(\mathcal{P}\).
  
  Moreover, we can argue as in section \ref{sec:app-intro} to see that the equivariant diffeomorphism type of \(M\) depends only on the smooth structure on \(X\) and the map \(\lambda\).
  Here one has to use the ordering of the facets of \(X\) from Lemma~\ref{sec:six-dimens-torus-1} instead of the shelling of the polytopes.

  If we put these two steps together then the theorem follows.
\end{proof}

Since \(\mathcal{P}\) and \(\lambda\) are determined by the \(T\)-equivariant homeomorphism type of \(M\) we get:

\begin{corollary}
\label{sec:six-dimens-torus-2}
  Simply connected six-dimensional torus manifolds with vanishing odd degree cohomology are equivariantly homeomorphic if and only if they are equivariantly diffeomorphic.
\end{corollary}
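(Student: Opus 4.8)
The plan is to read the corollary off from Theorem~\ref{sec:class-six-dimens} combined with the observation that the combinatorial data \((\mathcal{P},\lambda)\) attached to a six-dimensional torus manifold \(M\) with \(H^{odd}(M)=0\) is an invariant of its \(T\)-equivariant homeomorphism type. The ``if'' direction is immediate, since an equivariant diffeomorphism is in particular an equivariant homeomorphism. So only the ``only if'' direction needs an argument.

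First I would fix an equivariant homeomorphism \(\Phi\colon M_1\to M_2\) between two simply connected six-dimensional torus manifolds with \(H^{odd}(M_i)=0\). By Theorem~4.1 of \cite{1111.57019} both \(M_i\) are locally standard, so their orbit spaces \(X_i=M_i/T\) are nice manifolds with corners with acyclic faces, and the characteristic maps \(\lambda_i\) are defined. Next I would check that \(\Phi\) descends to a homeomorphism \(\bar\Phi\colon X_1\to X_2\) of orbit spaces that is compatible with the stratification by orbit type: the isotropy group of \(\Phi(x)\) is the image of the isotropy group of \(x\) under the automorphism of \(T\) implicit in the notion of equivariant homeomorphism, so orbit types are preserved; hence \(\bar\Phi\) maps the strata \(c^{-1}(k)\) to one another, carries faces to faces, and induces an isomorphism of face posets \(\mathcal{P}_1\to\mathcal{P}_2\) intertwining \(\lambda_1\) and \(\lambda_2\) (up to that automorphism of \(T\)). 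In other words, \(M_1\) and \(M_2\) give rise to the same data \((\mathcal{P},\lambda)\).

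Finally I would invoke Theorem~\ref{sec:class-six-dimens}: the equivariant diffeomorphism type of a simply connected six-dimensional torus manifold with \(H^{odd}=0\) is uniquely determined by \(\mathcal{P}\) and \(\lambda\). Since \(M_1\) and \(M_2\) have isomorphic \((\mathcal{P},\lambda)\), they are equivariantly diffeomorphic, which completes the nontrivial direction.

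The only genuinely delicate point I expect is the verification in the middle step that an equivariant homeomorphism really does respect \((\mathcal{P},\lambda)\) on the nose: that it descends to the orbit spaces, preserves their orbit-type stratification and therefore the face posets, and matches the assignment of isotropy subtori (possibly after composing with an automorphism of \(T\), which is the ambiguity already built into ``equivariantly homeomorphic'' and ``equivariantly diffeomorphic''). Once this is granted, everything else is purely formal given Theorem~\ref{sec:class-six-dimens}.
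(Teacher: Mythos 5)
Your proposal is correct and matches the paper's argument, which is exactly the one-line observation that $\mathcal{P}$ and $\lambda$ are invariants of the equivariant homeomorphism type, followed by an appeal to Theorem~\ref{sec:class-six-dimens}. You have merely spelled out the details the paper leaves implicit.
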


  With  Corollary 7.8 of \cite[p. 736]{1111.57019} and Theorem \ref{sec:class-six-dimens} one can show that Theorem 2.2 of \cite{wiemeler-remarks} holds for simply connected six-dimensional torus manifolds with cohomology generated in degree two.
To be more precise, we have the following theorem.

\begin{theorem}
  \label{sec:class-six-dimens-2}
  Let \(M,M'\) be simply connected torus manifolds of dimension six such that \(H^*(M)\) and \(H^*(M')\) are generated by their degree two parts.
Let \(m,m'\) be the numbers of characteristic submanifolds of \(M\) and \(M'\), respectively.
Assume that \(m\leq m'\).
  Furthermore, let \(u_1,\dots,u_m\in H^2(M)\) be the Poincar\'e-duals of the characteristic submanifolds of \(M\) and  \(u_1',\dots,u_{m'}'\in H^2(M')\) the Poincar\'e-duals of the characteristic submanifolds of \(M'\).
  If there is a ring isomorphism \(f:H^*(M)\rightarrow H^*(M')\) and a permutation \(\sigma:\{1,\dots,m'\}\rightarrow \{1,\dots,m'\}\) with \(f(u_i)=\pm u_{\sigma(i)}'\), \(i=1,\dots,m\), then \(M\) and \(M'\) are weakly T-equivariantly diffeomorphic.
\end{theorem}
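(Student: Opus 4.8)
The plan is to reduce the statement to Theorem~\ref{sec:class-six-dimens}. Since $H^*(M)$ and $H^*(M')$ are generated in degree two they vanish in odd degrees, so Theorem~\ref{sec:class-six-dimens} applies and tells us that $M$ and $M'$ are determined up to equivariant diffeomorphism by the pairs $(\mathcal{P},\lambda)$ and $(\mathcal{P}',\lambda')$, respectively. It therefore suffices to produce an isomorphism of face posets $\mathcal{P}\to\mathcal{P}'$ under which $\lambda$ and $\lambda'$ correspond after post-composing with a suitable automorphism of the torus $T$. This extra freedom in $T$ is precisely what turns the conclusion into a \emph{weak} equivariant diffeomorphism rather than an honest one. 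Observe also that the sign ambiguity in $f(u_i)=\pm u'_{\sigma(i)}$ is harmless: replacing $u_i$ by $-u_i$ changes neither the characteristic submanifold to which it is Poincar\'e dual nor the subtorus $\lambda(M_i)$, since that subtorus depends only on the normal circle up to inversion.

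The key input is Corollary~7.8 of \cite[p.~736]{1111.57019}: for a simply connected six-dimensional torus manifold $M$ with $H^{\mathrm{odd}}(M)=0$ and $H^*(M)$ generated in degree two, the Poincar\'e duals $u_1,\dots,u_m$ of the characteristic submanifolds generate $H^2(M)$ and $H^*(M)$ admits a Stanley--Reisner type presentation $\mathbb{Z}[u_1,\dots,u_m]/(I_{\mathcal{P}}+J_\lambda)$, where $I_{\mathcal{P}}$ is generated by the squarefree monomials $\prod_{i\in S}u_i$ with $S$ not a face of $\mathcal{P}$ and $J_\lambda$ is the linear ideal encoding the characteristic map. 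In particular $\rank H^2(M)=m-3$, so the existence of the ring isomorphism $f$ forces $m=m'$. Hence $\sigma$ is a bijection of $\{1,\dots,m\}$ with itself and $f(u_i)=\pm u'_{\sigma(i)}$ for every $i$.

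Granting this, the remaining steps are bookkeeping. Applying the ring isomorphism $f$ we see that a squarefree monomial $\prod_{i\in S}u_i$ vanishes in $H^*(M)$ if and only if $\prod_{i\in S}u'_{\sigma(i)}$ vanishes in $H^*(M')$, so $\sigma$ carries $I_{\mathcal{P}}$ to $I_{\mathcal{P}'}$ and therefore induces an isomorphism $\mathcal{P}\to\mathcal{P}'$ of face posets. Likewise $f$ carries the degree-two part of $J_\lambda$ onto that of $J_{\lambda'}$, so reading off the linear relations among the $u_i$ and among the $u'_{\sigma(i)}$ recovers the characteristic vectors in $H_2(BT)$ up to the harmless signs above and up to a common change of basis of $H_2(BT)$, i.e.\ up to an automorphism of $T$. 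Thus $(\mathcal{P},\lambda)$ and $(\mathcal{P}',\lambda')$ agree after relabelling by $\sigma$ and applying an automorphism of $T$, and Theorem~\ref{sec:class-six-dimens} then completes the proof.

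The only real difficulty I anticipate is the first step, namely pinning down from \cite{1111.57019} that simple connectivity together with generation in degree two does place us in the ``homology polytope'' situation in which the above Stanley--Reisner presentation exists and the rank identity $\rank H^2(M)=m-n$ holds (so that $m=m'$ is automatic and the $u_i$ really do generate $H^2$); once this is available, the extraction of the combinatorial data $\mathcal{P}$ and of the characteristic data $\lambda$ from the ring isomorphism, and the final appeal to Theorem~\ref{sec:class-six-dimens}, are routine.
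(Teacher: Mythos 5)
Your proposal is correct and follows exactly the route the paper intends: the paper gives no written proof of this theorem, only the one-line indication that it follows from Corollary~7.8 of \cite{1111.57019} together with Theorem~\ref{sec:class-six-dimens} by the argument of Theorem~2.2 of \cite{wiemeler-remarks}, and your argument fills in precisely that outline (extracting $\mathcal{P}$ from the vanishing pattern of the squarefree monomials in the $u_i$, extracting $\lambda$ up to sign and an automorphism of $T$ from the linear relations, and then invoking Theorem~\ref{sec:class-six-dimens}). The observation that $\rank H^2=m-3$ forces $m=m'$ is also the right way to dispose of the hypothesis $m\leq m'$.
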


\begin{theorem}
  Let \(M\) be a six-dimensional torus manifold with \(H^{odd}(M)=0\).
Then there are a three-dimensional homology sphere \(X\) and a simply connected torus manifold \(M'\) such that \(M=\alpha(M',X)\).
\end{theorem}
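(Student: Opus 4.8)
The plan is to extract the fundamental group of $M$ as a homology $3$-sphere summand of the orbit space and then to recognise the resulting decomposition of $M$ as an instance of the $\alpha$-construction from Section~\ref{sec:fundamental}. First I would analyse the orbit space. Since $H^{odd}(M)=0$, Theorem 4.1 of \cite{1111.57019} says that $M$ is locally standard, that $W:=M/T$ is a nice $3$-dimensional manifold with corners, and that all faces of $W$ are acyclic. In particular $W$ is an acyclic compact orientable $3$-manifold with boundary; combining Poincar\'e--Lefschetz duality with the long exact sequence of the pair $(W,\partial W)$ shows that $\partial W$ is a connected closed orientable surface with $H_1(\partial W)=0$, hence $\partial W\cong S^2$. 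Let $X:=W\cup_{S^2}D^3$ be the closed oriented $3$-manifold obtained by capping off the boundary; then $\tilde H_*(X)=0$, so $X$ is a homology $3$-sphere, and $\pi_1(X)=\pi_1(W)=\pi_1(M)$ by Lemma~\ref{sec:fund-groups-torus-6}. A homology sphere is orientable and every orientable $3$-manifold carries a smooth structure, so $X$ is a smooth homology $3$-sphere.

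Next I would produce the decomposition of $M$. Choose a collar $c:\partial W\times[0,1]\hookrightarrow W$ with all corners of $\partial W\times[0,1]$ lying on $\partial W\times\{0\}$, and put $W':=W\setminus c(\partial W\times[0,1[)$, a compact $3$-manifold with smooth boundary $c(\partial W\times\{1\})\cong S^2$ satisfying $W'\cong W\cong X\setminus\mathring{D}^3$. Over $W'$ the principal $T^3$-bundle underlying $M$ is classified by an element of $H^2(W';\mathbb{Z})^3=H^2(W;\mathbb{Z})^3=0$, hence it is trivial; so the restriction of $M$ over $W'$ is equivariantly diffeomorphic to $T^3\times W'\cong T^3\times(X\setminus\mathring{D}^3)$. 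Let $N\subseteq M$ be the $T$-invariant submanifold lying over $c(\partial W\times[0,1])$; it is a locally standard $T^3$-manifold with corners with $\partial N=T^3\times S^2$ (the part of $M$ over $c(\partial W\times\{1\})$, which is trivial because it bounds the trivial bundle over $W'$). Thus $M$ is obtained by gluing $T^3\times(X\setminus\mathring{D}^3)$ to $N$ along $T^3\times S^2$.

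Finally I would build $M'$ and conclude. Define $M':=N\cup_{T^3\times S^2}(T^3\times D^3)$, glued along $\partial N$. This is a closed connected smooth orientable $6$-manifold with an effective $T^3$-action; its orbit space is $c(\partial W\times[0,1])\cup_{S^2}D^3$, which is homeomorphic to $D^3$, and since every $T$-fixed point of $M$ lies over a vertex of $W$, hence over a point of $\partial W\subseteq c(\partial W\times[0,1])$, we have $\emptyset\neq M^T\subseteq(M')^T$. Hence $M'$ is a torus manifold; being assembled from locally standard pieces glued along an invariant boundary it is locally standard, so by Lemma~\ref{sec:fund-groups-torus-6} we get $\pi_1(M')=\pi_1(M'/T)=\pi_1(D^3)=1$. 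Thus $M'$ is a simply connected torus manifold (and in fact $H^{odd}(M')=0$, though this is not needed). By construction $N=M'\setminus\iota_2(T^3\times\mathring{D}^3)$ for the equivariant tubular neighbourhood $\iota_2$ of the principal orbit $T^3\times\{0\}$ furnished by the glued-in copy of $T^3\times D^3$, so
\begin{equation*}
  M=\bigl(T^3\times(X\setminus\iota_1(\mathring{D}^3))\bigr)\cup\bigl(M'\setminus\iota_2(T^3\times\mathring{D}^3)\bigr)
\end{equation*}
for a suitable embedding $\iota_1:D^3\hookrightarrow X$, glued along $T^3\times S^2$. This is exactly the shape of $\alpha(M',X)$, and by Lemma~\ref{sec:fund-groups-torus-4} the equivariant diffeomorphism type of $\alpha(M',X)$ does not depend on the choices of $\iota_1$ and $\iota_2$, so $M$ is equivariantly diffeomorphic to $\alpha(M',X)$.

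The point that will require the most care is this last step: one must check that the hypersurface along which $M$ is cut can be taken to be a genuine equivariant copy of $T^3\times S^2$, that the orientations of $X$ and of $M'$ can be chosen so that the gluing matches the one in the definition of $\alpha(M',X)$ up to an equivariant self-diffeomorphism of $T^3\times S^2$ extending over $T^3\times D^3$, and that $M'$ really satisfies the definition of a torus manifold. Once the $T^3$-bundle over the acyclic base $W'$ has been trivialised, the identifications of $N$ with $M'\setminus(T^3\times\mathring{D}^3)$ and of $T^3\times W'$ with $T^3\times(X\setminus\mathring{D}^3)$ are immediate, and Lemma~\ref{sec:fund-groups-torus-4} absorbs the remaining ambiguity in the gluing, so the rest is bookkeeping rather than a genuinely new difficulty.
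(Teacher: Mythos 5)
Your proposal is correct and follows essentially the same route as the paper: both decompose $M$ into the trivial $T^3$-bundle over the (acyclic, hence homology-disc) interior part of the orbit space and the piece $N$ lying over a collar of $\partial(M/T)\cong S^2$, then cap off the base by a $3$-disc to obtain the homology sphere $X$ and glue $T^3\times D^3$ into $N$ to obtain the simply connected $M'$. You merely make explicit two points the paper leaves implicit (that $\partial(M/T)\cong S^2$ via Poincar\'e--Lefschetz duality and that the principal bundle over the acyclic piece is trivial since $H^2=0$), and Lemma~\ref{sec:fund-groups-torus-4} handles the remaining gluing ambiguity exactly as you say.
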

\begin{proof}
The orbit space of the \(T\)-action on \(M\) is after smoothing the corners a three-dimensional homology disc \(\tilde{D}^3\).
Therefore we have a decomposition of \(M\) of the form
  \begin{equation*}
    M=\tilde{D}^3\times T^3\cup_{f} M'',
  \end{equation*}
  where \(M''\) is a six-dimensional \(T\)-manifold with boundary and \(M''^T\neq\emptyset\) and \(f:\partial\tilde{D}^3\times T^3\rightarrow \partial M''\) is an equivariant diffeomorphism.
  The orbit space of \(M''\) is after smoothing the corners diffeomorphic to \(S^2\times [0,1]\).

  Choose a diffeomorphism \(h:S^2\rightarrow \partial \tilde{D}^3\).
  Then let \(M'\) be the torus manifold
  \begin{equation*}
    M'=D^3\times T^3\cup_{f\circ(h\times \id_{T^3})}M''
  \end{equation*}
and \(X=\tilde{D}^3\cup_{h}D^3\).
Then, by Lemma \ref{sec:fund-groups-torus-6}, we have \(\pi_1(M')=\pi_1(D^3)=1\) and \(X\) is a homology sphere.

It is immediate from the definitions of \(M'\) and \(X\) that \(M=\alpha(M',X)\).
\end{proof}

\section{Tori in the diffeomorphism group of high dimensional torus manifolds}
\label{sec:tori_in_diff}

In this section we prove the following theorem.

\begin{theorem}
  \label{sec:tori-diff-group}
  Let \(M\) be a \(2n\)-dimensional torus manifold, \(n\geq 4\).
  Then there are infinitely many non-conjugated \(n\)-dimensional tori in \(\Diff(M)\).
\end{theorem}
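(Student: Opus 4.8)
Two $n$-dimensional tori $T_1,T_2\subset\Diff(M)$ are conjugate in $\Diff(M)$ precisely when there is a diffeomorphism $\varphi\colon M\to M$ with $\varphi T_1\varphi^{-1}=T_2$; such a $\varphi$ is a weakly equivariant diffeomorphism between the two resulting $T^n$-manifold structures on $M$ (the induced isomorphism $T_1\to T_2$ becoming an automorphism of $T^n$ once both are identified with $T^n$). Hence the task is to equip the single smooth manifold $M$ with infinitely many effective $T^n$-actions, each having a fixed point, no two of which are weakly equivariantly diffeomorphic. The plan is to produce these actions by altering the given one inside a $T$-invariant chart, keeping the underlying smooth manifold literally equal to $M$ but changing the action; the alteration has to be ``invisible'' to the manifold yet ``visible'' to the equivariant structure, and the point of the hypothesis $n\ge 4$ is that there is then an infinite supply of such alterations.

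\textbf{The family of actions.} Concretely, I would start from a $T$-invariant open set $V\subset M$ together with an equivariant identification of $V$ with an invariant neighbourhood of a principal orbit, say $V\cong T^n\times\mathring D^n$, as in the construction of $\alpha(M,X)$, and replace the clutching of this tube to $M\setminus V$ by a clutching twisted by an element $h_k$ of an infinite family of equivariant self-diffeomorphisms of the link $T^n\times S^{n-1}$ (or, when the principal-orbit tube turns out not to be flexible enough, of the link of a suitable positive-codimension invariant submanifold, whose dimension is then at least $2(n-1)\ge 6$). The $h_k$ are to be chosen so that (i) each extends over the glued-in tube as a \emph{non-equivariant} diffeomorphism, so that the reglued torus manifold $M_k$ is diffeomorphic to $M=\alpha(M,S^n)$, while (ii) no $h_k$ is a composite of an equivariant self-diffeomorphism of $M\setminus V$ with an equivariant self-diffeomorphism of the tube, so that the $M_k$ carry genuinely different $T^n$-actions. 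That an infinite such family exists is exactly where $n\ge 4$ enters: the relevant group of isotopy classes, which is finite when $2n\le 6$, becomes infinite once the link in question has dimension at least four (respectively, the stratum has dimension at least six), using the flexibility of smooth structures on manifolds with corners exploited in Section~\ref{sec:app-intro}.

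\textbf{Separating the actions, and the crux.} It remains to see that infinitely many of the $M_k$ are pairwise not weakly equivariantly diffeomorphic. As in Sections~\ref{sec:homotopy_not_homeo}--\ref{sec:homeo_not_diffeo}, I would detect this by an invariant that is insensitive to the choice of $\varphi$ above but changes with $k$ — for instance the diffeomorphism (or merely homeomorphism) type of the orbit space $M_k/T$, or a Pontrjagin-type class of a face of $M_k/T$ or of a characteristic submanifold of $M_k$, all of which are preserved by weakly equivariant diffeomorphisms. The main obstacle is making (i) and (ii) coexist for an infinite family: one must simultaneously arrange that the twist is absorbed by the ambient smooth structure (so that every $M_k\cong M$) and yet is seen by the equivariant invariant (so that infinitely many $M_k$ are inequivalent). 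It is precisely this combination that is available for $n\ge 4$ and that, as the six-dimensional examples of Section~\ref{sec:tori_diff_six} show, fails in general for $2n\le 6$.
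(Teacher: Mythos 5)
Your reduction to producing infinitely many pairwise non--weakly-equivariantly-diffeo\-mor\-phic effective $T^n$-actions on the fixed smooth manifold $M$ is correct and is indeed the paper's framework. But the concrete mechanism you propose --- retwisting the clutching of a principal-orbit tube $T^n\times \mathring{D}^n$ by equivariant self-diffeomorphisms $h_k$ of the link $T^n\times S^{n-1}$ --- cannot yield an infinite family, and this is where the argument breaks. On the link the $T^n$-action is free, so an equivariant self-diffeomorphism is determined up to equivariant isotopy by a diffeomorphism of the quotient $S^{n-1}$ together with a map $S^{n-1}\to T^n$; the latter is null-homotopic for $n\geq 3$. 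This is precisely the rigidity used in Lemma~\ref{sec:fund-groups-torus-4} to show that $\alpha(M,X)$ is independent of the choice of tube: such twists are absorbed. What survives is governed by $\pi_0\Diff(S^{n-1})$, i.e.\ by the \emph{finite} group of homotopy $n$-spheres, and moreover such a twist changes neither the orbit space nor the diffeomorphism type of the characteristic submanifolds, so none of the separating invariants you list would detect it. Your assertion that ``the relevant group of isotopy classes becomes infinite once the link has dimension at least four'' is unsubstantiated and is not where the hypothesis $n\geq 4$ actually enters.

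The paper's construction $\beta(M,X)$ instead modifies the action at a tubular neighbourhood $T'\times D^{n+1}$ of an $(n-1)$-dimensional orbit $T/S^1$ contained in a characteristic submanifold $M_1$ (Lemma~\ref{sec:tori-diff-group-1} identifies the normal data), replacing it by $T'\times(X-\mathring{D}^{n+1})$ where $X$ is an $S^1$-manifold as in Lemma~\ref{sec:tori-diff-group-2}. Taking $X=\partial(D^2\times Y)$ with $Y$ a compact contractible $n$-manifold and $S^1$ rotating the $D^2$-factor, the h-cobordism theorem gives $X\cong S^{n+1}$, so $\beta(M,X)\cong M$ as a smooth manifold; yet $X^{S^1}=\partial Y$, and the characteristic submanifold $M_1$ is replaced by $\alpha(M_1,\partial Y)$, whose fundamental group is $\pi_1(\partial Y)*\pi_1(M_1)$ by Theorem~\ref{sec:fund-groups-torus-2}. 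The hypothesis $n\geq 4$ enters exactly here: for $n\geq 4$ there are infinitely many contractible $Y_i$ with $\rank\pi_1(\partial Y_i)$ pairwise distinct, and conjugate tori would force the disjoint unions of their characteristic submanifolds to be diffeomorphic, which these fundamental groups rule out. So the separating invariant is the fundamental group of a characteristic submanifold, not the orbit space or a Pontrjagin class, and the source of infiniteness is the supply of non-simply-connected homology spheres bounding contractible manifolds, not an isotopy group of the link. As it stands your proposal is missing this essential construction and its detection mechanism.
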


\begin{remark}
  The number of tori in the diffeomorphism group of four-dimensio\-nal simply connected torus manifolds were computed by Melvin \cite{0486.57016}.
  He proved that there are four-dimensional torus manifolds which have only a finite number of non-conjugated tori in their diffeomorphism group.
\end{remark}

For the proof of Theorem~\ref{sec:tori-diff-group} we need two lemmas.
Let \(M\) be a torus manifold. A characteristic submanifold of \(M\) is a connected submanifold \(N\) of codimension two which is fixed by a one-dimensional subtorus of \(T\) such that \(N^T\) is non-empty.
It is easy to see that each torus manifold has a finite number of characteristic submanifolds.
Moreover, the isotropy group of a generic point in a characteristic manifold is connected.

\begin{lemma}
\label{sec:tori-diff-group-1}
  Let \(M\) be a \(2n\)-dimensional torus manifolds and \(T/S^1\) an \((n-1)\)-dimensional orbit, which is contained in a characteristic submanifold \(M_1\) of \(M\).
Furthermore, denote by \(T'\) a \((n-1)\)-dimensional subtorus of \(T\) such that \(T=T'\times S^1\).
  Then the \(T\)-equivariant normal bundle of \(T/S^1\cong T'\) is given by
  \begin{equation*}
    N(T/S^1,M)=T'\times \R^{n-1}\times \C,
  \end{equation*}
where \(T\) acts on \(\C\) through the projection \(T=T'\times S^1\rightarrow S^1\hookrightarrow \C^*\).
\end{lemma}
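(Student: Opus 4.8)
The plan is to compute $N(T/S^1,M)$ by breaking it up along the chain of $T$-invariant submanifolds $T/S^1\subset M_1\subset M$ and identifying each piece with the slice theorem. First I would observe that the isotropy group $S^1$ of the orbit $T/S^1$ is precisely the one-dimensional subtorus fixing $M_1$ pointwise: the latter is contained in the former because the orbit lies in $M_1$, and both are circles, so they coincide. Since $T=T'\times S^1$ and the orbit has isotropy exactly $S^1$, the complementary torus $T'$ acts freely on $T/S^1\cong T'$, so $T/S^1$ is a free $T'$-orbit sitting inside $M_1$.

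Next, fixing a $T$-invariant Riemannian metric on $M$ and using that $M_1\subset M$ and $T/S^1\subset M_1$ are $T$-invariant, I would get the $T$-equivariant splitting
\[
N(T/S^1,M)\cong N(T/S^1,M_1)\oplus N(M_1,M)|_{T/S^1}.
\]
For the first summand: as $T/S^1$ is a free $T'$-orbit in $M_1$, the slice theorem gives a $T'$-invariant tubular neighbourhood $T'\times D^{n-1}$ with $T'$ translating the $T'$-factor and acting trivially on $D^{n-1}$ (the slice representation of the trivial isotropy group is trivial); moreover $S^1$ acts trivially on all of $M_1$, hence on $TM_1$ and on the subbundle $N(T/S^1,M_1)$. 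So as a $T$-bundle $N(T/S^1,M_1)=T'\times\R^{n-1}$ with $T$ acting only through translations by the $T'$-factor.

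For the second summand, $N(M_1,M)$ is a rank-two $T$-equivariant bundle over $M_1$. Since the normal bundle of a connected component of the fixed-point set of a circle action has even rank and no nonzero fixed vector, $M_1$ is a full component of $M^{S^1}$ and $S^1$ acts on the fibres of $N(M_1,M)$ without nonzero fixed vector; this endows $N(M_1,M)$ with a complex structure in which $S^1$ acts by a nonzero weight $\ell$. The key step is to show $\ell=\pm1$: at a point $q$ of the orbit the slice theorem presents a $T$-invariant neighbourhood as $T\times_{S^1}V$ with $V=N(T/S^1,M_1)_q\oplus N(M_1,M)_q$, a trivial $\R^{n-1}$ plus $\C_\ell$, so an element $u\in S^1$ acts trivially on this neighbourhood if and only if $u^{\ell}=1$; effectiveness of the $T$-action forces $|\ell|=1$. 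Then $N(M_1,M)|_{T/S^1}$ is a $T$-equivariant complex line bundle over the homogeneous space $T/S^1$, hence isomorphic to $T\times_{S^1}\C_{\pm1}$, and writing $T=T'\times S^1$ this is $T'\times\C$ with $T'$ translating the $T'$-factor and acting trivially on $\C$, while $S^1$ acts on $\C$ through $S^1\hookrightarrow\C^*$ (replace the complex structure by its conjugate if $\ell=-1$). Adding the two summands yields $N(T/S^1,M)=T'\times\R^{n-1}\times\C$ with the asserted $T$-action, where $T$ acts on $\C$ through the projection $T=T'\times S^1\to S^1\hookrightarrow\C^*$.

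I expect the only genuine subtlety to be pinning down the weight $\ell$ as $\pm1$ rather than merely ``some nonzero integer''; the point that makes the effectiveness argument go through is that $S^1$ acts on the slice $V$ exactly through the summand $\C_\ell$, because it acts trivially on the part of the slice tangent to $M_1$. The remaining ingredients — the $T$-equivariant splitting of the normal bundle, triviality of the slice representation of a free orbit, and the identification of $T$-equivariant bundles over $T/S^1$ with $S^1$-representations — are routine.
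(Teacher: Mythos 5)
Your proof is correct and follows essentially the same route as the paper: split $N(T/S^1,M)$ as $N(T/S^1,M_1)\oplus N(M_1,M)|_{T/S^1}$, identify the first summand as trivial because $T/S^1$ is a principal orbit of the $T$-action on $M_1$, and identify the second as $T\times_{S^1}\C\cong T'\times\C$. You supply more detail on why the $S^1$-weight on $N(M_1,M)$ is $\pm1$ (the paper simply asserts the fibre is the standard representation $\C$); just note that the effectiveness argument needs the standard extra step that a group element acting trivially on an open subset of the connected manifold $M$ acts trivially on all of $M$.
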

\begin{proof}
  For \(x\in T/S^1\) the \(S^1\)-representation \(N_x(M_1,M)\) is isomorphic to \(\C\).
Therefore we have
\begin{equation*}
  N(M_1,M)|_{T/S^1}=T\times_{S^1}\C = T'\times \C
\end{equation*}
and 
\begin{equation*}
  N(T/S^1,M_1)=T/S^1\times \R^{n-1}
\end{equation*}
because \(T/S^1\) is a principal orbit of the \(T\)-action on \(M_1\).
Therefore the statement follows.
\end{proof}

Similarly one proves.
\begin{lemma}
\label{sec:tori-diff-group-2}
  Let \(X\) be a closed connected oriented manifold of dimension \(n+1\geq 3\) on which \(S^1\) acts effectively, such that \(X^{S^1}\) is connected and \(\codim X^{S^1}=2\).
  Then for \(x\in X^{S^1}\) one has
  \begin{equation*}
    T_xX=\R^{n-1}\times \C.
  \end{equation*}
\end{lemma}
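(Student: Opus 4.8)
The plan is to linearise the action near \(x\) and simply read off the isotropy representation. By the differentiable slice theorem (equivalently: average a Riemannian metric to an \(S^1\)-invariant one and use the equivariant exponential map at \(x\)), a neighbourhood of \(x\) in \(X\) is \(S^1\)-equivariantly diffeomorphic to the isotropy representation \(T_xX\), on which \(S^1=S^1_x\) acts linearly. Since the fixed point set of a smooth compact group action is a submanifold whose tangent space at a point is the fixed subspace of the ambient tangent space, we have \(T_x(X^{S^1})=(T_xX)^{S^1}\), a trivial subrepresentation of real dimension \(\dim X^{S^1}=(n+1)-2=n-1\). Splitting off an invariant complement gives \(T_xX\cong \R^{n-1}\oplus V\), where \(V\) is a two-dimensional real \(S^1\)-representation with \(V^{S^1}=0\). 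A two-dimensional real representation of \(S^1\) with no nonzero fixed vector is irreducible, hence isomorphic to \(\C\) with \(S^1\) acting by \(z\mapsto z^k\) for some integer \(k\neq 0\) (and \(k\) may be taken positive, since \(\C_{-k}\cong\C_k\) as real \(S^1\)-representations via complex conjugation). This is exactly as in the proof of Lemma~\ref{sec:tori-diff-group-1}, where the normal representation of a characteristic submanifold was identified with \(\C\).

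It remains to show \(k=1\), which is the only step requiring thought and is where the effectiveness hypothesis is used. Suppose \(k\geq 2\). Then the cyclic subgroup \(\mathbb{Z}/k\mathbb{Z}\subset S^1\) acts trivially on the model neighbourhood \(\R^{n-1}\oplus\C_k\) of \(x\), so the fixed point set \(X^{\mathbb{Z}/k\mathbb{Z}}\) contains a nonempty open subset of \(X\). But \(X^{\mathbb{Z}/k\mathbb{Z}}\) is a closed submanifold of \(X\); a closed submanifold containing a nonempty open set is a union of connected components, and since \(X\) is connected this forces \(X^{\mathbb{Z}/k\mathbb{Z}}=X\), contradicting effectiveness of the \(S^1\)-action. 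Hence \(k=1\) and \(T_xX\cong\R^{n-1}\times\C\) with the standard action on the \(\C\)-factor, as claimed.

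The main obstacle is precisely this last translation of the \emph{global} effectiveness hypothesis into the \emph{local} statement that the normal weight equals \(\pm 1\); everything else is the standard local structure theory of smooth compact transformation groups (slice theorem, fixed sets are submanifolds, complete reducibility). Note that connectedness of \(X\) is genuinely needed for the effectiveness argument, whereas the orientability of \(X\) and connectedness of \(X^{S^1}\) play no role in this tangent-space computation and are recorded only because they are the hypotheses under which the lemma will be applied in the following section.
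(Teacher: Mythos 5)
Your proof is correct and follows the route the paper intends: the paper gives no argument beyond ``Similarly one proves,'' deferring to Lemma~\ref{sec:tori-diff-group-1}, whose proof simply asserts that the two-dimensional normal representation is \(\C\). Your write-up (linearization at the fixed point, splitting off the trivial summand \(T_x(X^{S^1})\), and using connectedness plus effectiveness to rule out weight \(|k|\geq 2\)) supplies exactly the details the paper leaves implicit.
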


These two lemmas enable us to introduce a second construction \(\beta(M,X)\).
Let \(M\), \(T/S^1\), \(X\), \(x\) as in Lemmas \ref{sec:tori-diff-group-1} and \ref{sec:tori-diff-group-2}.
Then we define the torus manifold \(\beta(M,X)\) as
\begin{equation*}
  \beta(M,X)= M-(T'\times \mathring{D}_1^{n+1})\cup_{T'\times S^n} T'\times (X- \mathring{D}^{n+1}_2),
\end{equation*}
where \(T'\times D_1^{n+1}\) is an equivariant tubular neighborhood of \(T/S^1\) in \(M\) and \(D_2^{n+1}\) is an equivariant tubular neighborhood of \(x\) in \(X\).

It is easy to see that if \(X\) is diffeomorphic to \(S^{n+1}\) then \(\beta(M,X)\) and \(M\) are diffeomorphic.
Moreover, if \(M_1,\dots,M_m\) are the characteristic submanifolds of \(M\), then \(\alpha(M_1,X^{S^1}),M_2,\dots,M_m\) are the characteristic submanifolds of \(\beta(M,X)\).

Now we construct \(S^1\)-manifolds with the properties mentioned in Lemma \ref{sec:tori-diff-group-2} which are diffeomorphic to the standard sphere.
This construction is analogous to a construction in \cite{0139.16903}.

Let \(Y\) be a contractible compact manifold of dimension \(n\geq 4\).
Then \(S^1\) acts differentiable on \(D^2\times Y\) (with corners equivariantly smoothed).
Moreover, \(X=\partial (D^2\times Y)= S^1\times Y\cup_{S^1\times \partial Y} D^2\times \partial Y\) is simply connected and \(D^2\times Y\) is contractible.
Therefore it follows from the h-cobordism theorem that \(D^2\times Y\) is diffeomorphic to the standard \(n+2\)-dimensional disc \cite[Corollary 4.5, p. 154]{0767.57001}.

Therefore \(X\) is the standard sphere and \(X^{S^1}=\partial Y\).

Since \(n\geq 4\), there are infinitely many contractible manifolds \(\{Y_i\}\) with
\begin{equation*}
  \rank \pi_1(\partial Y_i)\neq \rank \pi_1(\partial Y_j)
\end{equation*}
 for \(i\neq j\).
By the above construction, the \(Y_i\) give rise to an infinite series of \(n\)-dimensional tori in the diffeomorphism group of \(M\).
If two of these tori are conjugated in the diffeomorphism group of \(M\), then the disjoint unions of the characteristic submanifolds corresponding to these tori are diffeomorphic.
But this is impossible by Theorem~\ref{sec:fund-groups-torus-2} and the remark about the characteristic submanifolds of \(\beta (M, X)\).

Therefore Theorem~\ref{sec:tori-diff-group} follows.

\begin{remark}
If \(H^*(M)\) is generated by its degree two part, the cohomology ring of \(M\) may be described in terms of the combinatorial structure of its characteristic submanifolds \cite[Corollary 7.8, p. 735-736]{1111.57019}.
Since this combinatorial structure does not change if we replace \(M\) by \(\beta(M,X)\), there is an isomorphism
\begin{equation*}
  H^*(M)\rightarrow H^*(\beta(M,X))
\end{equation*}
which preserves the Poincar\'e duals of the characteristic submanifolds of \(M\) and \(\beta(M,X)\).
This shows that Theorem \ref{sec:class-six-dimens-2} does not hold for torus manifolds of dimension greater than six.  
\end{remark}

\section{Tori in the diffeomorphism group of six-dimensional torus manifolds}
\label{sec:tori_diff_six}

In this section we compute the number of conjugacy classes of three-dimensional tori in the diffeomorphism group of some six-dimensional torus manifolds.
We summarize the results of this section in the following table.

\begin{center}
  \begin{tabular}{|p{7cm}|c|}
    \(M\)& tori in \(\Diff(M)\)\\\hline\hline
    \(S^6\)& \(1\)\\\hline
    \(\C P^3\) & \(1\)\\\hline
    \(S^2\times N\), \(N\) simply connected \(4\)-dimensional torus manifold, \(N\neq k \C P^2\), \(k\geq 2\)&\(\infty\)\\\hline
    \(S^2\times k \C P^2\), \(k\geq 2\)& as in \(\Diff(k\C P^2)\)\\
  \end{tabular}
\end{center}

We should note that this table shows together with Melvin's results \cite{0486.57016} that the number of conjugacy classes of tori in the diffeomorphism group of six-dimensional torus manifolds might be one, an arbitrary large finite number or infinite.

\begin{theorem}
  Let \(M\) be a six-dimensional torus manifold which is diffeomorphic to \(\C P^3\) or \(S^6\). Then \(M\) is weakly equivariantly diffeomorphic to \(\C P^3\) or \(S^6\), respectively, with \(T^3\) acting in the standard way.
\end{theorem}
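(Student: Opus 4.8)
The plan is to apply the classification result Theorem~\ref{sec:class-six-dimens} (together with its predecessors about six-dimensional torus manifolds with vanishing odd cohomology), which reduces the problem to a purely combinatorial one: it suffices to show that any torus action on $M\cong\C P^3$ (resp.\ $M\cong S^6$) has the same face poset $\mathcal{P}$ and characteristic map $\lambda$ as the standard action, up to the obvious automorphisms. Since $\C P^3$ and $S^6$ are simply connected and have vanishing odd cohomology, Theorem~\ref{sec:class-six-dimens} tells us that the equivariant diffeomorphism type is determined by $(\mathcal{P},\lambda)$, so I only need to pin these down.

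First I would identify the orbit space. For $M\cong S^6$ the orbit space $M/T$ is, after smoothing corners, a simply connected three-dimensional homology disc, hence (by the arguments in Section~\ref{sec:class}, using the Poincar\'e conjecture) diffeomorphic to $D^3$; moreover its face poset is constrained by the cohomological data. Using the description of $H^*(M)$ in terms of the combinatorial structure of the characteristic submanifolds --- and the fact that $H^*(S^6)$ has just one generator in top degree and nothing in between --- one sees that $S^6/T$ must have exactly two facets, meeting in a single edge diffeomorphic to $S^1$, which is precisely the combinatorial type of the orbit space of the standard linear $T^3$-action on $S^6=S(\C^3\oplus\R)$. For $M\cong\C P^3$ the orbit space is a simplex $\Delta^3$: indeed $H^*(\C P^3)$ is generated in degree two with four characteristic submanifolds whose intersection pattern forces the face poset to be that of $\Delta^3$, again matching the standard action on $\C P^3=\C P(\C^4)$.

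Next I would determine $\lambda$. On each facet $\lambda$ picks out a one-dimensional subtorus, i.e.\ a primitive vector in the integral lattice of $T^3$, subject to the non-singularity condition that the vectors attached to facets meeting in a vertex form a basis (this is forced by local standardness near a fixed point). For $\C P^3$ over $\Delta^3$ the four primitive vectors $v_1,v_2,v_3,v_4$ satisfy: any three form a basis, and by changing the basis of $T^3$ one may assume $v_1,v_2,v_3$ are the standard basis vectors; then $v_4$ is determined up to sign of its coordinates, and after a further lattice automorphism one gets $v_4=-(v_1+v_2+v_3)$ or an equivalent, which is exactly the standard characteristic map of $\C P^3$. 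For $S^6$ the analysis is even simpler, since there are only two facets and the characteristic map is essentially forced once one fixes the combinatorics, giving the standard linear action. Throughout, the phrase ``weakly equivariantly diffeomorphic'' allows us to precompose with automorphisms of $T^3$, which is exactly the freedom used to normalize $\lambda$.

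The main obstacle I anticipate is the combinatorial rigidity step: showing that the face poset of the orbit space is literally $\Delta^3$ (resp.\ the ``lune'' with two facets and one edge) and that $\lambda$ is unique up to $\mathrm{GL}_3(\Z)$. This requires invoking the cohomological description of six-dimensional torus manifolds with $H^{odd}=0$ from \cite{1111.57019} (the structure of $H^*(M)$ in terms of characteristic submanifolds, Corollary~7.8) and checking that the ring structures of $H^*(\C P^3)$ and $H^*(S^6)$ admit no other realizations --- a finite but slightly delicate case check. Once that is done, Theorem~\ref{sec:class-six-dimens} immediately upgrades the combinatorial coincidence to a weakly equivariant diffeomorphism with the standard model, completing the proof.
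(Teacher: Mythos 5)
Your overall strategy coincides with the paper's: both proofs hinge on Theorem~\ref{sec:class-six-dimens}, so everything reduces to showing that \((\mathcal{P},\lambda)\) is the standard one. Where you diverge is in how you pin down \(\mathcal{P}\): the paper first identifies the characteristic submanifolds themselves (a codimension-two component of an \(S^1\)-fixed set in a cohomology \(\C P^3\) is a cohomology \(\C P^2\), hence by Orlik--Raymond diffeomorphic to \(\C P^2\)), concludes that every facet is a triangle, and then counts \(\chi(M)=4\) vertices to get the tetrahedron; you instead want to read off the face poset from the cohomology ring via \cite{1111.57019}. For \(\C P^3\) your route can be made to work: the face-ring description gives \(m=b_2+3=4\) facets, \(\chi=4\) vertices each lying on exactly three facets, and an Euler-characteristic count on \(\partial(M/T)\cong S^2\) forces \(6\) edges and hence the tetrahedron; the normalization of the four primitive vectors is as you describe. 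This is a legitimate, slightly longer alternative to the paper's argument for that case.

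The \(S^6\) case, however, contains a genuine error. You assert that \(S^6/T\) has ``exactly two facets, meeting in a single edge diffeomorphic to \(S^1\).'' Such an orbit space has no vertices, hence the action would have no fixed points, contradicting \(M^T\neq\emptyset\) (vertices of the orbit space correspond to fixed points); moreover a fixed point of a locally standard \(T^3\)-action lies on three facets, so two facets cannot suffice. The orbit space of the standard action on \(S^6=S(\C^3\oplus\R)\) actually has \emph{three} facets (the images of the three copies of \(S^4=\{z_i=0\}\cap S^6\)), each a bigon, three edges, and two vertices. In addition, your appeal to Corollary~7.8 of \cite{1111.57019} is not available here, since \(H^*(S^6)\) is not generated in degree two; you would instead need the route the paper takes (each characteristic submanifold is a simply connected four-dimensional torus manifold with \(\chi=2\), hence \(S^4\) by Orlik--Raymond, so every facet is a bigon, and \(\chi(S^6)=2\) vertices then force the three-facet configuration) or some substitute argument. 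As written, the \(S^6\) half of your proof does not go through.
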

\begin{proof}
  We give the proof only for the case that \(M\) is diffeomorphic to \(\C P^3\).
  The proof for \(S^6\) is similar.
  
  By Theorem 5.1 of \cite[p. 393]{0246.57017} and \cite{0216.20202}, all characteristic submanifolds of \(M\) are diffeomorphic to \(\C P^2\).
  Therefore all facets of \(M/T\) are triangles.
  Because \(M/T\) has \(\chi(M)=4\) vertices, it follows that \(M/T\) is a tetrahedron.
  Because up to automorphism of \(T^3\) there is only one way to define a characteristic map \(\lambda\), the statement follows from Theorem~\ref{sec:class-six-dimens}.
\end{proof}

\begin{lemma}
\label{sec:tori-diff-group-3}
  Let \(M\) and \(M'\) be simply connected four-dimensional torus manifolds such that \(\chi(M)=\chi(M')\).
  Then \(M\times S^2\) and \(M'\times S^2\) are weakly equivariantly diffeomorphic if and only if \(M\) and \(M'\) are weakly equivariantly diffeomorphic.
\end{lemma}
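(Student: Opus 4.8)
The forward implication is immediate: if \(f\colon M\to M'\) is a weakly equivariant diffeomorphism covering an automorphism \(\psi\) of the two-torus, then \(f\times\id_{S^2}\colon M\times S^2\to M'\times S^2\) is a weakly equivariant diffeomorphism covering \(\psi\times\id_{S^1}\). So the content is the converse, and the plan is to recover \(M\), up to weak equivariant diffeomorphism, from the set of characteristic submanifolds of \(M\times S^2\) together with their intrinsic torus-manifold structures (those coming from the residual torus actions on them). For orientation: \(M\times S^2\) is a simply connected six-dimensional torus manifold, and since the Orlik--Raymond classification gives \(H^{\mathrm{odd}}(M)=0\) we also have \(H^{\mathrm{odd}}(M\times S^2)=0\), so Theorem~\ref{sec:class-six-dimens} is available; however the argument below uses only the list of characteristic submanifolds.

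Write \(T^3=T^2\times S^1\) and let \(p_0,p_1\in S^2\) be the two \(S^1\)-fixed points. Going through the one-dimensional isotropy subtori of the \(T^3\)-action, one checks directly that the characteristic submanifolds of \(M\times S^2\) are exactly the products \(M_1\times S^2,\dots,M_m\times S^2\), where \(M_1,\dots,M_m\) are the characteristic submanifolds of \(M\), together with the slices \(M\times\{p_0\}\) and \(M\times\{p_1\}\). Their intrinsic torus-manifold structures are then read off: \(M\times\{p_j\}\) is weakly equivariantly diffeomorphic to \(M\); and each \(M_i\), being a closed orientable surface carrying an effective circle action with a fixed point, is \(S^2\) with the standard rotation, so \(M_i\times S^2\) is weakly equivariantly diffeomorphic to the standard product \(S^2\times S^2\). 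Hence the weak equivariant diffeomorphism types occurring among the characteristic submanifolds of \(M\times S^2\) are: the type of \(M\), with multiplicity two, together with the type of the standard \(S^2\times S^2\), with some multiplicity --- and likewise for \(M'\times S^2\) with \(M'\) in place of \(M\).

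Now let \(F\colon M\times S^2\to M'\times S^2\) be a weakly equivariant diffeomorphism covering an automorphism \(\psi\) of \(T^3\). Since being a characteristic submanifold is an invariant notion and \(\psi\) carries the principal isotropy group of a characteristic submanifold \(N\) to that of \(F(N)\), the map \(F\) sends characteristic submanifolds to characteristic submanifolds and \(F|_N\colon N\to F(N)\) is a weakly equivariant diffeomorphism of torus manifolds; so \(F\) induces a bijection between the two collections of types above. If \(M\) is \emph{not} weakly equivariantly diffeomorphic to the standard \(S^2\times S^2\), then the type of \(M\) occurs exactly twice for \(M\times S^2\), hence exactly twice for \(M'\times S^2\); as the submanifolds \(M'_i\times S^2\) all have the other type, the two of type \(M\) must be \(M'\times\{p_0\}\) and \(M'\times\{p_1\}\), so \(M'\) is weakly equivariantly diffeomorphic to \(M\). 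If instead \(M\) is weakly equivariantly diffeomorphic to the standard \(S^2\times S^2\), then every characteristic submanifold of \(M\times S^2\), and therefore of \(M'\times S^2\), has that type; in particular \(M'\times\{p_0\}\), which is weakly equivariantly diffeomorphic to \(M'\), has that type, so again \(M'\) is weakly equivariantly diffeomorphic to \(S^2\times S^2\), hence to \(M\). The step needing care, and the one I would write out in detail, is the precise determination of the characteristic submanifolds of \(M\times S^2\) and of their residual torus-manifold structures; once that is in place, the transfer along \(F\) is formal. (The hypothesis \(\chi(M)=\chi(M')\) is in fact not used in the converse: it is automatic, since \(\chi(M\times S^2)=2\chi(M)\).)
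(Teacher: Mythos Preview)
Your proof is correct, and it takes a genuinely different route from the paper's. The paper argues at the level of orbit spaces and characteristic maps: since the quotient of \(M\times S^2\) is the prism \((M/T)\times I\), a weakly equivariant diffeomorphism descends to a face-preserving diffeomorphism of prisms. When \(\chi(M)\neq 4\) the polygon \(M/T\) is not a square, so the ``top and bottom'' facets \((M/T)\times\{0,1\}\) are combinatorially distinguished from the ``side'' facets by their vertex count, and the induced diffeomorphism must carry one to the other; restricting upstairs yields a weakly equivariant \(M\to M'\). When \(\chi(M)=4\) every facet of the cube is a square and this combinatorial argument breaks down, so the paper instead runs a case analysis of the possible characteristic maps (the types a), b) and the parameter \(p\) in Figure~\ref{fig:2}) to force the conclusion.

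Your argument replaces this dichotomy by working directly with the characteristic submanifolds and their residual \(T^2\)-structures: the two slices \(M\times\{p_j\}\) carry the weak equivariant type of \(M\), while every \(M_i\times S^2\) is the standard \(S^2\times S^2\); since a weakly equivariant diffeomorphism permutes characteristic submanifolds and preserves these intrinsic types, you recover \(M'\cong M\) immediately unless \(M\) is itself the standard \(S^2\times S^2\), in which case the conclusion is trivial. This is more uniform and bypasses the separate treatment of the cube; the paper's approach, by contrast, stays entirely at the combinatorial level of \((\mathcal{P},\lambda)\) and never needs the induced torus-manifold structure on the characteristic submanifolds. Your remark that the hypothesis \(\chi(M)=\chi(M')\) is automatic for the converse direction is also correct.
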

\begin{proof}
  We first consider the case where \(\chi(M)=\chi(M')\neq 4\).
  Then \(M/T=M'/T\) is not diffeomorphic to the square \(I^2=[0,1]^2\).
  
  A weakly equivariant diffeomorphism \(h:M\times S^2\rightarrow M'\times S^2\) induces a diffeomorphism \(\bar{h}:M/T\times I\rightarrow M'/T\times I\).
  Since \(M/T\) is not diffeomorphic to \(I^2\), there are vertices \(p_1,p_2\) of \(I\) such that \(\bar{h}(M/T\times\{p_1\}) = M'/T\times \{p_2\}\).

Therefore \(h\) restricts to a weakly equivariant diffeomorphism of \(M=\pi^{-1}(M/T\times\{p_1\})\rightarrow \pi^{-1}(M'/T\times\{p_2\})=M'\).

If \(\chi(M)=4\), then \(M/T=M'/T=I^2\) and \(M\times S^2/T=M'\times S^2/T=I^3\).
The characteristic maps of \(M\) and \(M'\) are given as in figure \ref{fig:2}, see also \cite[p. 427]{0733.52006}. Here \(p\) is an integer.
Moreover, in that figure we have identified the one-dimensional torus \(\lambda(F)\), \(F\) facet of \(M/T\), with a vector in \(\R^3=LT^3\) tangent to \(\lambda(F)\).

\begin{figure}
  \centering
  \includegraphics{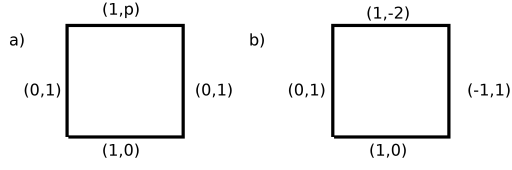}
  \caption{The orbit spaces and characteristic maps of four-dimensional simply connected torus manifolds with Euler characteristic 4.}
  \label{fig:2}
\end{figure}
Assume that there is an weakly equivariant diffeomorphism  \(h:M\times S^2\rightarrow M'\times S^2\).

If the characteristic map of \(M\) is of type b), then there is one pair of opposite facets of \(M\times S^2/T\) on which the characteristic map takes the same value.
If the characteristic map of \(M\) is of type a) and \(p\neq 0\), then there are two pairs of opposite facets of \(M\times S^2/T\) on which the characteristic map takes the same value.
If the characteristic map of \(M\) is of type a) and \(p= 0\), then there are three pairs of opposite facets of \(M\times S^2/T\) on which the characteristic map takes the same value.

Therefore we may assume that both characteristic maps of \(M\) and \(M'\) are of type a) or both characteristic maps are of type b).
In the second case \(M\) and \(M'\) are weakly \(T\)-equivariantly diffeomorphic.

Hence, we may assume that both characteristic maps are of type a).
If for one of the manifolds \(M\) and \(M'\) the value of \(p\) is zero, then this is also the case for the other manifold.
Hence, they are weakly equivariantly diffeomorphic.

So assume that the values \(p_M\) and \(p_{M'}\) of \(p\) for \(M\) and \(M'\) are both non-zero.

The diffeomorphism \(h\) induces an diffeomorphism \(\bar{h}:I^3\rightarrow I^3\) and there is an automorphism \(\psi\) of \(T^3\) such that for each facet \(F\) of \(I^3\) we have
\begin{equation*}
  \lambda_{M'}(\bar{h}(F))=\psi(\lambda_M(F)).
\end{equation*}
Here \(\lambda_{M}\) and \(\lambda_{M'}\) are the characteristic maps of \(M\) and \(M'\), respectively.

Let \(L\psi\) be the automorphism of \(LT^3=\R^3\) induced by \(\psi\). Then we have
\begin{align*}
  L\psi((0,1,0))&=\pm (0,1,0),\\
  L\psi((0,0,1))&=\pm (0,0,1),\\
  L\psi(\{\pm(1,0,0),\pm(1,p_M,0)\})&=\{\pm(1,0,0),\pm(1,p_{M'},0)\},
\end{align*}
or
\begin{align*}
  L\psi((0,1,0))&=\pm (0,0,1),\\
  L\psi((0,0,1))&=\pm (0,1,0),\\
  L\psi(\{\pm(1,0,0),\pm(1,p_M,0)\})&=\{\pm(1,0,0),\pm(1,p_{M'},0)\}.
\end{align*}
Since \(L\psi((1,p_M,0))\in\langle L\psi((1,0,0)), L\psi((0,1,0))\rangle\), we must be in the first case.
Therefore \(\bar{h}\) maps the facets \(F_1,F_2\) of \(I^3\) with \(\bar{\lambda}_M(F_1)=\bar{\lambda}_M(F_2)=(0,0,1)\) to the facets \(F_1',F_2'\) of \(I^3\) with \(\bar{\lambda}_{M'}(F_1')=\bar{\lambda}_{M'}(F_2')=(0,0,1)\).
Here \(\bar{\lambda}(F)\) indicates a vector in \(\R^3=LT\) which spans the Lie-algebra of \(\lambda(F)\).
The preimages of these facets under the orbit maps are equivariantly diffeomorphic to \(M\) and \(M'\), respectively.
Therefore \(h\) restricts to a weakly equivariant diffeomorphism of \(M\) and \(M'\).
\end{proof}

\begin{theorem}
  Let \(N\) be a simply connected four-dimensional torus manifold, \(N\neq k\C P^2\), \(k\geq 2\), then there are infinitely many conjugacy classes of three-dimensional tori in \(S^2\times N\).
\end{theorem}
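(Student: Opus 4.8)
The plan is to produce infinitely many $T^3$-manifold structures on the fixed smooth manifold $S^2\times N$ that are pairwise \emph{not} weakly equivariantly diffeomorphic; since two $3$-dimensional tori in $\Diff(S^2\times N)$ are conjugate exactly when the corresponding torus manifolds are weakly equivariantly diffeomorphic, this gives infinitely many conjugacy classes. I would split the argument according to whether the underlying smooth $4$-manifold of $N$ admits infinitely many inequivalent $T^2$-actions.

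\emph{The main case.} By the Orlik--Raymond classification of $4$-dimensional torus manifolds (and Melvin's count), every simply connected four-dimensional torus manifold whose diffeomorphism type is not $S^4$, $\mathbb{CP}^2$, $\overline{\mathbb{CP}^2}$ or $\#_k\mathbb{CP}^2$, $\#_k\overline{\mathbb{CP}^2}$ with $k\geq 2$ carries infinitely many simply connected $T^2$-torus-manifold structures $N_1,N_2,\dots$ on a single smooth $4$-manifold, no two of them weakly equivariantly diffeomorphic (for instance the Hirzebruch surfaces $F_{2j}$ when that manifold is $S^2\times S^2$, and iterated equivariant blow-up/blow-down in general). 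Since each $N_j$ is diffeomorphic to $N$, the $T^3$-manifold $S^2\times N_j$ is diffeomorphic to $S^2\times N$, hence defines a $3$-torus in $\Diff(S^2\times N)$. If the tori coming from $N_i$ and $N_j$ were conjugate, the conjugating diffeomorphism would be a weak equivariant diffeomorphism $S^2\times N_i\to S^2\times N_j$; as $\chi(N_i)=\chi(N)=\chi(N_j)$, Lemma~\ref{sec:tori-diff-group-3} would force $N_i$ and $N_j$ to be weakly equivariantly diffeomorphic, a contradiction. So there are infinitely many conjugacy classes.

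\emph{The exceptional cases $N=S^4,\ \mathbb{CP}^2,\ \overline{\mathbb{CP}^2}$.} These are not excluded by the hypothesis $N\neq k\mathbb{CP}^2$ $(k\geq 2)$ but have only finitely many $T^2$-structures, so one needs \emph{non-product} $T^3$-actions. For $\gamma\in\pi_1(T^2)=\mathbb{Z}^2$, viewed as a homomorphism $\gamma\colon S^1\to T^2$, form the $N$-bundle $E_\gamma=D^2_+\times N\cup_{\phi_\gamma}D^2_-\times N$ over $S^2$ with clutching $\phi_\gamma(t,n)=(t,\gamma(t)\cdot n)$, and let $T^3=T^1\times T^2$ act fibrewise through the $T^2$-action on $N$ while $T^1$ rotates the two discs and acts through $\gamma$ on the fibre over $D^2_-$ (which makes the action compatible with $\phi_\gamma$). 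Then $E_\gamma$ is a six-dimensional torus manifold; its orbit space $\mathcal P=(S^2/T^1)\times(N/T^2)$ is independent of $\gamma$, while its characteristic map agrees with that of $S^2\times N$ on all facets except one of the two ``end'' facets $\{p_\pm\}\times N$, whose characteristic vector is shifted by $[\gamma]$. As the $T^2$-action on $N$ is linear, the underlying smooth bundle of $E_\gamma$ is classified by the image of $[\gamma]$ in $\pi_1(\Diff N)$, which factors through $\pi_1$ of a compact Lie group ($SO(5)$ for $S^4$, $PU(3)$ for $\mathbb{CP}^2$), a finite group; hence infinitely many $\gamma$ lie in the kernel and for those $E_\gamma$ is diffeomorphic to $S^2\times N$. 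For such $\gamma$, since $S^2\times N$ is simply connected with vanishing odd cohomology, Theorem~\ref{sec:class-six-dimens} shows $E_\gamma$ and $E_{\gamma'}$ are weakly equivariantly diffeomorphic only if $[\gamma']=\pm A[\gamma]$ for $A$ in the finite automorphism group of the torus manifold $N$, so infinitely many of the $E_\gamma$ represent distinct conjugacy classes.

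\emph{Main obstacle.} In the main case everything reduces at once to the already-established Lemma~\ref{sec:tori-diff-group-3}. The real work is in the exceptional cases $N=S^4,\ \mathbb{CP}^2,\ \overline{\mathbb{CP}^2}$: constructing the twisted bundles $E_\gamma$ as torus manifolds, recognising when their smooth type is $S^2\times N$ (this rests on the finiteness of $\pi_1$ of the relevant compact automorphism group), and separating them up to equivariant diffeomorphism, for which Theorem~\ref{sec:class-six-dimens} together with a bookkeeping of the characteristic maps over $\mathcal P$ is needed.
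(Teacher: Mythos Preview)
Your proposal is correct and follows the paper's strategy closely. The main case (where the underlying $4$-manifold admits infinitely many inequivalent $T^2$-structures) is handled identically, by feeding those structures into Lemma~\ref{sec:tori-diff-group-3}.

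For the exceptional cases $N=S^4$ and $N=\C P^2$, both arguments build non-product $T^3$-actions on the trivial $N$-bundle over $S^2$, but the presentations differ. The paper works with the explicit bundle isomorphism $\C P^1\times\C^2\cong\gamma^{\otimes a}\oplus\bar\gamma^{\otimes a}$ (a one-parameter family, manifestly trivial since the first Chern class vanishes), passes to the associated sphere or projective bundle, and distinguishes the resulting actions by a direct computation with the characteristic vectors $(0,0,1)$ and $(-a,a,1)$. Your version is the natural generalization: you use an arbitrary loop $\gamma\in\pi_1(T^2)\cong\mathbb{Z}^2$ as a clutching function, observe that smooth triviality is controlled by the image in $\pi_1(\Diff N)$, and exploit that the action factors through a compact Lie group with finite $\pi_1$ to get infinitely many admissible $\gamma$. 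The paper's one-parameter family sits inside yours as the anti-diagonal $\gamma=(a,-a)$. Your route is a bit more conceptual and yields a larger family; the paper's is more explicit and makes the separation step (showing different parameters give non-equivalent actions) a concrete matrix calculation rather than the finiteness argument you sketch. Both buy the same conclusion, and your separation claim ``$[\gamma']=\pm A[\gamma]$ for $A$ in a finite group'' is correct in spirit, though in a full write-up you would want to carry out the bookkeeping with the face-poset automorphisms of $I\times(N/T^2)$ and the induced constraints on $\psi\in\mathrm{Aut}(T^3)$, as the paper does for its specific family.
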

\begin{proof}
  Two three-dimensional tori in the diffeomorphism group of \(S^2\times N\) are conjugated if and only if they give rise to weakly equivariantly diffeomorphic torus actions on \(S^2\times N\).
  An analogous statement holds for two-dimensional tori in the diffeomorphism group of \(N\).
  Therefore, if \(N\neq S^4,\C P^2\), the theorem follows from Lemma~\ref{sec:tori-diff-group-3} and the fact that there are infinitely many conjugacy classes of tori in the diffeomorphism group of \(N\) \cite{0486.57016}.

Now we consider the cases \(N=S^4\) and \(N=\C P^2\).
Let \(\gamma\) be the canonical line bundle over \(\C P^1=S^2\) and \(\bar{\gamma}\) its dual.
Then there is  exactly one lift of the \(S^1\)-action on \(\C P^1\) to \(\gamma\) such that \(\gamma_{(1:0)}\) is the trivial \(S^1\)-represenation.
Then the weight of the \(S^1\)-representation \(\gamma_{(0:1)}\) is one.

Through the bundle isomorphism
\begin{equation*}
  \C P^1\times \C^2\cong \otimes_{i=1}^a \gamma \oplus \otimes_{i=1}^a \bar{\gamma},
\end{equation*}
\(a\geq 0\), there is given a \(S^1_1\times S^1_2\times S^1\)-action on \(\C P^1\times \C^2\), where \(S^1_1\) acts by multiplication on \(\gamma\) and \(S^1_2\) acts by multiplication on \(\bar{\gamma}\).

This action induces a torus action on
\begin{equation*}
  \C P^1 \times S^4 = S(\otimes_{i=1}^a \gamma \oplus \otimes_{i=1}^a \bar{\gamma})
\end{equation*}
and
\begin{equation*}
  \C P^1\times \C P^2= P(\otimes_{i=1}^a \gamma \oplus \otimes_{i=1}^a \bar{\gamma}\oplus \C).
\end{equation*}

\begin{figure}
  \centering
  \includegraphics{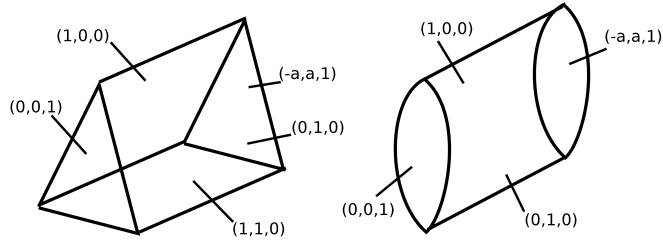}
  \caption{The orbit spaces and characteristic maps of $P(\otimes_{i=1}^a \gamma \oplus \otimes_{i=1}^a \bar{\gamma}\oplus \C)$ and $S(\otimes_{i=1}^a \gamma \oplus \otimes_{i=1}^a \bar{\gamma})$.}
  \label{fig:1}
\end{figure}
The orbit spaces and the characteristic maps of these actions are shown in figure \ref{fig:1}.
In that figure we have identified the one-dimensional torus \(\lambda(F)\), \(F\) facet of $P(\otimes_{i=1}^a \gamma \oplus \otimes_{i=1}^a \bar{\gamma}\oplus \C)/T$ or $S(\otimes_{i=1}^a \gamma \oplus \otimes_{i=1}^a \bar{\gamma})/T$, with a vector in \(\R^3=LT^3\) tangent to \(\lambda(F)\).

By considering the characteristic maps for these actions, one finds that two actions defined in this way by different \(a\)'s are not weakly equivariantly diffeomorphic.

We give here the details of the argument for \(\C P^1\times S^4\).
The argument for \(\C P^1\times \C P^2\) is similar.

Assume that \(M_a=S(\otimes_{i=1}^a \gamma \oplus \otimes_{i=1}^a \bar{\gamma})\) and \(M_b=S(\otimes_{i=1}^b \gamma \oplus \otimes_{i=1}^b \bar{\gamma})\) are weakly equivariantly diffeomorphic.
Then there are a diffeomorphism \(f:M_a/T\rightarrow M_b/T\) and a automorphism \(\psi\) of \(T\) such that for each facet \(F\) of \(M_a/T\) we have \(\psi(\lambda_a(F))=\lambda_b(f(F))\).
Here \(\lambda_a\) and \(\lambda_b\) denote the characteristic maps of \(M_a\) and \(M_b\), respectively.
Since, for each facet \(F\) of \(M_a\), \(f(F)\) and \(F\) have the same number of vertices, we see from figure \ref{fig:1} that \(\psi\) induces an automorphism \(L\psi\) of the Lie algebra \(LT=\R^3\) of \(T\) such that
\begin{align*}
  L\psi(\{\pm (0,0,1), \pm (-a,a,1)\})&=\{\pm (0,0,1), \pm (-b,b,1)\},\\
  L\psi(\{\pm (1,0,0), \pm (0,1,0)\})&=\{\pm (1,0,0), \pm (0,1,0)\}.
\end{align*}

From these two equations we see that
\begin{align*}
  (\pm a, \pm a,0)&= -a \cdot L\psi((1,0,0)) + a\cdot L\psi((0,1,0))\\ 
  &= L\psi((-a,a,1))-L\psi((0,0,1))=(\pm b, \pm b, c)
\end{align*}
with \(c\in \{0,2\}\).
Since \(a,b\geq 0\), it follows that \(a=b\) and \(c=0\).
Therefore the statement follows.
\end{proof}

\begin{theorem}
  Let \(M=S^2\times k\C P^2\), \(k\geq 2\). Then every \(T^3\)-action on \(M\) is conjugated in \(\Diff(M)\) to a product action of an action of \(T^2\) on \(k\C P^2\) and an action of \(S^1\) on \(S^2\).
\end{theorem}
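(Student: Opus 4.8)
The plan is to determine, for an arbitrary $T^3$-action on $M=S^2\times k\C P^2$, the orbit poset $\mathcal P$ of $M/T$ and the characteristic map $\lambda$, to show that these are of product type, and then to apply the results of this section. I begin with some reductions. Since $M$ is simply connected with $H^{\mathrm{odd}}(M)=0$, any $T^3$-action makes $M$ a torus manifold of the kind considered in Theorem~\ref{sec:class-six-dimens}: it is locally standard, $M/T$ is a nice manifold with corners with acyclic faces (\cite[Theorem~4.1]{1111.57019}), and (using Lemma~\ref{sec:fund-groups-torus-6}) $M/T$ is a simply connected $3$-disc, so Lemma~\ref{sec:six-dimens-torus-1} applies. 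Moreover, exactly as at the start of the proof of the preceding theorem, two $T^3$-actions on $M$ are conjugate in $\Diff(M)$ if and only if they are weakly equivariantly diffeomorphic. Finally, since $H^*(M)=H^*(S^2)\otimes H^*(k\C P^2)$ is generated in degree two, the presentation of \cite[Corollary~7.8]{1111.57019} applies to every such action; hence the number of characteristic submanifolds equals $b_2(M)+3=k+4$ and the number of $T$-fixed points equals $\chi(M)=2k+4$, so $M/T$ always has $k+4$ facets and $2k+4$ vertices.

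The geometric input is the ring structure of $H^*(M)$. Write $y$ for a generator of $H^2(S^2)$ pulled back to $M$ and $x_1,\dots,x_k$ for a basis of $H^2(k\C P^2)$, so that $y^2=0$, $x_ix_j=\delta_{ij}z$ and $x_iz=0$, where $z$ generates $H^4(k\C P^2)$; then for $w=by+\sum_i a_ix_i$ one computes $w^3=3b\bigl(\sum_i a_i^2\bigr)\,yz$. Two facts follow, each invariant under every ring automorphism of $H^*(M)$: the cubic form factors over $\Q$ as a linear form times the quadratic form $\sum_i a_i^2$, which for $k\ge 2$ is anisotropic and hence irreducible, so the linear factor and with it its kernel $\langle x_1,\dots,x_k\rangle$ are canonical up to scalars; and $\pm y$ is the \emph{only} primitive class of $H^2(M)$ of square zero. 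In particular every ring automorphism of $H^*(M)$ sends $y$ to $\pm y$.

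The heart of the argument is to match $\mathcal P$ and $\lambda$ with those of a product. The key point is to show that, after a ring automorphism of $H^*(M)$ and sign changes, two of the Poincar\'e duals $u_i\in H^2(M)$ of the characteristic submanifolds equal $\pm y$ and the remaining $k+2$ lie in $\langle x_1,\dots,x_k\rangle$. Once this is known, the two facets carrying the classes $\pm y$ are disjoint, for if two characteristic submanifolds meet then the product of their Poincar\'e duals is nonzero (part of \cite[Corollary~7.8]{1111.57019}, as $H^*(M)$ is generated in degree two), whereas here it is $y^2=0$. A combinatorial argument with the facet orderings supplied by Lemma~\ref{sec:six-dimens-torus-1} (as in Section~\ref{sec:app-intro}) then shows that $M/T$ is a prism $Q\times[0,1]$ over a $(k+2)$-gon $Q$ carrying the product characteristic map. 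By Theorem~\ref{sec:class-six-dimens} the action is therefore equivariantly diffeomorphic to the product of the standard $S^1$-action on $S^2$ with the $T^2$-action on the four-dimensional torus manifold $N=M(Q,\lambda|_Q)$; since $H^*(N)\cong H^*(k\C P^2)$, the classification of simply connected four-dimensional torus manifolds \cite{0287.57017} forces $N\cong k\C P^2$, which is the assertion. (Alternatively, once the configuration $\{u_i\}$ is matched with that of the standard product action, one may conclude directly with Theorem~\ref{sec:class-six-dimens-2}.)

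I expect the main obstacle to be the key point above. A priori the classes $u_1,\dots,u_{k+4}$ form merely a primitive generating set of $H^2(M)$ subject to the three linear relations coming from $\lambda$ — whose characteristic vectors form a basis of the lattice at each vertex — and the Stanley--Reisner relations coming from $\mathcal P$, and one must use the particular ring structure of $H^*(S^2\times k\C P^2)$, in particular the uniqueness of the square-zero class $y$ and the factorization of the cubic form, to rule out every configuration except the standard one. The ensuing identification of $M/T$ with a prism, although it uses only the tools developed in Sections~\ref{sec:app-intro} and~\ref{sec:class}, still requires some bookkeeping of the face structure.
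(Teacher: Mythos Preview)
Your overall strategy is correct and in fact is the same as the paper's: reduce to the presentation of \cite[Corollary~7.8]{1111.57019}, pin down the configuration of Poincar\'e duals of the characteristic submanifolds inside $H^2(M)$, and then invoke Theorem~\ref{sec:class-six-dimens-2}. But you yourself flag the gap: you have not proved the ``key point'' that exactly two of the $u_i$ are $\pm y$ and the remaining $k+2$ lie in $\langle x_1,\dots,x_k\rangle$. The tools you set up (the factorisation of the cubic form, the uniqueness of the primitive square-zero class) are suggestive but do not by themselves force the $u_i$ into this configuration; a priori several $u_i$ could have nonzero $y$-component without being equal to $\pm y$, and nothing you have written rules this out.

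The paper fills this gap by a different and more direct method. Rather than working with the cubic form, it analyses for each characteristic submanifold $M_i$ the kernel of the surjection $H^*(M)\to H^*(M_i)$. A short case analysis on an element $\beta y+\sum\alpha_j x_j$ of this kernel shows that either $\beta=0$ or all $\alpha_j=0$, and computes the rank of the kernel in each case; this yields exactly three possible types of facets $F_i$, having $k+3$, $k+2$, or $4$ vertices, with $PD(M_i)$ equal to $\pm y$ in the second case and lying in $\bigoplus\mathbb{Z}x_i$ in the third. Writing $d_{k+3},d_{k+2},d_4$ for the numbers of facets of each type, the total counts of facets and vertices give the linear system
\[
d_4+d_{k+2}+d_{k+3}=k+4,\qquad 4d_4+(k+2)d_{k+2}+(k+3)d_{k+3}=3(2k+4),
\]
whose only solution in nonnegative integers compatible with the intersection constraints (two facets of the second type cannot meet, three of the third type cannot meet) is $(d_4,d_{k+2},d_{k+3})=(k+2,2,0)$. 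This is exactly your ``key point''. From there the paper concludes immediately via Theorem~\ref{sec:class-six-dimens-2} together with the Orlik--Raymond classification, without the intermediate identification of $M/T$ as a prism that you propose; your route through Theorem~\ref{sec:class-six-dimens} would also work, but is a detour.
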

\begin{proof}
  We have
  \begin{equation*}
    H^*(M)=\mathbb{Z}[x_1,\dots,x_k,y]/(x_ix_j=0 \text{ if } i\neq j, x_i^2-x_j^2=0,y^2=0).
  \end{equation*}
Since the cohomology of \(M\) is generated in degree two, it follows from the results of \cite{1111.57019} that all intersections of characteristic manifolds of \(M\) are connected.
Moreover, the Poincar\'e duals of such intersections vanish if and only if the intersection is empty. 

Because the cohomology ring of \(M\) determines the \(f\)-vector of \(M/T\) for every \(T\)-action on \(M\) \cite[p. 736-737]{1111.57019}, \(M/T\) has \(k+4\) facets and \(2k+4\) vertices.
Let \(F_1,\dots,F_{k+4}\) be the facets of \(M/T\) and \(M_1,\dots,M_{k+4}\) the corresponding characteristic submanifolds of \(M\).
Because \(H^*(M)\rightarrow H^*(M_i)\) is surjective \cite[Lemma 2.3, p. 716]{1111.57019}, \(F_i\) has at most \(k+3\) vertices.

We first consider the case, where \(F_i\) has less than \(k+3\) vertices.
Then \(H^2(M)\rightarrow H^2(M_i)\) has non-trivial kernel.
Let \(\beta y + \sum_{i=1}^k \alpha_i x_i\) be an element of this kernel.

At first assume that \(\beta\neq 0\) and \((\alpha_1,\dots,\alpha_k)\neq (0,\dots,0)\).
Then
\begin{equation*}
  -\sum_{i=1}^k \alpha_i^2 x_i^2 = (\beta y + \sum_{i=1}^k \alpha_i x_i)(\beta y - \sum_{i=1}^k \alpha_i x_i) 
\end{equation*}
is an element of the kernel of \(H^4(M)\rightarrow H^4(M_i)\).
Therefore \(x_i^2\) is an element of this kernel.
This implies that
\begin{equation*}
  \beta y x_i = (\beta y + \sum_{j=1}^k \alpha_j x_j)x_i -\alpha_i x_i^2
\end{equation*}
is also an element of this kernel.
Therefore the map \(H^4(M)\rightarrow H^4(M_i)\) is trivial.
But this is impossible because \(H^*(M)\rightarrow H^*(M_i)\) is surjective and \(H^4(M_i)=\mathbb{Z}\).

Therefore we must have \(\beta=0\) or \((\alpha_1,\dots,\alpha_k)=(0,\dots,0)\).

At first assume that \(\beta\neq 0\) and \((\alpha_1,\dots,\alpha_k)=(0,\dots,0)\).
Because \(H^4(M)\rightarrow H^4(M_i)=\mathbb{Z}\) is surjective, we have
\begin{align*}
  \kernel (H^4(M)\rightarrow H^4(M_i)) &=  \bigoplus_{i=1}^k \mathbb{Z} yx_i,\\
  \kernel (H^2(M)\rightarrow H^2(M_i)) &= \mathbb{Z} y.
\end{align*}

Therefore \(F_i\) has \(k+2\) vertices.
Moreover the Poincar\'e-dual of \(M_i\) is given by \(\pm y\).
Therefore the intersection of two such facets is empty.

Now assume that \(\beta=0\) and \((\alpha_1,\dots,\alpha_k)\neq(0,\dots,0)\).
Then \(x_i^2\) is an element of the kernel of \(H^4(M)\rightarrow H^4(M_i)\).
Let \(\sum_{i=1}^k \gamma_i x_i \in H^2(M)\) such that its restriction to \(H^2(M_i)\) is non-trivial.
Then there are \(\delta, \epsilon_i \in \mathbb{Z}\) such that
\begin{equation*}
  (\delta y+\sum_{i=1}^k \epsilon_i x_i)(\sum_{i=1}^k\gamma_i x_i)[M_i]\neq 0.
\end{equation*}
Therefore \(\delta\neq 0\) and \(y\sum_{i=1}^k\gamma_i x_i\) is not contained in the kernel of \(H^4(M)\rightarrow H^4(M_i)\).
This implies
\begin{align*}
  \kernel (H^4(M)\rightarrow H^4(M_i)) &=  \mathbb{Z}x_i^2\oplus y \kernel (H^2(M)\rightarrow H^2(M_i)),\\
  \rank \kernel (H^2(M)\rightarrow H^2(M_i)) &= k-1.
\end{align*}
Therefore \(F_i\) has four vertices and the Poincar\'e dual of \(M_i\) is contained in \(\bigoplus_{i=1}^k \mathbb{Z}x_i\).
Therefore the intersection of three such facets is empty.

Therefore \(M/T\) has the following three types of facets:
\begin{enumerate}
\item\label{item:1} facets \(F_i\) with \(k+3\) vertices. Denote the number of facets of this type by \(d_{k+3}\).
\item \label{item:2} facets \(F_i\) with \(k+2\) vertices such that the Poincar\'e-dual of \(M_i\) is given by \(\pm y\). 
Denote the number of facets of this type by \(d_{k+2}\).
\item \label{item:3} facets \(F_i\) with \(4\) vertices such that the Poincar\'e-dual of \(M_i\) is contained in \(\bigoplus_{i=1}^k \mathbb{Z}x_i\).
Denote the number of facets of this type by \(d_4\).
\end{enumerate}

Because \(M/T\) has \(2k+4\) vertices and \(k+4\) facets it follows that
\begin{equation*}
  \left(
    \begin{matrix}
      6k+12\\
      k+4
    \end{matrix}
\right)=
 \left(
    \begin{matrix}
      4&k+2&k+3\\
      1&1&1
    \end{matrix}
\right)
 \left(
    \begin{matrix}
      d_4\\
      d_{k+2}\\
      d_{k+3}
    \end{matrix}
\right).
\end{equation*}
If \(k>3\), then the only solution in \(\N^3\) of this equation is given by
\begin{equation*}
   (d_4,d_{k+2},d_{k+3})=(k+2,2,0).
\end{equation*}

If \(k=3\), then this equation has the following two solutions
\begin{equation*}
   (d_4,d_{k+2},d_{k+3})=(k+2,2,0)
\end{equation*}
 and
 \begin{equation*}
    (d_4,d_{k+2},d_{k+3})=(k+3,0,1).
 \end{equation*}
By Theorem 8.3 of \cite[p. 738]{1111.57019}, each vertex of \(M/T\) is the intersection of three facets of \(M/T\).
Therefore, in the second case, there must be vertices of \(M/T\) which are the intersection of three facets of type \ref{item:3}.
But this is impossible.
Therefore this case does not occur.

If \(k=2\), then we have \(6=d_4+d_{k+2}\) and \(d_{k+3}=0\).
Therefore all facets of \(M/T\) have four vertices.
Hence, \(M/T\) is a cube.

Because the intersection of two facets of type \ref{item:2} and the intersection of three facets of type \ref{item:3} are empty, it follows that \(d_{k+2}=2\) and \(d_{4}=4=k+2\).

Therefore the structure of the Poincar\'e-duals of the characteristic submanifolds of \(M\) is the same as the structure of Poincar\'e-duals which is obtained by a product action.
This product action may be constructed from the \(T\)-action on \(M\) as follows.

Let \(M_1\) be the preimage of a facet of type \ref{item:2}.
Then \(M_1\) is naturally a four-dimensional torus manifold with \(b_1(M_1)=0\).
By the classification of four-dimensional torus manifolds given by Orlik and Raymond \cite{0216.20202}, \(M_1\) is diffeomorphic to \(k\C P^2\).
Therefore the product \(M_1\times S^2\) is diffeomorphic to \(M\) and Theorem~\ref{sec:class-six-dimens-2} implies that the \(T\)-action on \(M\) is conjugated to the product action on \(M_1\times S^2\).
\end{proof}

The previous theorem together with Lemma~\ref{sec:tori-diff-group-3} implies that the number of conjugacy classes of three-dimensional tori in \(\Diff(M)\), \(M=S^2\times k\C P^2\), \(k\geq 2\), is equal to the number of conjugacy classes of two dimensional tori in \(\Diff(k\C P^2)\).

\bibliography{exotic}{}

\providecommand{\bysame}{\leavevmode\hbox to3em{\hrulefill}\thinspace}
\providecommand{\MR}{\relax\ifhmode\unskip\space\fi MR }
% \MRhref is called by the amsart/book/proc definition of \MR.
\providecommand{\MRhref}[2]{%
  \href{http://www.ams.org/mathscinet-getitem?mr=#1}{#2}
}
\providecommand{\href}[2]{#2}
\begin{thebibliography}{10}

\bibitem{0246.57017}
G.~E. Bredon, \emph{{Introduction to compact transformation groups.}}, {Pure
  and Applied Mathematics, 46. New York-London: Academic Press. XIII}, 1972
  (English).

\bibitem{choi_masuda_suh_pre}
S.~Choi, M.~Masuda, and D.~Y. Suh, \emph{Rigidity problems in toric toplogy, a
  survey}, to appear in Proc. Steklov Inst. Math. (2011).

\bibitem{1212.57009}
D.~J. Crowley and P.~D. Zvengrowski, \emph{{On the non-invariance of span and
  immersion co-dimension for manifolds.}}, Arch. Math., Brno \textbf{44}
  (2008), no.~5, 353--365 (English).

\bibitem{0403.57002}
M.~Davis, \emph{{Smooth G-manifolds as collections of fiber bundles.}}, Pac. J.
  Math. \textbf{77} (1978), 315--363 (English).

\bibitem{0733.52006}
M.~W. Davis and T.~Januszkiewicz, \emph{{Convex polytopes, Coxeter orbifolds
  and torus actions.}}, Duke Math. J. \textbf{62} (1991), no.~2, 417--451
  (English).

\bibitem{0214.50303}
R.D. Edwards and R.C. Kirby, \emph{{Deformations of spaces of imbeddings.}},
  Ann. Math. \textbf{93} (1971), 63--88 (English).

\bibitem{0148.43103}
D.B.A. Epstein, \emph{{The degree of a map.}}, Proc. Lond. Math. Soc., III.
  Ser. \textbf{16} (1966), 369--383 (English).

\bibitem{0139.16903}
W.~Hsiang, \emph{{On the classification of differentiable SO$(n)$ actions on
  simply connected $\pi$-manifolds.}}, Am. J. Math. \textbf{88} (1966),
  137--153 (English).

\bibitem{0153.53703}
K.~J\"{a}nich, \emph{{Differenzierbare Mannigfaltigkeiten mit Rand als
  Orbitr\"aume differenzierbarer G-Mannigfaltigkeiten ohne Rand.}}, Topology
  \textbf{5} (1966), 301--320 (German).

\bibitem{0264.57004}
F.E.A. Johnson, \emph{{Lefschetz duality and topological tubular
  neighbourhoods.}}, Trans. Am. Math. Soc. \textbf{172} (1972), 95--110
  (English).

\bibitem{0176.22004}
R.C. Kirby, \emph{{Stable homeomorphisms and the annulus conjecture.}}, Ann.
  Math. \textbf{89} (1969), 575--582 (English).

\bibitem{0767.57001}
A.~A. Kosinski, \emph{{Differential manifolds.}}, {Pure and Applied
  Mathematics, 138. Boston, MA: Academic Press. xvi, 248 p. }, 1993 (English).

\bibitem{1111.57019}
M.~Masuda and T.~Panov, \emph{{On the cohomology of torus manifolds.}}, Osaka
  J. Math. \textbf{43} (2006), no.~3, 711--746 (English).

\bibitem{1160.57032}
M.~Masuda and D.~Y. Suh, \emph{{Classification problems of toric manifolds via
  topology.}}, {Harada, Megumi (ed.) et al., Toric topology. International
  conference, Osaka, Japan, May 28--June 3, 2006. Providence, RI: American
  Mathematical Society (AMS). Contemporary Mathematics 460, 273-286 (2008).},
  2008.

\bibitem{0486.57016}
P.~Melvin, \emph{{Tori in the diffeomorphism groups of simply-connected
  4-manifolds.}}, Math. Proc. Camb. Philos. Soc. \textbf{91} (1982), 305--314
  (English).

\bibitem{0216.20202}
P.~Orlik and F.~Raymond, \emph{{Actions of the torus on 4-manifolds. I.}},
  Trans. Am. Math. Soc. \textbf{152} (1970), 531--559 (English).

\bibitem{0287.57017}
\bysame, \emph{{Actions of the torus on 4-manifolds. II.}}, Topology
  \textbf{13} (1974), 89--112 (English).

\bibitem{0449.57009}
G.~W. Schwarz, \emph{{Lifting smooth homotopies of orbit spaces.}}, Publ.
  Math., Inst. Hautes \'Etud. Sci. \textbf{51} (1980), 37--135 (English).

\bibitem{0311.57001}
I.A. Volodin, V.E. Kuznetsov, and A.T. Fomenko, \emph{{The problem of
  discriminating algorithmically the standard three- dimensional sphere.}},
  Russ. Math. Surv. \textbf{29} (1974), no.~5, 71--172 (English).

\bibitem{wiemeler-remarks}
M.~Wiemeler, \emph{Remarks on the classification of quasitoric manifolds up to
  equivariant homeomorphism}, Arch. Math. \textbf{98} (2012), no.~1, 71--85.

\bibitem{0823.52002}
G.~M. Ziegler, \emph{{Lectures on polytopes.}}, {Berlin: Springer-Verlag}, 1995
  (English).

\end{thebibliography}
\bibliographystyle{amsplain}
\end{document}